\def\O{\Omega}
\newcommand{\jump}[1]{\llbracket #1 \rrbracket}
\newcommand{\mean}[1]{\{\!\!\{#1\}\!\!\}}
\newcommand\bu{\boldsymbol{u}}
\newcommand\bv{\boldsymbol{v}}
\newcommand\bx{\boldsymbol{x}}
\newcommand\bn{\boldsymbol{n}}
\newcommand\bw{\boldsymbol{w}}
\newcommand\bL{\mathbf{L}}
\newcommand\bM{\mathbf{M}}
\newcommand\bU{\boldsymbol{U}}
\newcommand\CE{\mathcal{E}}
\def\CT{{\mathcal T}}
\newcommand{\bt}{\boldsymbol{t}}
\newcommand\bsig{\boldsymbol{\sigma}}
\newcommand\btau{\boldsymbol{\tau}}
\newcommand\R{\mathbb{R}}
\renewcommand\H{\mathrm{H}}
\renewcommand\L{\mathrm{L}}
\newcommand\bH{\boldsymbol{\mathrm{H}}}
\newcommand\bI{\boldsymbol{\mathrm{I}}}
\newcommand\bX{\boldsymbol{\mathrm{X}}}
\newcommand\bzero{\boldsymbol{0}}
\renewcommand\O{\Omega}
\newcommand\DO{\partial\O}
\newcommand\Gm{\Gamma_{\mathrm{m}}}
\newcommand\Gi{\Gamma_{\mathrm{in}}}
\newcommand\Go{\Gamma_{\mathrm{out}}}
\newcommand\Gw{\Gamma_{\mathrm{wall}}}
\renewcommand\div{\mathop{\mathrm{div}}\nolimits}
\newcommand\HsO{\H^s(\O)}
\newcommand\buin{\boldsymbol{u}_{\mathrm{in}}}
\newcommand{\vertiii}[1]{{\left\vert\kern-0.25ex\left\vert\kern-0.25ex\left\vert #1 
		\right\vert\kern-0.25ex\right\vert\kern-0.25ex\right\vert}}
\crefname{hypothesis}{Hypothesis}{Hypotheses}
\title{Divergence-conforming finite element methods for flow-transport coupling with osmotic effects\thanks{Updated: \today.
		\funding{A. Khan was supported by the  Sponsored Research \& Industrial Consultancy (SRIC), Indian Institute of Technology Roorkee, India through the faculty initiation grant MTD/FIG/100878, and  by SERB Core research grant CRG/2021/002569. D.  Mora was  supported by the National Agency for Research and Development, ANID-Chile through FONDECYT project 1220881, project \textsc{Anillo of Computational Mathematics for Desalination Processes} ACT210087, and by  Centro de Modelamiento Matemático (CMM), FB210005, BASAL funds for centers of excellence. R. Ruiz-Baier was  supported by the Australian Research Council through the \textsc{Future Fellowship} grant FT220100496 and \textsc{Discovery Project} grant DP22010316. J. Vellojin was  supported by the National Agency for Research and Development, ANID-Chile through FONDECYT Postdoctorado project 3230302. 
}}}
\author{Arbaz Khan\thanks{Department of Mathematics, Indian Institute of Technology Roorkee, Roorkee 247667, India. \email{arbaz@ma.iitr.ac.in}.}
	\and David Mora\thanks{GIMNAP-Departamento de Matem\'atica, Universidad del B\'io-B\'io, Casilla 5-C, Concepci\'on, Chile, and
		and C$\textrm{I}^2$MA, Universidad de Concepci\'on, Chile. \email{dmora@ubiobio.cl}.}
	\and Ricardo Ruiz-Baier\thanks{School of Mathematics,
		Monash University, 9 Rainforest Walk, Melbourne, Victoria 3800, Australia; and Universidad Adventista de Chile, Casilla 7--D Chillan, Chile. \email{ricardo.ruizbaier@monash.edu}.}
	\and Jesus Vellojin\thanks{Corresponding author. Departamento de Ciencias, Universidad Técnica Federico Santa María,Valparaíso, Chile. \email{jesus.vellojinm@usm.cl}.}}
\definecolor{lightblue}{rgb}{0.22,0.45,0.70}
\newcommand{\triplenorm}[1]{\lvert\!\lvert\!\lvert #1
  \rvert\!\rvert\!\rvert}
\numberwithin{figure}{section}
\numberwithin{table}{section}
\begin{document}
	
	\maketitle
	
	\begin{abstract}
		We propose a new model for the coupling of flow and transport equations with porous membrane-type conditions on part of the boundary. The governing equations consist of the Navier--Stokes equations coupled with an advection-diffusion equation, and we employ a Lagrange multiplier to handle the coupling between penetration velocity and transport on the membrane, while mixed boundary conditions are considered in the remainder of the boundary. We show existence and uniqueness of the continuous problem using a fixed-point argument. Next, an H(div)-conforming finite element formulation is proposed, and we address its a priori error analysis. The method uses an upwind approach that provides stability in the convection-dominated regime. We showcase a set of numerical examples validating the theory and illustrating the use of the new methods in the simulation of reverse osmosis processes. 
	\end{abstract}
	\begin{keywords}
		Flow-transport equations; Lagrange multipliers; Reverse osmosis; Divergence-conforming FEM.
	\end{keywords}
	\begin{AMS}
		65N30, 76D07, 76D05.
	\end{AMS}
	
	\section{Introduction}
The coupling of Navier--Stokes and advection-diffusion equations is central in many applications in industry and engineering. One of such instances is the simulation of filtration occurring in water purification, where high velocity flow  with relatively high concentration of salt goes through a unit under reverse osmosis. Even without the mechanisms of ion transport and molecule interactions through the membrane, the fluid dynamics process already poses interesting challenges. In this simplified context, we can consider  water and salt transport coupled through both advection and  boundary interaction at the membrane -- stating that velocity is proportional to 
	the salt concentration. A similar model has been studied numerically in \cite{carro2022finite}, where the coupling on the membrane follows Nitsche's approach. Here we rewrite that model using a membrane Lagrange multiplier and provide a complete well-posedness analysis as well as the analysis of a divergence-conforming discretisation. 
	
Separating for a moment the salt transport from the incompressible flow equations, we end up with a generalisation of the Navier--Stokes equations with slip boundary conditions which have been studied extensively starting from the conforming finite element methods (FEMs) proposed in \cite{verfurth86,verfurth91}. The off-diagonal bilinear form is different from the usual one in that it also includes a pairing between the normal trace of velocity and the Lagrange multiplier taking into account the membrane coupling. Another difficulty in the model considered herein is that a non-homogeneous boundary condition is needed at the inlet. Often the analysis is simply restricted to the case of homogeneous essential boundary conditions, but here the non-homogeneity is important as it permits that the coupling occurs (otherwise the membrane coupling vanishes, and the only solution is the trivial one). 
	We note that for smooth domains it is expected that discretisations are prone to the so-called Babu\v{s}ka paradox -- a variational crime associated with the approximation of the boundary, and where a sub-optimal convergence is expected (see more details in, e.g., \cite{kashiwabara2016penalty,urquiza2014weak}). Similar works focusing on slip boundary conditions imposed with Lagrange multipliers or with penalty can be found in \cite{liakos2001discretization,layton1999weak,zhou2016penalty}. In our case we restrict to polygonal boundaries, which are typically  encountered in the driving application of water desalination. 
	
We also stress that the coupling with salt transport adds complexity to the model. The unique solvability of the coupled problem is analysed by casting it as a fixed-point equation and using Banach's fixed-point theorem. The fixed-point operator consists of the solution operator of the Navier--Stokes equations with membrane (or mixed slip-type) boundary conditions, composed with the solution operator associated with an advection-diffusion equation. The unique solvability of the first sub-problem is established   using a Stokes linearisation combined with the fixed-point theory in the case of non-homogeneous boundary conditions. The unique solvability of the outer fixed-point problem results as a consequence of an assumption of smallness of data, which in our context reduces to impose a condition on the inlet velocity and on the  constitutive equation for the membrane interaction term. Some parts of these proofs are standard but as we have not found them in the precise context we need, they have been included in the paper for sake of completeness. 

Related works where non-homogeneous mixed boundary conditions are of importance include, for instance, the solvability of Navier--Stokes equations with free boundary \cite{le1993steady}, Boussinesq equations with leaking boundary conditions and Tresca slip \cite{kim2022steady}, the fixed-point analysis for Navier--Stokes equations with mixed boundary conditions \cite{manzoni2019saddle},   the analysis on viscous flows around obstacles with non-homogeneous boundary data \cite{ingram2011finite}, the solvability of Boussinesq equations with mixed (and non-homogeneous) boundary data \cite{arndt2020existence,ceretani2019boussinesq}, and the regularity of split between normal and tangential parts of the velocity as boundary conditions for the Navier--Stokes equations in \cite{ebmeyer2001steady}. 
	However it is important to mention that, up to our knowledge, the set of equations we face here has not been analysed in the existing literature.

The divergence-conforming discontinuous Galerkin (DG) method (introduced in \cite{cockburn2007note}) represents a very useful numerical approach that produces divergence-free velocities.  Additionally, velocity error estimates could be determined in a manner that is resilient to variations in pressure. Moreover, locally, conservation guarantees a divergence-form representation for the coupled systems at the discrete level.  Studies on this regard can be found in, for instance, \cite{badia24,buerger2019h,schroeder2017stabilised,john2017divergence}. 
     
	\paragraph{Plan}
	The rest of the paper is organised as follows. The remainder of this section lists useful notation to be used throughout the paper. Section~\ref{sec:model} is devoted to  the governing equations and the specific boundary conditions needed in a typical operation of a desalination unit. There we also derive a weak formulation and provide preliminary properties of the weak forms. In Section~\ref{sec:wellp} we conduct the analysis of existence and uniqueness of weak solution to the coupled system. We state an abstract result and show that the Navier--Stokes equations with membrane boundary conditions adhere to that setting. This section also describes the fixed-point analysis. Section~\ref{sec:fe} contains the definition of a conforming discretisation, a stabilisation technique proposed in \cite{verfurth91}, and then we define a new H(div)-conforming method and state main properties of the modified discrete variational forms. {This is accompanied by a brief discussion on a least-squares stabilised method using Lagrange multipliers, similar to that of \cite{verfurth86,verfurth91}}. The unique solvability of the discrete {divergence-free} problem is studied in Section~\ref{sec:discrete-wellp}, while the derivation of a priori error estimates is presented in Section~\ref{sec:cv}. Section \ref{sec:lagrange-multiplier} provides a brief description of a least-squares stabilised scheme using continuous elements. Qualitative properties of the proposed formulations are explored in Section~\ref{sec:experiments}, and we also confirm numerically the convergence rates predicted by the theory.

	\paragraph{Preliminaries and notation}
	Let $\O$ be a polygonal  bounded domain of $\R^2$ with Lipschitz boundary $\DO$.  
	   We employ standard notation for Lebesgue spaces, Sobolev spaces and their respective norms. Given $s\geq 0$ and $p\in[1,\infty]$, we denote by $\L^p(\O)$ Lebesgue space endowed with the norm $\Vert\cdot\Vert_{\L^p(\O)}$, while $\HsO$ denotes a Hilbert space. Vectors spaces and vector-valued functions are written in bold letters. For instance, for $s\geq 0$, we simply write $\bH^s(\O)$ instead of $[\H^s(\O)]^2$. If $s=0$, we use the convention $\H^0:=\L^2(\O)$ and $\bH^0:=\mathbf{L}^2(\O)$. For the sake of simplicity, the seminorms and norms in Hilbert spaces are denoted by $\vert \cdot\vert_{s,\Omega}$ and $\Vert\cdot\Vert_{s,\O}$, respectively. The unit outward normal at $\partial\Omega$ is denoted by $\bn:=(n_1,n_2)$, whereas $\boldsymbol{t}$  denotes the corresponding unit tangential vector 
    perpendicular to $\bn$ on $\partial\Omega$. Also, $\mathbf{0}$ denotes a generic null vector.	Let us also define for $s\geq 0$ the Hilbert space $\H(\div,\O):=\{\bv\in\mathbf{L}^2(\O):\div\bv\in\L^2(\O)\}$ whose norm is given by $\Vert\bv\Vert_{\div,\O}^2:=\Vert\bv\Vert_{0,\O}^2+\Vert\div\bv\Vert_{0,\O}^2$.
 We also recall that for a Hilbert space $H$ with inner product $(\cdot,\cdot)_H$,  $\mathcal{R}_H$ denotes the Riesz operator $H\rightarrow H'$  that to each $z$ associates the functional $f_z=\mathcal{R}_H z\in H'$ defined as
         $$
         \langle f_z,v\rangle_{H'\times H}=(z,v)_H\qquad \forall v \in H,
         $$
         with $\langle\cdot,\cdot\rangle_{H'\times H}$ denoting the duality pairing between $H'$ and $H$.
    $\mathcal{R}_H$ is one-to-one, and $\Vert \mathcal{R}_H\Vert_{\mathcal{L}(H,H')}=\Vert \mathcal{R}_H^{-1}\Vert_{\mathcal{L}(H',H)}=1$. Moreover, if $H$ is identified with $(H')'$, then $\mathcal{R}_H^{-1}=\mathcal{R}_{H'}$. Throughout the rest of the paper we abridge into $X\lesssim Y$ the inequality $X\leq CY$ with positive constant $C>0$ independent of $h$. Similarly for $X\gtrsim Y$. 

	\section{Model problem}\label{sec:model}
    Let us consider that the boundary of $\Omega$  is decomposed as $\partial\O=\Gi\cup\Go\cup\Gw\cup\Gm$. The sub-boundary $\Gi$ corresponds to the inflow, $\Go$ is the outflow boundary, $\Gw$ is a no-slip no penetration boundary, and $\Gm$ is the porous membrane boundary.
	The fluid inside the channel is assumed to be Newtonian, incompressible, and composed only by water and salt. The density and viscosity are constant and positive $\rho_0, \mu_0>0$. The solute diffusivity through the solvent is given by $D_{0}$. It is also assumed that the effect of pressure drop inside the channel due to viscous effects on the permeate flux (solution-diffusion equation) is negligible.

	\begin{figure}[t!]
		\centering
		\includegraphics[scale=0.675]{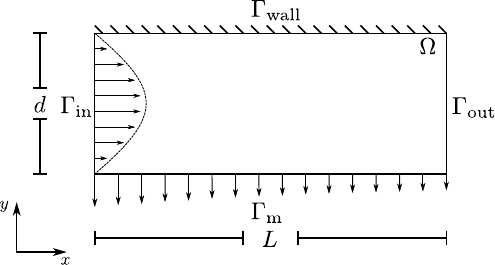}
		\caption{Cross-flow membrane filtration model with $\Gamma_i$ defined such that there is no corner points on $\Gm\cap\Gi$.}
		\label{fig:dibujo_canal}
	\end{figure}
	
	The resulting model is a coupling between Navier--Stokes and convection--diffusion equations:
	\begin{linenomath}
		\begin{align}
			\label{eq:coupled-model}
			-\mu_0\Delta\bu+\rho_0\bu\cdot\nabla\bu+\nabla p&= \bzero  \quad \text{ in }\Omega,\nonumber\\ 
			\nabla\cdot\bu&=0 \quad \text{ in }\Omega,\\ 
			-D_0\Delta \theta+\bu\cdot\nabla \theta&=0 \quad \text{ in }\Omega. \nonumber    \end{align}\end{linenomath}    
	Here, $\bu(\bx)$, $p(\bx)$, and $\theta(\bx)$ represent the fluid velocity, pressure and concentration profile, respectively. Along with \eqref{eq:coupled-model} we have the following set of boundary conditions:
	\begin{linenomath}
		\begin{subequations}\begin{align}	
				\bu&=\buin,\;\theta=\theta_{\mathrm{in}}&\text{ on }\Gi,\label{model-eq4}\\
				-D_0\nabla \theta\cdot\bn&=0&\text{ on }\Go,\label{model-eq5}\\
				\mu_0(\nabla\bu)\bn-p\bn&=\boldsymbol{0}&\text{ on }\Go,\label{model-eq6}\\
				\bu&=\boldsymbol{0}&\text{ on }\Gw,\label{model-eq6*}\\
				\left(\theta\bu-D_0\nabla \theta\right)\cdot \bn&=0&\text{ on }\Gm\cup\Gw, \label{model-eq7}\\
				\bu\cdot \boldsymbol{t}&=0&\text{ on }\Gm,\label{model-eq8}\\
				\bu\cdot\bn&=g(\theta)&\text{ on }\Gm.\label{model-eq9}
	\end{align}\end{subequations}\end{linenomath}
	In \eqref{model-eq4}, a parabolic inflow is considered with a fixed salt concentration representing a fully developed sea water flow trough the channel.     In \eqref{model-eq5}--\eqref{model-eq6} we have a zero salt flux and \textit{do nothing} boundary conditions. A zero Dirichlet boundary condition is imposed for the velocity across the impermeable wall. A full salt rejection is considered in the walls and the membrane, which is represented by \eqref{model-eq7}. In \eqref{model-eq9} we have the permeability condition, where the quantity $g(\theta)$ denotes the flow velocity at the membrane as a function of the concentration and it can be represented using the Darcy--Starling law. In order to enforce compatibility of the inlet and membrane boundary conditions we make precise the form of the parabolic profile as follows (assuming that the bottom left corner of the domain is located at the origin)
    \[ \bu_{\mathrm{in}} = \frac{1}{d}\bigl(u_0[d-y]y,-g(\theta_{\mathrm{in}})[d-y]\bigr)^{\tt t}, \quad \bu|_{\Gm} = (0,-g(\theta))^{\tt t},\]
    and we note that both $x$ and $y$ components of the velocity datum are continuous at the corner point $\Gi\cap \Gm$. This, or actually any smooth vertical inlet velocity that goes from  $u_\mathrm{in,2}=-g(\theta)$ at the bottom left to $u_{\mathrm{in,2}} = 0 $ at the top left corner (and faster than linear) is a suitable choice for the analysis, which requires the velocity datum in $\bH^{1/2}(\Gi\cup\Gw \cup\Gm)$.
    
    As usual in membrane filtration processes, there are several orders of magnitude difference between the inlet and permeate flow velocities. More precisely, relating \eqref{model-eq4} and \eqref{model-eq9} we have the following inequality:
	\begin{equation}
		\label{eq:inequality-inlet-membrane}
		0\leq g(\theta) \ll \vert \buin\vert.
	\end{equation}
	Furthermore, and motivated by mass conservation properties of the flow, it is well-known that the inflow velocity \eqref{model-eq4}, the membrane filtration with assumption \eqref{eq:inequality-inlet-membrane}, the wall conditions \eqref{model-eq6*}, and the outflow boundary condition \eqref{model-eq6} are related by the mass flow rate through inlet and outlets 
 \begin{equation}
     \label{eq:conservation}
    - \int_{\Gi}\rho_0 \buin\cdot\bn =  \int_{\Gm\cup\Go} \rho_0\bu\cdot\bn \qquad \text{and}\qquad 0\leq  \int_{\Gm}  g(\theta) \leq \int_{\Go} \bu\cdot\bn,  
 \end{equation}
	where we are also assuming that the fluid density is a positive constant. 
	\subsection{Weak formulation and preliminary properties} \label{subsec:weak_formulation}
	For  the forthcoming analysis, instead of \eqref{eq:inequality-inlet-membrane} we can simply assume that there exist positive constants $0<g_1\leq g_2$ such that 
	\begin{equation}\label{eq:assumption-g}
		g_1 \leq g(s) \leq g_2 \qquad \forall s \in \mathbb{R}^+,\end{equation}
	and note that, in practical applications, $g$ is a linear function of concentration.

Note that the Cauchy pseudostress associated with the fluid is defined as 
\[ \bsig := \mu_0\nabla\bu - p\bI,\]
where $\bI$ is the identity tensor in $\mathbb{R}^{2\times 2}$. Note also that the traction vector along the boundary, $\bsig\bn$ can be decomposed into its normal and tangential parts as follows 
\[ \bsig\bn = (\bsig\bn \cdot \bn)\bn + 
(\bsig\bn \cdot \boldsymbol{t})\boldsymbol{t}. \]

On the permeable sub-boundary $\Gm$ we define the following quantity 
\begin{equation}
	\label{eq:lagmult_lambda}
	\lambda:=-(\bsig\bn) \cdot \bn, 
\end{equation}
which is treated as a Lagrange multiplier. We then proceed to define the following functional spaces for fluid velocity, pressure, the Lagrange multiplier, and concentration,  respectively 
\begin{linenomath}
	\begin{gather*}
		\bH:=\{\bv\in \bH^1(\Omega)\;:\;\bv\cdot\bt = 0 \text{ on }\Gm,\,\bv=\buin\text{ on }\Gi,\,\bv=  \boldsymbol{0} \text{ on }\Gw\},\\  
		\bH_0:=\{\bv\in \bH^1(\Omega)\;:\;\bv\cdot\bt = 0 \text{ on }\Gm,\,\bv=\boldsymbol{0} \text{ on }\Gi\cup\Gw\},\qquad   Q:=\L^2(\Omega),\qquad 
		W:=\H^{-1/2}(\Gm), \\ 
		Z:=\{\tau\in \H^1(\Omega)\;:\;  \tau=\theta_{\mathrm{in}} \text{ on }\Gi \}, \qquad 
		Z_0:=\{\tau\in \H^1(\Omega)\;:\;  \tau=0 \text{ on }\Gi \},
	\end{gather*}
\end{linenomath}
where the boundary specifications in the spaces $\bH,\,\bH_0,\,Z,\,Z_0$ are understood in the sense of traces.

By testing the first equation of \eqref{eq:coupled-model} against $\bv\in \bH_0$, integrating by parts, using  boundary conditions \eqref{model-eq6}--\eqref{model-eq6*} and \eqref{eq:lagmult_lambda} we obtain
\begin{equation}\label{pairing-1}
	\int_\Omega\mu_0\nabla\bu:\nabla \bv-\int_\Omega p  \nabla\cdot\bv+\int_\Omega \rho_0\left(\bu\cdot\nabla \bu\right)\cdot\bv + \langle\lambda,\bv\cdot\bn\rangle_{\Gm}=0.
\end{equation}
Here $\langle\cdot,\cdot\rangle_{\Gm}$ denotes the duality pairing between $\H^{-1/2}(\Gm)$ and its dual $\H^{1/2}(\Gm)$, with respect to the $L^2(\partial\Omega)$-norm. 
As usual, the incompressibility constraint is written weakly as 
\begin{equation*}
	\int_\Omega q \nabla \cdot \bu = 0 \quad \forall q\in L^2(\Omega).
\end{equation*}
On the other hand, using the incompressibility condition we can rewrite  \eqref{eq:coupled-model} as
\begin{equation}
	\label{eq:rewrite-convection-diffusion}
	\nabla\cdot\left(\theta\bu - D_0\nabla\theta\right)=0\quad \text{in}\;\O.
\end{equation}
Then, testing \eqref{eq:rewrite-convection-diffusion} against $\tau\in Z_0$, integrating by parts and using  \eqref{model-eq5} and \eqref{model-eq7} we obtain
\[
D_0\int_\O\nabla\theta\cdot\nabla\tau - \int_\O \theta (\bu\cdot\nabla\tau) + \langle \theta(\bu\cdot\bn),\tau\rangle_{\Go} =0 \quad \forall \tau \in Z_0,
\]
where the well-definedness of the last term on the left-hand side is addressed later in \eqref{eq:continuity-widetilde_c}.  

We remark that the permeability condition \eqref{model-eq9} is imposed weakly as follows
\begin{equation}\label{pairing-2}
	\langle\xi,\bu\cdot\bn\rangle_{\Gm} = \langle  \xi, g(\theta)\rangle_{\Gm}\quad\forall \xi\in W,
\end{equation}
whereas the zero tangential velocity condition \eqref{model-eq8} is imposed strongly on the velocity space. Furthermore, we introduce the following bilinear and trilinear forms
\begin{linenomath}
	\begin{gather*}
		a(\bu,\bv):=\mu_0\int_\Omega \nabla\bu:\nabla\bv,\qquad 
		\widetilde{a}(\bw;\bu,\bv):=\rho_0\int_\Omega (\bw\cdot\nabla\bu)\cdot\bv,\qquad c(\theta,\tau):=D_0\int_\Omega \nabla \theta\cdot\nabla\tau, \\
		b(\bv,(q,\xi)):=-\int_\Omega q \nabla\cdot\bv+\langle\xi,\bv\cdot\bn\rangle_{\Gm}, 
		\qquad 
		\widetilde{c}(\bw;\theta,\tau):=-\int_\Omega \theta(\bw\cdot\nabla \tau)+\langle\theta(\bw\cdot \bn),\tau\rangle_{\Go}.\nonumber
	\end{gather*}
\end{linenomath}

Thanks to the assumed regularity of the inflow velocity and concentration, it is possible to prove that the   velocity and concentration extension functions (liftings) $\bU \in \bH^1(\Omega)$ and $\Theta \in \H^1(\Omega)$, respectively,  exist and are well defined (see, e.g. \cite{arndt2020existence,lorca1999initial}), where  
they satisfy (in the sense of traces) 
\begin{equation}
    \label{eq:102}
\bU = \buin \;\text{on}\; \Gi, \quad 
\bU = \boldsymbol{0} \;\text{on}\;\Gw, \quad \bU \cdot \bt = 0 \;\text{on}\; \Gm, \quad \div\,\bU = 0 \;\text{in}\;\Omega, \quad 
\Theta = \theta_{\mathrm{in}} \;\text{on}\;\Gi.\end{equation}
With them, we have that a weak solution for the coupled model is defined as $(\bu,(p,\lambda),\theta)\in \bH\times (Q\times W) \times Z$ with 
\[\bu = \bU + \bu_0,\qquad \theta = \Theta + \theta_0,\] 
and where $(\bu_0,(p,\lambda),\theta_0)\in \bH_0\times (Q\times W) \times Z_0$
solves
\begin{equation}
	\label{eq:cross-flow-fv1}
	\begin{aligned}
		a( \bU + \bu_0,\bv)+ \widetilde{a}( \bU + \bu_0; \bU + \bu_0,\bv) + b(\bv,(p,\lambda)) &=0 &\forall \bv\in \bH_0,\\
		b( \bU + \bu_0,(q,\xi))&= \langle\xi, g(\Theta + \theta_0)\rangle_{\Gm} &\forall (q,\xi)\in Q\times W,\\
		c(\Theta + \theta_0,\tau) + \widetilde{c}(\bU + \bu_0;\Theta + \theta_0,\tau)&=0&\forall \tau\in Z_0.
	\end{aligned}
\end{equation}
Note that if prescribed boundary conditions for both $\bu$ and $\theta$ are considered in $\Gm$ (that is, without a coupling effect), then we fall into a typical formulation of Boussinesq equations. On the other hand, note that the nonlinearity inherited by the boundary condition \eqref{model-eq7} is present in the trilinear form $\widetilde{c}$ trough a contribution in $\Go$. 

Next we stress that the bilinear and trilinear forms considered in the above weak formulation are uniformly bounded. It suffices to apply  Hölder's inequality, Sobolev embeddings and trace inequalities (see, for instance, \cite[Section 1.1]{temam2001navier}):
\begin{linenomath}
	\begin{subequations}
		\begin{align}
			\label{eq:cont-a}   \vert a(\bu,\bv)\vert&\lesssim \Vert \bu\Vert_{1,\O}\Vert \bv\Vert_{1,\O}&\bu,\bv\in \bH^1(\O),\\
			\vert b(\bv,(q,\xi))\vert &\lesssim \Vert\bv\Vert_{1,\O}(\Vert q\Vert_{0,\O}+\Vert \xi\Vert_{-1/2,\Gm})&\bv\in\bH^1(\O),q\in L^2(\O),\xi\in \H^{-1/2}(\Gm),\label{eq:cont-b}\\
			\vert c(\psi,\tau)\vert&\lesssim \Vert \psi\Vert_{1,\O}\Vert \tau\Vert_{1,\O} &\psi,\tau\in \H^1(\O), \label{eq:cont-c}
			\\
			|\widetilde{a}(\bw;\bu,\bv)|&\lesssim \Vert\bw\Vert_{1,\Omega}\,\Vert\bu\Vert_{1,\Omega}\,\Vert\bv\Vert_{1,\Omega},&\bw,\bu,\bv\in \bH^1(\O)\label{eq:continuity-widetilde_a}\\
			|\widetilde{c}(\bw;\theta,\tau)|&\lesssim \Vert\bw\Vert_{1,\Omega}\,\Vert\tau\Vert_{1,\Omega}\, \,\Vert\theta\Vert_{1,\Omega},&\bw\in\bH^1(\O),\theta,\tau\in \H^1(\O)\label{eq:continuity-widetilde_c}.
\end{align}\end{subequations}\end{linenomath}
In particular, for \eqref{eq:continuity-widetilde_c} we have used that
$$
		\left|\int_{\Go}\theta(\bw\cdot\bn)\tau\right|\leq \Vert\tau\Vert_{L^4(\Go)}\, \Vert\bw\Vert_{0,\Go}\,\Vert\theta\Vert_{L^4(\Go)}\leq \Vert\tau\Vert_{L^4(\partial\Omega)}\, \Vert\bw\Vert_{0,\partial\Omega}\,\Vert\theta\Vert_{L^4(\partial\Omega)}.
$$
We also note that thanks to the vector and scalar forms of Poincar\'e inequality, we have, respectively, the ellipticity for the bilinear forms $a(\cdot,\cdot)$ and $c(\cdot,\cdot)$ as
\begin{linenomath}
	\begin{subequations}
		\begin{align}
			\label{eq:coer-a}    |a(\bv,\bv)|&\gtrsim \Vert \bv\Vert_{1,\O}^2&\forall\bv\in\bH^1(\O),\\
			\label{eq:coer-c}   |c(\tau,\tau)|&\gtrsim \Vert \tau\Vert_{1,\O}^2  &\forall\tau\in\H^1(\O).    
\end{align}\end{subequations}\end{linenomath}

Consider a fixed $\zeta\in Z$ and denote  by $\bX^g$ the following subspace of $\bH$ associated with the bilinear form  $b(\cdot,(\cdot,\cdot))$ 
\begin{align}
	\label{eq:X}
	\bX^g&:=\left\{\bv\in\bH: b(\bv,(q,\xi))=\langle \xi, g(\zeta)\rangle_{\Gm} \ \forall (q,\xi)\in Q\times W \right\}\nonumber\\
	& =\left\{\bv\in\bH: \nabla\cdot\bv=0 \text{ in }\O,  \quad \bv\cdot\bn =g(\zeta) \text{ on }\Gm\right\},\\
	& =\left\{\bv\in\bH^1(\Omega): \nabla\cdot\bv=0 \text{ in }\O, \quad \bv =\buin \text{ on }\Gi, \quad \bv\cdot\bn= g(\zeta) \text{ on }\Gm, \quad \bv = \bzero  \text{ on }\Gw\right\}.	\nonumber
\end{align}
For the advection term we use the well-known identity 
\[\int_\Omega (\bw \cdot \nabla \theta) \tau + \int_\Omega (\bw \cdot \nabla \tau) \theta  = -\int_\Omega (\nabla\cdot \bw) \theta \tau + \int_{\partial\Omega}(\bw \cdot \bn) \theta \tau, \]
to readily obtain 
\begin{equation}\label{tc-form}
	\widetilde{c}(\bw;\tau,\tau)=\frac{1}{2}[(\bw\cdot\bn,\tau^2)_{\Go} -(\bw\cdot\bn,\tau^2)_{\Gm} ]  \qquad \forall \bw\in\bX^g,\; \forall \tau\in Z_0,
\end{equation}
and thanks to the inlet boundary condition, the property \eqref{eq:inequality-inlet-membrane}, and a simple conservation argument following \eqref{eq:conservation}, it follows that $\int_{\Go}  \bw\cdot\bn \geq \int_{\Gm} \bw\cdot\bn $. Hence,
\begin{equation}\label{eq:ct-pos}
	\widetilde{c}(\bw;\tau,\tau)\geq 0, \qquad \bw\in\bX^g, \tau\in Z_0.
\end{equation}

\section{Well-posedness of the continuous problem}\label{sec:wellp}
For the analysis of existence and uniqueness of solution we use a fixed-point argument separating the solution between two uncoupled problems. First consider the Navier--Stokes equations, which consist in finding, for a given $\zeta = \zeta_0 + \Theta$ (with $\zeta_0\in Z_0$), the tuple  $(\bu_0+\bU,p,\lambda) \in \bH\times Q\times W$ such that   
\begin{linenomath}
	\begin{subequations}
		\begin{align}
			\label{eq:ns1}   a(\bu,\bv)  + \widetilde{a}(\bu;\bu,\bv) 
   + b(\bv,(p,\lambda)) & =0
   \quad \forall \bv \in \bH_0,\\
			\label{eq:ns2}    b(\bu,(q,\xi)) & = \langle \xi, g(\zeta) \rangle_{\Gm} \quad \forall (q,\xi)\in Q\times W.
		\end{align}
\end{subequations}\end{linenomath}
Note also that if $(\bw,p,\lambda)\in \bH\times Q\times W$ is a solution to \eqref{eq:ns1}-\eqref{eq:ns2} then $\bw$ is in the space $\bX^g$. 

Secondly, consider the uncoupled advection--diffusion equation in weak form: For a given advecting velocity $\bw =\bw_0 + \bU \in \bX^g$ (with $\bw_0 \in \bH_0$), find $\theta_0\in Z_0$ such that  
\begin{equation}
	\label{eq:diff}    c(\theta_0,\tau) + \widetilde{c}(\bw;\theta_0,\tau) = -c(\Theta,\tau) - \widetilde{c}(\bw; \Theta,\tau) \quad \forall \tau \in Z_0.
\end{equation}

\subsection{Well-posedness of the Navier--Stokes equations with membrane boundary conditions}\label{sec:wellp-ns}
In order to address the unique solvability of \eqref{eq:ns1}-\eqref{eq:ns2}, we use a linear Stokes problem with membrane boundary conditions, the Banach fixed-point theorem, and the  Babu\v{s}ka--Brezzi theory for saddle-point problems. 
For this we follow the analysis from \cite{cocquet2021error}. 
Let us consider the problem of finding, for a given $\zeta\in Z$ and a given $\bw\in\bH_0$, the tuple  $(\bu_0,p,\lambda) \in \bH_0\times Q\times W$ such that 
\begin{linenomath}
	\begin{subequations}
		\begin{align}
			\label{eq:stokes1}   a(\bu_0,\bv)  + b(\bv,(p,\lambda)) & = - a(\bU,\bv) - \widetilde{a}(\bw+\bU;\bw+\bU,\bv) \quad \forall \bv \in \bH_0,\\
			\label{eq:stokes2}    b(\bu_0,(q,\xi)) & = \langle \xi, g(\zeta) \rangle_{\Gm} \quad \forall (q,\xi)\in Q\times W.
		\end{align}
\end{subequations}\end{linenomath}
In order to show that this linear Stokes system is well-posed we follow  arguments similar to \cite[Lemma 3.1]{verfurth86}, \cite{ern2004applied,layton1999weak}.  We start with  the following result (the proof is carried out in a standard way, but we present it for sake of completeness). 
\begin{lemma}\label{lem:B-inf}
	The following inf-sup condition holds 
	\[
	\sup_{\bv\in \bH_0,\bv\neq\boldsymbol{0}}\frac{
		 b(\bv,(q,\xi)) 
	}{\Vert\bv\Vert_{1,\Omega}}\gtrsim \Vert q\Vert_{0,\Omega}+\Vert \xi\Vert_{-1/2,\Gm} \quad \forall (q,\xi)\in Q\times W. 
	\]
\end{lemma}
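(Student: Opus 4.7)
The plan is to prove the inf-sup condition by a two-step Fortin-type construction, building the test function $\bv \in \bH_0$ as a linear combination $\bv = \bv_1 + \alpha \bv_2$ in which $\bv_1$ controls the pressure part $q$ and $\bv_2$ controls the multiplier part $\xi$, with $\alpha>0$ chosen sufficiently small at the end.

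First I would construct $\bv_1$ by invoking the classical Stokes inf-sup condition on the Dirichlet space $\bH_0^1(\Omega) \subset \bH_0$: for every $q \in Q$ there exists $\bv_1 \in \bH_0^1(\Omega)$ with $-\int_\Omega q\,\nabla\cdot \bv_1 \gtrsim \Vert q\Vert_{0,\Omega}^2$ and $\Vert \bv_1\Vert_{1,\Omega} \lesssim \Vert q\Vert_{0,\Omega}$. Since $\bv_1$ vanishes on all of $\partial\Omega$, the boundary pairing $\langle \xi, \bv_1\cdot\bn\rangle_{\Gm}$ contributes nothing, so $b(\bv_1,(q,\xi))\gtrsim \Vert q\Vert_{0,\Omega}^2$.

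Next I would build $\bv_2$ to activate the $\xi$-component. Using the Riesz representation in $\H^{1/2}(\Gm)$ I select $\eta \in \H^{1/2}(\Gm)$ with $\langle \xi,\eta\rangle_{\Gm} = \Vert \xi\Vert_{-1/2,\Gm}^2$ and $\Vert \eta\Vert_{1/2,\Gm} = \Vert \xi\Vert_{-1/2,\Gm}$. I then lift the scalar $\eta$ to a vector field $\bv_2 \in \bH^1(\Omega)$ whose trace satisfies $\bv_2 = \eta\,\bn$ on $\Gm$, $\bv_2 = \boldsymbol{0}$ on $\Gi \cup \Gw$, in particular $\bv_2 \cdot \bt = 0$ on $\Gm$, with $\Vert \bv_2\Vert_{1,\Omega} \lesssim \Vert \eta\Vert_{1/2,\Gm}$. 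This lifting is the delicate step: it relies on the polygonal geometry of Figure~\ref{fig:dibujo_canal}, where $\Gm$ is adjacent to $\Go$ on one end (permitting nonzero normal trace there) and to $\Gw$ on the other end (where the vanishing of $\eta$ at the common vertex is encoded consistently with $W=\H^{-1/2}(\Gm)$). Granting such a lifting, one has $b(\bv_2,(q,\xi)) = -\int_\Omega q\,\nabla\cdot\bv_2 + \Vert \xi\Vert_{-1/2,\Gm}^2$, whence
\begin{equation*}
b(\bv_2,(q,\xi)) \;\geq\; \Vert \xi\Vert_{-1/2,\Gm}^2 - C\,\Vert q\Vert_{0,\Omega}\Vert \xi\Vert_{-1/2,\Gm}.
\end{equation*}

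Finally I combine: with $\bv = \bv_1 + \alpha \bv_2$, a Young's inequality on the cross term yields
\begin{equation*}
b(\bv,(q,\xi)) \;\geq\; \tfrac{1}{2}\,C_1\,\Vert q\Vert_{0,\Omega}^2 + \tfrac{\alpha}{2}\,\Vert \xi\Vert_{-1/2,\Gm}^2 \;\gtrsim\; \Vert q\Vert_{0,\Omega}^2 + \Vert \xi\Vert_{-1/2,\Gm}^2,
\end{equation*}
provided $\alpha$ is chosen small enough in terms of the constants above. Since $\Vert \bv\Vert_{1,\Omega} \lesssim \Vert q\Vert_{0,\Omega} + \Vert \xi\Vert_{-1/2,\Gm}$, dividing and taking the supremum gives the claim. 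The main obstacle is the construction of the lifting $\bv_2$ respecting simultaneously the normal trace on $\Gm$, the vanishing conditions on $\Gi\cup\Gw$, and the tangential constraint on $\Gm$; all of the remaining manipulations are elementary.
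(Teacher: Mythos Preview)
Your two-step Fortin strategy is a reasonable alternative route, but Step~1 as written has a gap: since $Q=\L^2(\Omega)$ carries \emph{no} zero-mean constraint here (the pressure is fixed by the natural condition on $\Go$), the classical Stokes inf-sup on the full Dirichlet space $\bH_0^1(\Omega)$ fails for constants. Indeed $\int_\Omega q\,\nabla\cdot\bv_1=0$ for every $\bv_1\in\bH_0^1(\Omega)$ whenever $q$ is a nonzero constant, so one cannot have $-\int_\Omega q\,\nabla\cdot\bv_1\gtrsim\Vert q\Vert_{0,\Omega}^2$ for all $q\in Q$. The fix is easy: take $\bv_1$ with zero trace on $\Gi\cup\Gw\cup\Gm$ but free on $\Go$; this is still a subspace of $\bH_0$, the boundary pairing on $\Gm$ still vanishes, and the open outflow portion restores surjectivity of the divergence onto all of $\L^2(\Omega)$. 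With that correction the rest of your argument (Riesz representative $\eta$, lifting $\eta\,\bn$ to obtain $\bv_2$, then combining $\bv_1+\alpha\bv_2$ with $\alpha$ small and Young's inequality) goes through.

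For comparison, the paper takes a more direct path: it constructs a \emph{single} test function $\hat{\bv}$ by solving an auxiliary Stokes problem with prescribed divergence $\nabla\cdot\hat{\bv}=q$ in $\Omega$, Dirichlet data $\hat{\bv}=\tilde\xi\,\bn$ on $\Gm$ (with $\tilde\xi$ the Riesz representative of $\xi$ in $\H^{1/2}(\Gm)$), homogeneous Dirichlet data on $\Gi\cup\Gw$, and a natural condition on $\Go$. This delivers both $-\int_\Omega q\,\nabla\cdot\hat{\bv}$ and $\langle\xi,\hat{\bv}\cdot\bn\rangle_{\Gm}$ as perfect squares at once, so there is no cross term to absorb and no parameter $\alpha$ to tune. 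The delicate lifting issue you correctly flag is the same in both approaches; the paper simply packages it into the well-posedness of that auxiliary mixed-boundary Stokes problem, citing \cite{girault1979finite}.
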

\begin{proof}
Thanks to the Riesz representation theorem, for a given $\xi \in W$ there exists $\tilde{\xi} \in \H^{1/2}(\Gm)$ such that $\|\tilde{\xi}\|_{1/2,\Gm}=\|\xi\|_{-1/2,\Gm}$. 
	For a given pair $(q,\xi)\in Q\times W$, let us consider the following auxiliary  Stokes problem with mixed boundary conditions 
	\begin{linenomath}\begin{align}
			-\boldsymbol{\Delta} \hat{\bv} + \nabla\zeta  = \boldsymbol{0} \quad \text{and}\quad \nabla\cdot \hat{\bv} &= q \quad  \text{in } \Omega,\nonumber\\
			(\boldsymbol{\nabla}\hat{\bv} - \zeta\bI)\bn & = \boldsymbol{0} \quad \text{on } \Go,\label{aux1}\\
   \hat{\bv} & = \bzero  \quad \text{on } \Gi \cup \Gw,\nonumber \\
			\hat{\bv}& = \tilde{\xi}\bn \quad \text{on } \Gm.\nonumber
		\end{align}    
	\end{linenomath}
For the compatibility of the Dirichlet data in \eqref{aux1}, we can proceed similarly as in \cite{lions1969quelques,grisvard2011elliptic} (see also \cite{le1993steady}) and use a cut-off and mollification argument. 
Let us first define an extension of $\tilde{\xi}$ on $\Gamma_* := \Gm \cup \Gi \cup \Gw$ as follows
\[ \tilde{\xi}_0(\bx) := \begin{cases}
    \tilde{\xi}(\bx) & \bx \in \Gm,\\
    0 & \bx \in \Gamma_*\setminus \Gm,
\end{cases}\]
and note that the extension is linear in $\tilde{\xi}$. Also, $\tilde{\xi}_0 \in \mathrm{L}^2(\Gamma_*)$, but it is not necessarily in $\mathrm{H}^{1/2}(\Gamma_*)$ because of the jump across the left-bottom corner belonging to $\partial\Gm$. Next we consider a smooth cut-off function $\eta_{\epsilon} \in C^\infty(\Gamma_*)$ satisfying $0 \leq \eta_{\epsilon} \leq 1$, $\eta_\epsilon \equiv 1$ on the interior of $\Gm$, and $\text{supp}(\eta_{\epsilon})\subset \Gamma_*$. Its transition from 1 to 0 happens in a small layer of width $\epsilon$ near the aforementioned corner in $\partial\Gm \subset \Gamma_*$. In addition, we denote by $\rho_{\epsilon}$ a standard smooth mollifier
 defined on local charts of $\Gamma_*$, and define the function 
 \[ \xi_{\epsilon}^*:= \eta_{\epsilon}(\tilde{\xi}_0 * \rho_{\epsilon}),\]
 where the convolution smooths out the jump across the left-bottom corner in $\partial\Gm$. Then we have $\xi_{\epsilon}^* \in \mathrm{H}^{1/2}(\Gamma_*)$, $\text{supp}(\xi_{\epsilon}^*)\subset \Gamma_*$, $\xi_{\epsilon}^* \equiv \tilde{\xi}$ on the interior of $\Gm$ (away from the $\epsilon$-transition layer), and $\xi_{\epsilon}^*$ vanishes near $\partial\Gamma_*$. Therefore $\xi_{\epsilon}^* \in \mathrm{H}^{1/2}_{00}(\Gamma_*)$.
 
 We further notice that the map $\tilde{\xi}\mapsto \xi_{\epsilon}^*$ is linear and bounded
 \begin{equation}\label{aux3} \| \xi_{\epsilon}^*\|_{1/2,00;\Gamma_*} \leq C_{\epsilon} \| \tilde{\xi}\|_{1/2,\Gm} =  C_{\epsilon} \|\xi\|_{-1/2,\Gm}, \end{equation}
 with $C_{\epsilon}>0$ a constant depending only on $\epsilon$ and on the geometry of $\Gm$ and $\Gamma_*$.
 
 Then, the  boundary conditions in \eqref{aux1} are rewritten in a well-defined manner as 
 \begin{equation}\label{aux2} 
 (\boldsymbol{\nabla}\hat{\bv} - \zeta\bI)\bn  = \boldsymbol{0} \quad \text{on } \Go, \qquad 
  \hat{\bv} = \xi_{\epsilon}^*\bn \quad \text{on } \Gamma_* = \partial\Omega \setminus \Go,\end{equation}
with $\xi_{\epsilon}^* \bn \in \bH^{1/2}_{00}(\Gamma_*)$, and thanks    
to \cite{girault1979finite}, we can assert that there exists a unique velocity solution to \eqref{aux1} (with mixed boundary conditions   as in \eqref{aux2}), for which it is straightforward to verify that there holds  
	\begin{subequations}
		\begin{gather}\label{v1a}
			\nabla\cdot\hat{\bv} =  q, \quad (\hat{\bv}\cdot\bn)|_{\Gm} = \tilde\xi, \quad (\hat{\bv}\cdot\bt)|_{\Gm} = 0,\\ 
			\|\hat{\bv}\|_{1,\Omega}\lesssim \Vert q\Vert_{0,\Omega}+\Vert \xi\Vert_{-1/2,\Gm}, \label{v1b}
		\end{gather}
	\end{subequations}
(where we have also used \eqref{aux3}). In this way we have that  $\hat{\bv} \in  \bH_0\setminus\{\boldsymbol{0}\}$, and thus we  can write 
	$$
		\sup_{\bv\in \bH_0,\bv\neq\boldsymbol{0}}\frac{b(\bv,(q,\xi))}{\Vert\bv\Vert_{1,\Omega}}\geq 
		\frac{b(\hat{\bv},(q,\xi))}{\Vert\hat{\bv}\Vert_{1,\Omega}}
		=  \frac{\Vert q\Vert^2_{0,\Omega}+\Vert \xi\Vert^2_{-1/2,\Gm}}{\Vert\hat{\bv}\Vert_{1,\Omega}}
		\gtrsim \Vert q\Vert_{0,\Omega}+\Vert \xi\Vert_{-1/2,\Gm},
	$$
	where we have employed \eqref{v1a} and \eqref{v1b}. 
\end{proof}

In the context of the fixed-point analysis of \eqref{eq:ns1}-\eqref{eq:ns2}, for a given $\zeta = \zeta_0 + \Theta \in Z$ we define the linear functional $G_\zeta\in (Q\times W)'$ as follows
\[\langle G_\zeta, (q,\xi)\rangle := \langle \xi, g(\zeta)\rangle_{\Gm} \quad \forall (q,\xi)\in Q\times W.\]
Similarly, for a given $\bw_0\in \bH_0$ we define the linear functional $F_{\bw_0,\bU}\in \bH_0'$:
$$
\bv \mapsto \langle F_{\bw_0,\bU}, \bv\rangle := - \tilde{a}(\bw_0 + \bU, \bw_0 + \bU, \bv) - a(\bU,\bv),
$$
where $\bU\in\bH^1(\Omega)$ is the divergence-free lifting defined in \eqref{eq:102}. 
Then, there holds (see \cite[Lemma 16]{cocquet2021error}) 
\begin{equation}
	\label{eq:103}
	\Vert \bU\Vert_{1,\O}\lesssim \Vert \buin\Vert_{1/2,\Gi}.
\end{equation}

\begin{lemma}\label{lem:stokes}
    For known liftings $\bU, \Theta$ and given $\bw_0\in\bH_0$ and $\zeta_0\in Z_0$, there exists a unique $(\bu_0,p,\lambda)\in\bH_0\times Q\times W$ such that
 \begin{linenomath}
 	\begin{subequations}
 		\begin{align}
 			\label{eq:104a}   a(\bu_0,\bv)  + b(\bv,(p,\lambda)) & = F_{\bw_0,\bU}(\bv) \quad \forall \bv \in \bH_0,\\
 			\label{eq:104b}    b(\bu_0,(q,\xi)) & = G_\zeta(q,\xi) \quad \forall (q,\xi)\in Q\times W.
 		\end{align}
 \end{subequations}\end{linenomath}
Moreover, the following estimates hold  
\begin{subequations}\begin{align}
	\label{eq:107}
	\Vert\bu_0\Vert_{1,\O}&\leq \frac{1}{\underline{\alpha}}\left[\Vert F_{\bw_0,\bU}\Vert_{\bH_0'} +\frac{\underline{\alpha}+\|a\|}{\beta}g_2\right],\\
	\label{eq:108}
	\Vert(p,\lambda)\Vert_{Q\times W}&\leq\frac{1}{\beta}\left[\left(1+\frac{\|a\|}{\underline{\alpha}}\right)
 \Vert F_{\bw_0,\bU}\Vert_{\bH_0'} 
 +\frac{\|a\|(\underline{\alpha}+\|a\|)}{\underline{\alpha}\beta}g_2\right].
\end{align}\end{subequations}
\end{lemma}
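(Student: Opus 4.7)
The plan is to recognize the problem \eqref{eq:104a}--\eqref{eq:104b} as a standard linear saddle-point problem and to apply classical Brezzi theory for mixed formulations (as referenced in the preceding paragraph via \cite{gatica2021further,cocquet2021error}). To this end, I would verify the four classical hypotheses in turn: (i) continuity of the bilinear forms $a$ on $\bH_0 \times \bH_0$ and $b$ on $\bH_0 \times (Q \times W)$; (ii) coercivity of $a$ on the kernel of $b$; (iii) the inf-sup condition for $b$; and (iv) continuity of the right-hand side functionals $F_{\bw_0,\bU}$ and $G_\zeta$.

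The continuity of $a$ with constant $\|a\|$ and of $b$ follow directly from \eqref{eq:cont-a}--\eqref{eq:cont-b}. For the data, the functional $G_\zeta$ is continuous with norm bounded by $g_2$ thanks to \eqref{eq:assumption-g}, while $F_{\bw_0,\bU}$ is continuous with norm bounded in terms of $\|\bw_0+\bU\|_{1,\Omega}^2 + \|\bU\|_{1,\Omega}$ by combining \eqref{eq:cont-a} and \eqref{eq:continuity-widetilde_a}. Coercivity of $a$ on all of $\bH_0$ follows from \eqref{eq:coer-a} together with Poincaré's inequality (applicable since elements of $\bH_0$ vanish on $\Gi \cup \Gw$, which has positive surface measure), yielding a constant $\underline{\alpha}>0$; in particular this holds on the kernel of $b$. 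Finally, the required inf-sup condition for $b$, with some constant $\beta>0$, is precisely the content of Lemma~\ref{lem:B-inf}.

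With these four ingredients in hand, the classical Brezzi theorem yields the unique existence of $(\bu_0,(p,\lambda)) \in \bH_0 \times (Q \times W)$ solving \eqref{eq:104a}--\eqref{eq:104b}, and provides the standard a priori bounds
$$
\|\bu_0\|_{1,\Omega} \leq \frac{1}{\underline{\alpha}}\|F_{\bw_0,\bU}\|_{\bH_0'} + \frac{1}{\beta}\left(1 + \frac{\|a\|}{\underline{\alpha}}\right) \|G_\zeta\|_{(Q\times W)'},
$$
$$
\|(p,\lambda)\|_{Q\times W} \leq \frac{1}{\beta}\left(1 + \frac{\|a\|}{\underline{\alpha}}\right)\|F_{\bw_0,\bU}\|_{\bH_0'} + \frac{\|a\|(\underline{\alpha}+\|a\|)}{\underline{\alpha}\beta^2} \|G_\zeta\|_{(Q\times W)'}.
$$
Inserting the bound $\|G_\zeta\|_{(Q\times W)'} \leq g_2$ and rewriting $\frac{1}{\beta}(1 + \|a\|/\underline{\alpha}) = (\underline{\alpha}+\|a\|)/(\underline{\alpha}\beta)$ for the velocity estimate yields \eqref{eq:107} and \eqref{eq:108} exactly as stated.

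There is no genuine obstacle here: the potential subtlety lies in item (ii), where one might worry about establishing coercivity only on the non-trivial kernel of $b$ (which, as noted in \eqref{eq:X}, consists of divergence-free velocities with prescribed but vanishing normal trace on $\Gm$ for the associated homogeneous problem), but since \eqref{eq:coer-a} combined with Poincaré gives $a$-coercivity on the whole of $\bH_0$, no reduction to the kernel is needed. The nonlinear Navier--Stokes character of \eqref{eq:ns1}--\eqref{eq:ns2} is deliberately absent from this linear lemma, entering only through the dependence of the data $F_{\bw_0,\bU}$ on the frozen argument $\bw_0$; this will be addressed in the subsequent fixed-point step.
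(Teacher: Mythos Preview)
Your proposal is correct and follows essentially the same approach as the paper's proof: both verify boundedness of the functionals $F_{\bw_0,\bU}$ and $G_\zeta$, invoke the coercivity \eqref{eq:coer-a} of $a$ on all of $\bH_0$, the continuity bounds \eqref{eq:cont-a}--\eqref{eq:cont-b}, and the inf-sup condition from Lemma~\ref{lem:B-inf}, and then appeal to the classical Babu\v{s}ka--Brezzi theory (the paper cites \cite[Theorem~2.34]{ern2004applied}) to obtain existence, uniqueness, and the stated a priori estimates. Your explicit display of the abstract Brezzi bounds and the algebraic reduction to \eqref{eq:107}--\eqref{eq:108} is slightly more detailed than the paper's concise reference, but the substance is identical.
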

\begin{proof}
Note that the linear functionals are bounded. Indeed, we have 
$$
	|G_{\zeta}(\xi)|\leq \Vert\xi\Vert_{-1/2,\Gm}\Vert g(\zeta_0+\Theta)\Vert_{1/2,\Gm}\leq g_2\Vert \xi\Vert_{-1/2,\Gm} \qquad \forall (q,\xi)\in Q\times W,
$$
which, owing to \eqref{eq:assumption-g},  implies that 
\begin{equation}
	\label{eq:106}
	\Vert G_{\zeta}\Vert_{(Q\times W)'} = g_2.  
\end{equation}
Also, after using triangle inequality together with the boundedness from \eqref{eq:cont-a} and  \eqref{eq:continuity-widetilde_a}, we obtain 
$$
\begin{aligned}
	|F_{\bw_0,\bU}(\bv)|&= |-\tilde{a}(\bw_0 + \bU, \bw_0 + \bU, \bv) - a(\bU,\bv)|\\
	&\leq C_i\rho_0\Vert \bw_0 +\bU\Vert_{1,\Omega}^ 2\Vert \bv\Vert_1 + \mu_0\Vert \bU\Vert_{1,\O}\Vert \bv\Vert_{1,\O}\\
	&\leq \left[C_i\rho_0\left(\Vert\bw_0\Vert_{1,\O}^ 2 +\Vert \bU\Vert_{1,\O}^2\right) + \mu_0\Vert\bU\Vert_{1,\O} \right]\Vert\bv\Vert_{1,\O},
\end{aligned}
$$
and due to \eqref{eq:103}, there holds 
$$
\Vert F_{\bw_0,\bU}\Vert_{\bH_0'}=C_i\rho_0\left(\Vert\bw_0\Vert_{1,\O}^ 2 +\Vert \bu_{\mathrm{in}}\Vert_{1/2,\Gi}^2\right) + \mu_0\Vert \bu_{\mathrm{in}}\Vert_{1/2,\Gi},
$$
where $C_i>0$ is the continuity constant of the Sobolev embedding used in \eqref{eq:continuity-widetilde_a}. 

Using Lemma \ref{lem:B-inf} with  inf-sup  constant $\beta$ depending on the trace inequality constant, embedding theorems and elliptic regularity, the coercivity of the bilinear form $a(\cdot,\cdot)$ \eqref{eq:coer-a} with constant $\underline{\alpha} = \frac{\mu_0}{2}\min\{\frac{1}{C_P^2},1\}$  (where $C_P$ denotes the Poincar\'e constant), the continuity of $a(\cdot,\cdot)$ \eqref{eq:cont-a} with constant $\|a \| \leq \max\{1,\mu_0\}$, and the continuity of the bilinear form $b(\cdot,(\cdot,\cdot))$ with constant $\|b\| \leq 1$ in \eqref{eq:cont-b}, the Babu\v{s}ka--Brezzi theory (see, e.g., \cite[Theorem 2.34]{ern2004applied}) guarantees that there exists a unique tuple $(\bu_0,p,\lambda)$ solution of \eqref{eq:104a}--\eqref{eq:104b}, which also satisfies the  continuous dependence on data. 
\end{proof}

Let us remark that the unique Stokes velocity from Lemma~\ref{lem:stokes} is in the space $\bX^g$.  We now derive a fixed-point strategy for \eqref{eq:ns1}--\eqref{eq:ns2}. Let us define the map 
$$
\mathcal{J}:\bH\to \bH\times Q\times W, \quad \bw \mapsto 
	\mathcal{J}(\bw)=(\mathcal{J}_1(\bw),\mathcal{J}_2(\bw),\mathcal{J}_3(\bw))=:(\bu_0+\bU,p,\lambda),
$$
where $(\bu_0,p,\lambda)$ is the unique solution to \eqref{eq:104a}--\eqref{eq:104b} and $(\bu_0+\bU,p,\lambda)$ includes the non-homogeneous boundary condition. We focus our attention on the first component of the mapping, i.e.  $\mathcal{J}_1:\bH\rightarrow\bH$. The following results collect the required properties for the application of the Banach fixed-point theorem on $\mathcal{J}_1$, and hence the existence and uniqueness of a solution to \eqref{eq:ns1}--\eqref{eq:ns2}. 
\begin{lemma}\label{lem:ns-map}
 Consider the following closed ball of $\bH$
 \[\bM_{R_0} = \{ \bv \in \bX^g: \|\bv\|_{1,\Omega}\leq R_0\}. \]
 Assume that the data (in particular, $\|\bu_{\mathrm{in}}\|_{1/2,\Gi}$) are sufficiently small so that 
 \begin{equation}\label{eq:assumption-uin}  0<4R_0<1-\sqrt{1-4(\Vert \buin\Vert_{1/2,\Gi}^2 + g_2 + 2\Vert \buin\Vert_{1/2,\Gi})}. 
 \end{equation}
 Then $\mathcal{J}_1(\bM_{R_0}) \subseteq \bM_{R_0}$.
\end{lemma}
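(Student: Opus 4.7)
The plan is to exploit the linear stability bound \eqref{eq:107} furnished by Lemma~\ref{lem:stokes}, combine it with the explicit estimate on $\|F_{\bw_0,\bU}\|_{\bH_0'}$ obtained in the proof of that lemma, and verify that the smallness hypothesis \eqref{eq:assumption-uin} is precisely what is needed to keep the image $\mathcal{J}_1(\bw)$ inside $\bM_{R_0}$.

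First I would take an arbitrary $\bw \in \bM_{R_0}$ and decompose it as $\bw = \bw_0 + \bU$ with $\bw_0 := \bw - \bU$. Since $\bw\in\bX^g$ and $\bU$ satisfies the compatible boundary data on $\Gi\cup\Gw$, the homogenized field $\bw_0$ vanishes on those portions, so $\bw_0$ is an admissible input for Lemma~\ref{lem:stokes}. That lemma then yields a unique $(\bu_0,p,\lambda)\in\bH_0\times Q\times W$ solving \eqref{eq:104a}--\eqref{eq:104b}, and by construction $\mathcal{J}_1(\bw)=\bu_0+\bU$ is divergence-free, equals $\bu_{\mathrm{in}}$ on $\Gi$, vanishes on $\Gw$, and satisfies the membrane trace condition $(\bu_0+\bU)\cdot\bn=g(\zeta)$ on $\Gm$. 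Therefore $\mathcal{J}_1(\bw)\in\bX^g$ automatically, and only the norm bound $\|\mathcal{J}_1(\bw)\|_{1,\Omega}\le R_0$ remains to be verified.

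The next step is to plug the available estimates into \eqref{eq:107}. Using the trilinear continuity bound \eqref{eq:continuity-widetilde_a} applied to $\widetilde{a}(\bw;\bw,\bv)=\widetilde{a}(\bw_0+\bU;\bw_0+\bU,\bv)$, the continuity \eqref{eq:cont-a}, and the lifting estimate \eqref{eq:103}, the right-hand side functional satisfies $\|F_{\bw_0,\bU}\|_{\bH_0'}\lesssim \|\bw\|_{1,\Omega}^2+\|\bu_{\mathrm{in}}\|_{1/2,\Gi}^2+\|\bu_{\mathrm{in}}\|_{1/2,\Gi}$. Since $\|\bw\|_{1,\Omega}\le R_0$, substituting into \eqref{eq:107} and then adding $\|\bU\|_{1,\Omega}$ via the triangle inequality produces an estimate of the form
\[
\|\mathcal{J}_1(\bw)\|_{1,\Omega}\;\le\; C\bigl[R_0^2+\|\bu_{\mathrm{in}}\|_{1/2,\Gi}^2+2\|\bu_{\mathrm{in}}\|_{1/2,\Gi}+g_2\bigr],
\]
where $C>0$ depends solely on $\rho_0,\mu_0,\underline{\alpha},\|a\|,\beta$ and the Sobolev/trace embedding constants used so far.

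Finally, enforcing $\|\mathcal{J}_1(\bw)\|_{1,\Omega}\le R_0$ reduces to the quadratic inequality $C R_0^2 - R_0 + C\bigl(\|\bu_{\mathrm{in}}\|_{1/2,\Gi}^2+2\|\bu_{\mathrm{in}}\|_{1/2,\Gi}+g_2\bigr)\le 0$. After normalizing $C$ so that it is absorbed into the expression under the square root, the hypothesis \eqref{eq:assumption-uin} is exactly the statement that $R_0$ lies below the smaller root of this quadratic, whence the inequality holds and $\mathcal{J}_1(\bw)\in\bM_{R_0}$. I expect the only genuinely delicate step to be this bookkeeping: tracking the continuity/coercivity/inf-sup constants cleanly through \eqref{eq:107}, and in particular handling the cross term $2R_0\|\bU\|_{1,\Omega}$ sharply enough (for instance by using $(a+b)^2\le 2a^2+2b^2$ judiciously) so that the quadratic produced matches the algebraic shape of \eqref{eq:assumption-uin} without spurious contributions. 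Once the constants line up, the conclusion is immediate.
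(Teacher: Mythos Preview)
Your proposal is correct and follows essentially the same route as the paper: invoke the stability bound \eqref{eq:107} from Lemma~\ref{lem:stokes}, insert the estimate on $\|F_{\bw_0,\bU}\|_{\bH_0'}$, add $\|\bU\|_{1,\Omega}$ by the triangle inequality and \eqref{eq:103}, and reduce to a quadratic inequality in $R_0$ whose solvability is precisely \eqref{eq:assumption-uin}. The only cosmetic differences are that the paper targets $\|\mathcal{J}_1(\bw)\|_{1,\Omega}\le R_0/2$ rather than $R_0$ (absorbing the extra factor of two into the hidden constant), and that it does not spell out the membership $\mathcal{J}_1(\bw)\in\bX^g$ as you do, deferring that to the remark immediately after Lemma~\ref{lem:stokes}.
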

\begin{proof}
    Let us fix $R_0> 0$ and consider $\bw\in \bM_{R_0}$. We have, thanks to the definition of $\mathcal{J}_1$, triangle inequality, and \eqref{eq:107}, that 
    $$
    \begin{aligned}        \Vert\mathcal{J}_1(\bw)\|_{1,\Omega}&=\Vert\bu_0 + \bU \Vert_{1,\O}\\
    &\leq \frac{1}{\underline{\alpha}}\left[C_i\rho_0\left(\Vert\bw_0\Vert_{1,\O}^ 2 +\Vert \bu_{\mathrm{in}}\Vert_{1/2,\Gi}^2\right) + \mu_0\Vert \bu_{\mathrm{in}}\Vert_{1/2,\Gi}\right] 
    + \frac{\|a\|(\underline{\alpha}+\|a\|)}{\underline{\alpha}\beta}g_2+ \Vert \buin\Vert_{1/2,\Gi}\\
    &\lesssim \Vert\bw\Vert_{1,\O}^2 + \Vert \buin\Vert_{1/2,\Gi}^2 + g_2+ 2\Vert \buin\Vert_{1/2,\Gi}\\
    &\lesssim R_0^ 2+ \Vert \buin\Vert_{1/2,\Gi}^2 + g_2 + 2\Vert \buin\Vert_{1/2,\Gi},
    \end{aligned}
    $$
    where the hidden constant depends on $\underline{\alpha}, \|a\|, \beta, C_i$, and $\rho_0$. 
    After elementary algebraic manipulations we can assert that the right-hand side above is smaller or equal than $\frac{R_0}{2}$ if \eqref{eq:assumption-uin} holds. 
\end{proof}

\begin{lemma}\label{lem:ns-lip}
 There exists a positive constant $L_{\mathcal{J}_1}$, depending only on data (in particular, on the inlet velocity $\|\bu_{\mathrm{in}}\|_{1/2,\Gi}$), such that 
 \[\|\mathcal{J}_1(\bw_1)- \mathcal{J}_1(\bw_2)\|_{1,\Omega} \leq L_{\mathcal{J}_1} \|\bw_1 - \bw_2\|_{1,\Omega} \qquad \forall \bw_1,\bw_2 \in \bM_{R_0}.\]
\end{lemma}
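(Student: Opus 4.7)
The plan is to exploit the linearity of the Stokes-type system \eqref{eq:104a}--\eqref{eq:104b} in its arguments $(\bu_0,p,\lambda)$, combined with the bilinear splitting of the trilinear convective form $\widetilde{a}$, to reduce the Lipschitz estimate to a coercivity argument on the kernel of $b(\cdot,(\cdot,\cdot))$.

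Fix $\bw_1,\bw_2 \in \bM_{R_0}$ and write $\bw_i = \bw_{0,i} + \bU$ with $\bw_{0,i}\in\bH_0$. By Lemma~\ref{lem:stokes} there exist unique tuples $(\bu_{0,i},p_i,\lambda_i)\in \bH_0 \times Q \times W$, and $\mathcal{J}_1(\bw_i)=\bu_{0,i}+\bU$. Subtracting the two Stokes-type problems and noting that the second equation has right-hand side $G_\zeta$ that is independent of $\bw$, the differences $\be_0 := \bu_{0,1} - \bu_{0,2}$, $\pi := p_1 - p_2$, $\eta := \lambda_1 - \lambda_2$ satisfy
\begin{equation*}
a(\be_0,\bv) + b(\bv,(\pi,\eta)) = \bigl(F_{\bw_{0,1},\bU} - F_{\bw_{0,2},\bU}\bigr)(\bv) \quad \forall \bv\in\bH_0, \qquad b(\be_0,(q,\xi)) = 0.
\end{equation*}
Hence $\be_0$ lies in the kernel $\bV_0 := \{\bv \in \bH_0 : b(\bv,(q,\xi))=0 \ \forall (q,\xi)\in Q\times W\}$, on which the bilinear form $a$ is coercive with constant $\underline{\alpha}$ by \eqref{eq:coer-a}.

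Testing with $\bv = \be_0 \in \bV_0$ eliminates the $b$ term. For the right-hand side I use the identity
\begin{equation*}
\widetilde{a}(\bw_1;\bw_1,\bv) - \widetilde{a}(\bw_2;\bw_2,\bv) = \widetilde{a}(\bw_1 - \bw_2;\bw_1,\bv) + \widetilde{a}(\bw_2;\bw_1 - \bw_2,\bv),
\end{equation*}
which, combined with the continuity estimate \eqref{eq:continuity-widetilde_a} and the bound $\|\bw_i\|_{1,\Omega} \leq R_0$ valid in $\bM_{R_0}$, yields
\begin{equation*}
\bigl|\bigl(F_{\bw_{0,1},\bU} - F_{\bw_{0,2},\bU}\bigr)(\be_0)\bigr| \leq 2 C_i \rho_0 \, R_0 \, \|\bw_1 - \bw_2\|_{1,\Omega}\,\|\be_0\|_{1,\Omega},
\end{equation*}
where the $a(\bU,\cdot)$ contribution cancels since it is independent of $\bw$. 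Applying coercivity on the left, dividing by $\|\be_0\|_{1,\Omega}$, and using $\mathcal{J}_1(\bw_1) - \mathcal{J}_1(\bw_2) = \be_0$ gives the claim with
\begin{equation*}
L_{\mathcal{J}_1} = \frac{2 C_i \rho_0 R_0}{\underline{\alpha}}.
\end{equation*}

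The main subtlety, rather than a true obstacle, is the careful bookkeeping to verify that the differences of the lifting-dependent terms drop out so that the Lipschitz constant is proportional to $R_0$ (and hence, through \eqref{eq:assumption-uin}, ultimately controlled by $\|\bu_{\mathrm{in}}\|_{1/2,\Gi}$ and $g_2$). This dependence on data is precisely what will later allow, under a further smallness assumption, to turn $\mathcal{J}_1$ into a contraction and conclude uniqueness via Banach's fixed-point theorem.
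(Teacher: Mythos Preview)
Your proof is correct and follows essentially the same route as the paper: subtract the two Stokes-type systems, observe that the second equation has zero right-hand side so the velocity difference lies in the kernel of $b$, test the first equation with this difference, and combine the bilinear splitting of $\widetilde{a}$ with coercivity of $a$. The only cosmetic difference is the form of the Lipschitz constant: the paper tracks the lifting separately and obtains $L_{\mathcal{J}_1}=\underline{\alpha}^{-2}(2R_0+\|\bu_{\mathrm{in}}\|_{1/2,\Gi})$, whereas you absorb $\bU$ into $\bw_i\in\bM_{R_0}$ from the start and arrive at the tidier $2C_i\rho_0 R_0/\underline{\alpha}$; both lead to the same contraction conclusion under small data.
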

\begin{proof}
Given $\bw_1,\bw_2 \in \bM_{R_0}$, let us consider the two well-posed Stokes problems \eqref{eq:stokes1}-\eqref{eq:stokes2} for each fixed velocity and giving the unique solutions 
\[(\bu_{01}+\bU,p_1,\lambda_1) = \mathcal{J}(\bw_1), \quad (\bu_{02}+\bU,p_2,\lambda_2) = \mathcal{J}(\bw_2) ,\]
respectively. 
Subtracting the corresponding first and second equations in these problems and noticing that $G$ does not depend on $\bw_1,\bw_2$, we arrive at 
\begin{subequations}\label{diff-}
  \begin{align}
 \label{diff-a}	   a(\bu_{01}-\bu_{02},\bv)  + b(\bv,(p_1-p_2,\lambda_1-\lambda_2)) & = F_{\bw_1,\bU}(\bv) -F_{\bw_2,\bU}(\bv) \quad \forall \bv \in \bH_0,\\
 			   b(\bu_{01}-\bu_{02},(q,\xi)) & = 0 \quad \forall (q,\xi)\in Q\times W.
 		\end{align}\end{subequations}
Regarding  the right-hand side of \eqref{diff-a}, note now that 
\begin{align*}
|F_{\bw_1,\bU}(\bv) &-F_{\bw_2,\bU}(\bv) |  = \biggl|\int_\Omega [(\bw_2+\bU)\cdot \nabla (\bw_2+\bU)-(\bw_1+\bU)\cdot \nabla (\bw_1+\bU)]\cdot \bv\biggr| \\
& \leq \int_\Omega |(\bw_2 - \bw_1)\cdot \nabla (\bw_2+\bU) \cdot \bv | + \int_\Omega |(\bw_1+\bU) \cdot \nabla (\bw_2-\bw_1) \cdot \bv|,\\
& \lesssim (\|\bw_1\|_{1,\Omega} + \|\bw_2\|_{1,\Omega} + \|\buin\|_{1/2,\Gi})\|\bw_1-\bw_2\|_{1,\Omega}\|\bv\|_{1,\Omega}.
\end{align*}

On the other hand, in \eqref{diff-} we can use as test functions $\bv =\bu_{01}-\bu_{02} $, $q = p_1-p_2$, $\xi = \lambda_1-\lambda_2$ and subtract the two equations  to obtain 
\[
    a(\bu_{01}-\bu_{02},\bu_{01}-\bu_{02}) = F_{\bw_1,\bU}(\bu_{01}-\bu_{02}) -F_{\bw_2,\bU}(\bu_{01}-\bu_{02}). 
\]
Finally, we use the definition of $\mathcal{J}$, the two previous results, and the coercivity of  $a(\cdot,\cdot)$ to get 
\begin{align*}
    \|\mathcal{J}_1(\bw_1)- &\mathcal{J}_1(\bw_2)\|^2_{1,\Omega}  = \|\bu_{01}+\bU-\bu_{02}-\bU\|^2_{1,\Omega} \\
    & \leq \frac{1}{\underline{\alpha}^2}  a(\bu_{01}-\bu_{02},\bu_{01}-\bu_{02}) \\
    & \leq  \frac{1}{\underline{\alpha}^2} | F_{\bw_1,\bU}(\bu_{01}-\bu_{02}) -F_{\bw_2,\bU}(\bu_{01}-\bu_{02})| \\
    & \lesssim (\|\bw_1\|_{1,\Omega} + \|\bw_2\|_{1,\Omega} + \|\buin\|_{1/2,\Gi}) \|\bw_1-\bw_2\|_{1,\Omega}\|\bu_{01}-\bu_{02}\|_{1,\Omega} \\
    & = (\|\bw_1\|_{1,\Omega} + \|\bw_2\|_{1,\Omega} + \|\buin\|_{1/2,\Gi}) \|\bw_1-\bw_2\|_{1,\Omega}\|\mathcal{J}_1(\bw_1)- \mathcal{J}_1(\bw_2)\|_{1,\Omega}.
\end{align*}
Then the desired result follows by dividing through $\|\mathcal{J}_1(\bw_1)- \mathcal{J}_1(\bw_2)\|_{1,\Omega}$,   recalling that $\bw_1,\bw_2\in \bM_{R_0}$, and choosing 
\begin{equation}\label{eq:choosing-L} L_{\mathcal{J}_1} =  \frac{1}{\underline{\alpha}^2}(2R_0 + \|\buin\|_{1/2,\Gi}). \end{equation}
\end{proof}

\begin{theorem}\label{th:ns}
Given $\zeta_0\in Z_0$, assume that the data is sufficiently small as in \eqref{eq:assumption-uin}, and in light of \eqref{eq:choosing-L}, further assume that $R_0$ is taken such as 
 \begin{equation}\label{eq:assumption-L}
L_{\mathcal{J}_1}= 2R_0 + \|\buin\|_{1/2,\Gi}  < \underline{\alpha}^2. 
 \end{equation}
 Then there exists a unique solution $(\bu_0+\bU,p,\lambda) \in \bH \times Q \times W$ to \eqref{eq:ns1}--\eqref{eq:ns2}. 
\end{theorem}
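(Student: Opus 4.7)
The plan is to apply the Banach fixed-point theorem to the operator $\mathcal{J}_1:\bH\to\bH$ introduced just before Lemma~\ref{lem:ns-map}, on the closed ball $\bM_{R_0}\subset \bX^g$. The two preparatory lemmas have already done most of the work: Lemma~\ref{lem:ns-map} provides self-mapping under the data smallness condition \eqref{eq:assumption-uin}, and Lemma~\ref{lem:ns-lip} provides Lipschitz continuity with explicit constant $L_{\mathcal{J}_1}$ as in \eqref{eq:choosing-L}.

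First I would verify that $\bM_{R_0}$, being the intersection of a closed ball in $\bH^1(\Omega)$ with the affine set $\bX^g$ (which is itself closed in $\bH$ since the constraints in \eqref{eq:X} are continuous), is a nonempty complete metric space when endowed with the metric induced by $\|\cdot\|_{1,\Omega}$. Nonemptiness follows from the existence of the lifting $\bU$ together with the construction in Lemma~\ref{lem:stokes}, which guarantees that at least one element (e.g.\ $\mathcal{J}_1(\bU)$) lies in $\bM_{R_0}$ provided \eqref{eq:assumption-uin} holds. By Lemma~\ref{lem:ns-map}, $\mathcal{J}_1$ maps $\bM_{R_0}$ into itself.

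Next, I would invoke Lemma~\ref{lem:ns-lip} to obtain
\[
\|\mathcal{J}_1(\bw_1)-\mathcal{J}_1(\bw_2)\|_{1,\Omega}\leq L_{\mathcal{J}_1}\|\bw_1-\bw_2\|_{1,\Omega}\qquad\forall\,\bw_1,\bw_2\in\bM_{R_0},
\]
with $L_{\mathcal{J}_1}=\underline{\alpha}^{-2}(2R_0+\|\buin\|_{1/2,\Gi})$. The additional smallness assumption \eqref{eq:assumption-L} forces $L_{\mathcal{J}_1}<1$, so $\mathcal{J}_1$ is a strict contraction on $\bM_{R_0}$. The Banach fixed-point theorem then yields a unique fixed point $\bu\in\bM_{R_0}$, that is, a unique velocity $\bu=\bu_0+\bU\in\bH$ satisfying $\mathcal{J}_1(\bu)=\bu$.

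To conclude, I would unpack the definition of $\mathcal{J}$: the fixed point $\bu$ is exactly the velocity component of the unique solution of the linear Stokes problem \eqref{eq:104a}--\eqref{eq:104b} with $\bw_0=\bu_0$, which, by the Babu\v{s}ka--Brezzi theory used in Lemma~\ref{lem:stokes}, also provides a unique pair $(p,\lambda)\in Q\times W$. Since the trilinear form contribution $\widetilde{a}(\bu;\bu,\bv)$ recovered in the right-hand side of \eqref{eq:104a} via $F_{\bu_0,\bU}$ matches the convective term in \eqref{eq:ns1}, the triple $(\bu_0+\bU,p,\lambda)$ is a solution of \eqref{eq:ns1}--\eqref{eq:ns2}; uniqueness of the pressure and multiplier follows from that of $\bu$ together with the inf-sup condition of Lemma~\ref{lem:B-inf}. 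The only subtle point, and the one I would be most careful about, is ensuring that the smallness window \eqref{eq:assumption-uin} and \eqref{eq:assumption-L} is compatible, i.e.\ that there exists a choice of $R_0>0$ simultaneously satisfying the self-mapping bound and the contraction bound; both conditions are of the form ``$R_0$ small, $\|\buin\|_{1/2,\Gi}$ and $g_2$ small'', so a short algebraic check shows that the admissible region is nonempty.
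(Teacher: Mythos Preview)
Your proposal is correct and follows exactly the same route as the paper: invoke the well-definedness of $\mathcal{J}$ from Lemma~\ref{lem:stokes}, the self-mapping property from Lemma~\ref{lem:ns-map}, the Lipschitz bound from Lemma~\ref{lem:ns-lip}, and conclude via the Banach fixed-point theorem since \eqref{eq:assumption-L} makes $L_{\mathcal{J}_1}<1$. Your write-up is in fact more careful than the paper's (which compresses all of this into a single sentence), particularly in checking that $\bM_{R_0}$ is a nonempty complete metric space and in recovering the unique pressure/multiplier pair from Lemma~\ref{lem:stokes}.
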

\begin{proof}
The result is a direct consequence of the well-definedness of $\mathcal{J}$ together with Lemmas~\ref{lem:ns-map} and \ref{lem:ns-lip}, and the fact that \eqref{eq:assumption-L} gives that  $\mathcal{J}$ is a contraction mapping. 
\end{proof}
Note that the proof of Theorem~\ref{th:ns} is also valid if in Lemmas~\ref{lem:ns-map}-\ref{lem:ns-lip} we take any $\bw\in \bH$ with $\|\bw\|_{1,\Omega}\leq R_0$, that is, we have not used that $\bw\in \bX^g$. This additional condition is required in the  analysis of unique solvability of the decoupled advection--diffusion, as stated next.

	\subsection{Well-posedness of the advection--diffusion equation}\label{sec:wellp-ad}
	The unique solvability of problem \eqref{eq:diff} follows after using \eqref{eq:cont-c}, \eqref{eq:coer-c}, \eqref{eq:ct-pos} together with the Lax--Milgram lemma, which also gives  
	\begin{align}
		\label{eq:bound-}
		\|\theta_0\|_{1,\Omega} & \lesssim \|\bw\|_{1,\Omega}(1 + \|\Theta\|_{1,\Omega}) +  \|\Theta\|_{1,\Omega} \nonumber \\
  & \leq \frac{1}{\underline{\alpha}}\left[\Vert F_{\bw_0,\bU}\Vert_{\bH_0'} +\frac{\underline{\alpha}+\|a\|}{\beta}g_2\right] (1 + \|\theta_{\mathrm{in}}\|_{1/2,\Gi}) + \|\theta_{\mathrm{in}}\|_{1/2,\Gi},
	\end{align} 
	where we have used trace inequality and continuous dependence on data of both uncoupled problems. 
	
	\subsection{Fixed-point analysis for the coupled flow--transport problem}\label{sec:fixed-point}
	With the development above, we are now able to properly define the following solution operators 
	\[\tilde{S}: \bH \to Z, \quad \bw \mapsto \tilde{S}(\bw) := \theta_0 + \Theta, \]
	where $\theta_0$ is the unique solution of \eqref{eq:diff} (confirmed in Section~\ref{sec:wellp-ad}), and 
	\[S: Z \to \bH, \quad \zeta \mapsto S(\zeta) = (S_I(\zeta),S_{I\!I}(\zeta),S_{I\!I\!I}(\zeta)):= (\bu_0+\bU,p,\lambda), \]
	where $(\bu_0+\bU,p,\lambda)$ is the unique solution of \eqref{eq:ns1}-\eqref{eq:ns2} (established in Section~\ref{sec:wellp-ns}). 
	The nonlinear problem in weak form \eqref{eq:cross-flow-fv1} is therefore equivalent to the following fixed-point equation
	\begin{equation}
		\text{find $\bu= \bu_0 + \bU \in \bH$ such that} \   \bu = T(\bu),    \label{eq:fixed-point}
	\end{equation}
	where $T:\bH\to \bH$ is defined as $\bu \mapsto T(\bu) = ({S}_I\circ \tilde{S})(\bu_0 + \bU)$. 
	
	We proceed then to define the closed ball in $\bH$ 
	$\bM_{R_1} = \{\bw \in \bX^g: \|\bw\|_{1,\Omega} \leq   R_1\}$,  
	and assume that $R_1<1$, which (owing to Lemma~\ref{lem:ns-map} and \eqref{eq:bound-}) amounts to consider the assumption on the model data 
	\begin{equation}
		\label{eq:assumption-data}
	\begin{aligned}
	   & \max\left\{  R_0^ 2+ \Vert \buin\Vert_{1/2,\Gi}^2 + g_2 + 2\Vert \buin\Vert_{1/2,\Gi},\right. \\
    & \qquad \frac{1}{\underline{\alpha}}\left[
    C_i\rho_0(R_0^ 2 +\Vert \bu_{\mathrm{in}}\Vert_{1/2,\Gi}^2)
    \left.
    + \mu_0\Vert \bu_{\mathrm{in}}\Vert_{1/2,\Gi}
    +\frac{\underline{\alpha}+\|a\|}{\beta}g_2\right] (1 + \|\theta_{\mathrm{in}}\|_{1/2,\Gi}) + \|\theta_{\mathrm{in}}\|_{1/2,\Gi} \right\} <1.\end{aligned}
	\end{equation}
	Then we have that $T(\bM_{R_1}) \subseteq \bM_{R_1}$ ($T$ maps the ball above into itself). 
	
	We can also assert that $T$ is Lipschitz continuous. By definition, it suffices to verify the Lipschitz continuity of both $S$ (actually, we only require the component $S_I$) and $\tilde{S}$. 
	
	\begin{lemma}\label{lem:S-lip}
		Assume that \eqref{eq:assumption-data} holds. Then there exists $L_S>0$ such that
		\[\| S_I(\zeta_1)-S_I(\zeta_2)\|_{\bH} \leq L_S \|\zeta_1 - \zeta_2\|_{Z} \qquad \text{ for all $\zeta_1,\zeta_2\in Z$}. \]
	\end{lemma}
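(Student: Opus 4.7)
\emph{Setup.} Let $\bu_j = S_I(\zeta_j) = \bu_{0j}+\bU$ together with $(p_j,\lambda_j)=(S_{I\!I}(\zeta_j),S_{I\!I\!I}(\zeta_j))$ for $j=1,2$, both of which exist uniquely by Theorem~\ref{th:ns}. Since $\zeta_j=\zeta_{0j}+\Theta$ only enters \eqref{eq:ns1}--\eqref{eq:ns2} through the datum $g(\zeta_j)$ on $\Gm$, subtracting the two corresponding systems shows that $(\bu_1-\bu_2,\,p_1-p_2,\,\lambda_1-\lambda_2)\in\bH_0\times Q\times W$ solves the \emph{linear} saddle-point problem
\begin{align*}
a(\bu_1-\bu_2,\bv) + b(\bv,(p_1-p_2,\lambda_1-\lambda_2)) &= \widetilde{a}(\bu_2;\bu_2,\bv)-\widetilde{a}(\bu_1;\bu_1,\bv) && \forall\bv\in\bH_0,\\
b(\bu_1-\bu_2,(q,\xi)) &= \langle\xi,\,g(\zeta_1)-g(\zeta_2)\rangle_{\Gm} && \forall(q,\xi)\in Q\times W.
\end{align*}

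\emph{Bounding the data.} For the first equation I would use the algebraic identity $\widetilde{a}(\bu_1;\bu_1,\bv)-\widetilde{a}(\bu_2;\bu_2,\bv)=\widetilde{a}(\bu_1-\bu_2;\bu_1,\bv)+\widetilde{a}(\bu_2;\bu_1-\bu_2,\bv)$, the continuity \eqref{eq:continuity-widetilde_a}, and the fact that $\bu_1,\bu_2\in\bM_{R_1}$, to show that the right-hand side defines a functional $F_\Delta\in\bH_0'$ with $\|F_\Delta\|_{\bH_0'}\lesssim R_1\,\|\bu_1-\bu_2\|_{1,\Omega}$. For the second equation I would invoke the Lipschitz continuity of $g$ (already implicit in the Darcy--Starling law alluded to after \eqref{model-eq9}; add this as an assumption supplementing \eqref{eq:assumption-g}), together with the trace inequality, to obtain $\|g(\zeta_1)-g(\zeta_2)\|_{1/2,\Gm}\leq L_g\,\|\zeta_1-\zeta_2\|_Z$ for some $L_g>0$.

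\emph{Conclusion via Babu\v{s}ka--Brezzi.} The Brezzi theory for this linear system, which applies thanks to the coercivity \eqref{eq:coer-a} of $a(\cdot,\cdot)$ on $\bH_0$ and the inf-sup condition of Lemma~\ref{lem:B-inf} (with constant $\beta$), yields an a priori estimate of exactly the form \eqref{eq:107},
\[
\|\bu_1-\bu_2\|_{1,\Omega}\;\lesssim\;\frac{1}{\underline{\alpha}}\,\|F_\Delta\|_{\bH_0'}+\frac{\underline{\alpha}+\|a\|}{\underline{\alpha}\,\beta}\,\|g(\zeta_1)-g(\zeta_2)\|_{1/2,\Gm}.
\]
Inserting the two bounds above gives $(1-CR_1/\underline{\alpha})\,\|\bu_1-\bu_2\|_{1,\Omega}\leq C\,L_g\,\|\zeta_1-\zeta_2\|_Z$. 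Under the smallness condition \eqref{eq:assumption-data} (which forces $R_1$ small relative to $\underline{\alpha}$), the prefactor on the left is strictly positive and we conclude with $L_S = CL_g/(1-CR_1/\underline{\alpha})$.

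\emph{Main obstacle.} The principal difficulty, compared with Lemma~\ref{lem:ns-lip}, is that the velocity difference $\bu_1-\bu_2$ \emph{does not} lie in a common space $\bX^g$: its normal trace on $\Gm$ carries the non-trivial datum $g(\zeta_1)-g(\zeta_2)$. A direct coercivity test with $\bv=\bu_1-\bu_2$ leaves the uncontrolled boundary pairing $\langle\lambda_1-\lambda_2,g(\zeta_1)-g(\zeta_2)\rangle_{\Gm}$, which cannot be bounded without independently controlling the Lagrange multiplier. Invoking the full saddle-point stability—equivalently, lifting $g(\zeta_1)-g(\zeta_2)$ into a divergence-free function in $\bH_0$ and testing with the corrected difference so that the $b$-pairing drops out—is what circumvents this obstruction.
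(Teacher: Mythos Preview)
Your argument is correct. You set up the linear difference system, bound both right-hand sides (the convective remainder via \eqref{eq:continuity-widetilde_a} and the membership $\bu_j\in\bM_{R_0}$, the membrane datum via a Lipschitz bound on $g$), invoke the full Babu\v{s}ka--Brezzi stability estimate of the form \eqref{eq:107}, and absorb the resulting $CR_1/\underline{\alpha}\,\|\bu_1-\bu_2\|_{1,\Omega}$ term using the smallness of data. Your remark on the main obstacle is exactly right: because $(\bu_1-\bu_2)\cdot\bn|_{\Gm}=g(\zeta_1)-g(\zeta_2)\neq 0$, a bare coercivity test leaves an uncontrolled pairing with $\lambda_1-\lambda_2$, and the saddle-point stability is what closes the argument.

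The paper takes a shorter and structurally different route: it works \emph{only} with the constraint equation \eqref{eq:ns2}, ignores the momentum equation and the convective nonlinearity altogether, uses the assumed \emph{linearity} of $g$ (stated just after \eqref{eq:assumption-g}) to write $g(\zeta_1)-g(\zeta_2)=g(\zeta_1-\zeta_2)$, and then claims the bound $\|\bu_1-\bu_2\|_{1,\Omega}(\|q\|_{0,\Omega}+\|\xi\|_{-1/2,\Gm})\lesssim b(\bu_1-\bu_2,(q,\xi))$ from the inf-sup condition of Lemma~\ref{lem:B-inf}. This yields an $L_S$ depending only on the slope of $g$ and on $\beta$, with no appearance of $R_1$ or $\underline{\alpha}$ and no absorption step. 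Note, however, that the inf-sup condition bounds a \emph{supremum over} $\bv\in\bH_0$, not the value of $b$ at the particular $\bv=\bu_1-\bu_2$, so that step in the paper is not justified as written. Your approach, while longer and producing a data-dependent $L_S$, is the one that is rigorously complete; it also applies to any Lipschitz $g$, not just linear ones.
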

	\begin{proof}
		For given $\zeta_1,\zeta_2\in Z$, let $(\bu_1,p_1,\lambda_1), (\bu_2,p_2,\lambda_2) \in \bH\times Q \times W$ be the unique solutions to the decoupled Navier--Stokes equations \eqref{eq:ns1}-\eqref{eq:ns2}, with $S_I(\zeta_1)= \bu_1$, $S_I(\zeta_2) = \bu_2$, respectively. Precisely from \eqref{eq:ns2}, and using the linearity of $g$, we obtain 
		\begin{equation}
			\label{eq:aux02}
			b(\bu_1-\bu_2,(q,\xi))  = \langle \xi, g(\zeta_1-\zeta_2) \rangle_{\Gm} \quad \forall (q,\xi)\in Q\times W.\end{equation} 
		Then, for a given $(q,\xi)\in Q\times W$ and with $\xi \neq 0$, we arrive at 
		\begin{linenomath}
			\begin{align*}
				\|\bu_1-\bu_2\|_{1,\Omega}\|\xi\|_{-1/2,\Gm} & \leq   \|\bu_1-\bu_2\|_{1,\Omega}(\|q\|_{0,\Omega} + \|\xi\|_{-1/2,\Gm})\\
				& \lesssim b(\bu_1-\bu_2,(q,\xi)) 
				 = \langle \xi, g(\zeta_1-\zeta_2) \rangle_{\Gm} \\
				& \leq L_S \|\xi\|_{-1/2,\Gm} \|\zeta_1-\zeta_2\|_{1,\Omega},
			\end{align*}
		\end{linenomath}
		where we have used the inf-sup condition from Lemma~\ref{lem:B-inf}, the relation \eqref{eq:aux02}, and the Cauchy--Schwarz inequality. Then the result follows after dividing by $\|\xi\|_{-1/2,\Gm}$ on both sides of the inequality. The Lipschitz constant $L_S$ depends on the slope of the function $g$ and on the inf-sup constant for $b(\cdot,\cdot)$. 
	\end{proof}

	\begin{lemma}\label{lem:S-tild-lip}
		Assume that \eqref{eq:assumption-data} holds. Then there exists $L_{\tilde{S}}>0$ such that 
		\[\| \tilde{S}(\bw_1)-\tilde{S}(\bw_2)\|_{Z} \leq L_{\tilde{S}} \|\bw_1 - \bw_2\|_{\bH} \qquad 
		\text{for all $\bw_1,\bw_2\in \bX^g$}.\] 
	\end{lemma}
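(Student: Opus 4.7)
The plan is to mimic the standard Lipschitz argument for advection--diffusion problems with varying convective field. Given $\bw_1,\bw_2\in\bX^g$, denote by $\theta_{0,1},\theta_{0,2}\in Z_0$ the two unique solutions of \eqref{eq:diff} associated with each advecting velocity, so that $\tilde S(\bw_i)=\theta_{0,i}+\Theta$ and $\tilde S(\bw_1)-\tilde S(\bw_2)=\theta_{0,1}-\theta_{0,2}\in Z_0$. The first step is to subtract the two equations defining $\theta_{0,1}$ and $\theta_{0,2}$ and, after adding and subtracting $\widetilde c(\bw_2;\theta_{0,1},\tau)$ on the left, rewrite the resulting identity as
\[
c(\theta_{0,1}-\theta_{0,2},\tau)+\widetilde c(\bw_2;\theta_{0,1}-\theta_{0,2},\tau) = -\widetilde c(\bw_1-\bw_2;\theta_{0,1}+\Theta,\tau)\quad\forall\tau\in Z_0.
\]

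The second step is to test with $\tau=\theta_{0,1}-\theta_{0,2}\in Z_0$. Since $\bw_2\in\bX^g$, property \eqref{eq:ct-pos} gives $\widetilde c(\bw_2;\theta_{0,1}-\theta_{0,2},\theta_{0,1}-\theta_{0,2})\geq 0$, so the convective self-term can be discarded. Combining this with the coercivity of $c(\cdot,\cdot)$ in \eqref{eq:coer-c} yields
\[
\underline{\alpha}\,\|\theta_{0,1}-\theta_{0,2}\|_{1,\Omega}^2\leq \bigl|\widetilde c(\bw_1-\bw_2;\theta_{0,1}+\Theta,\theta_{0,1}-\theta_{0,2})\bigr|.
\]

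The third step is to bound the right-hand side via the continuity estimate \eqref{eq:continuity-widetilde_c} of $\widetilde c$, obtaining a bound proportional to $\|\bw_1-\bw_2\|_{1,\Omega}\,\|\theta_{0,1}+\Theta\|_{1,\Omega}\,\|\theta_{0,1}-\theta_{0,2}\|_{1,\Omega}$. Dividing by $\|\theta_{0,1}-\theta_{0,2}\|_{1,\Omega}$ and using the a priori bound \eqref{eq:bound-} to control $\|\theta_{0,1}\|_{1,\Omega}$ in terms of the data (in particular of $\|\bw_1\|_{1,\Omega}\leq R_1$, $\|\bu_{\mathrm{in}}\|_{1/2,\Gi}$ and $\|\theta_{\mathrm{in}}\|_{1/2,\Gi}$) together with the bound on the lifting $\|\Theta\|_{1,\Omega}\lesssim \|\theta_{\mathrm{in}}\|_{1/2,\Gi}$ delivers the Lipschitz estimate with constant
\[
L_{\tilde S}\;\lesssim\;\frac{1}{\underline{\alpha}}\bigl(\|\theta_{0,1}\|_{1,\Omega}+\|\Theta\|_{1,\Omega}\bigr),
\]
depending only on the problem data through the smallness assumption \eqref{eq:assumption-data}.

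The main obstacle I expect is the nonlinear dependence of $\widetilde c$ on the convective field together with the boundary contribution on $\Go$: the standard trick of testing against the difference only works because the positivity \eqref{eq:ct-pos} of the trilinear form is available precisely when $\bw_2\in\bX^g$, which is exactly the restriction imposed in the statement. Keeping track that the splitting must leave a term of the form $\widetilde c(\bw_2;\cdot,\cdot)$ rather than $\widetilde c(\bw_1;\cdot,\cdot)$ is the subtle point, since only one of the two advecting velocities can be absorbed by the positivity argument; the mismatch is then absorbed into the $\widetilde c(\bw_1-\bw_2;\theta_{0,1}+\Theta,\cdot)$ source term, which is controlled by data thanks to \eqref{eq:bound-}.
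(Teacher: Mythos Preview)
Your argument is correct and follows essentially the same route as the paper: subtract the two instances of \eqref{eq:diff}, rearrange using the linearity of $\widetilde c$ in its first argument so that the self-term carries the velocity $\bw_2\in\bX^g$, invoke the positivity \eqref{eq:ct-pos} and the coercivity \eqref{eq:coer-c}, and then apply the continuity bound \eqref{eq:continuity-widetilde_c} together with the a~priori estimate \eqref{eq:bound-}. The only cosmetic slip is your use of the symbol $\underline{\alpha}$ for the coercivity constant of $c(\cdot,\cdot)$; in the paper $\underline{\alpha}$ is reserved for the constant of $a(\cdot,\cdot)$, so you should use a separate constant (depending on $D_0$ and the Poincar\'e constant) here.
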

	\begin{proof}
		Consider $\bw_1,\bw_2\in \bX^g$ and the unique solutions $\theta_1,\theta_2\in Z$ of \eqref{eq:diff} associated with $\bw_1$ and $\bw_2$, respectively. Since 
		$\bw_2\in \bX^g$, we have that $\widetilde{c}(\bw_2; \tau, \tau) \geq 0$ (see \eqref{eq:ct-pos}). Let us now 
		subtract the resulting problems defined by $\tilde{S}$. This gives  
		\[c(\theta_1-\theta_2,\tau)  + \widetilde{c}(\bw_1; \theta_1, \tau) - \widetilde{c}(\bw_2; \theta_2, \tau) = 0 \quad \forall \tau \in Z_0. \]
		Then, adding and subtracting $\widetilde{c}(\bw_2; \theta_1, \tau)$ and taking $\tau=\theta_1-\theta_2$ (which is in $Z_0$ since both $\theta_1,\theta_2$ are in $Z$), we obtain 
		\begin{linenomath}
			\begin{align*}
				\|\tilde{S}(\bw_1)-\tilde{S}(\bw_2)\|^2_{1,\Omega}& = \|\theta_1-\theta_2\|^2_{1,\Omega} \lesssim c(\theta_1-\theta_2, \theta_1-\theta_2)\\
				& =  - \widetilde{c}(\bw_1-\bw_2; \theta_1, \theta_1 - \theta_2) -\widetilde{c}(\bw_2; \theta_1-\theta_2, \theta_1-\theta_2)\\
				&\leq |  \widetilde{c}(\bw_1-\bw_2; \theta_1, \theta_1-\theta_2)| - \widetilde{c}(\bw_2; \theta_1-\theta_2, \theta_1-\theta_2) \\
				& \leq \|\bw_1-\bw_2\|_{1,\Omega}\|\theta_1\|_{1,\Omega}\|\theta_1-\theta_2\|_{1,\Omega}\\
				& \lesssim  \|\bw_1-\bw_2\|_{1,\Omega}\|\theta_1-\theta_2\|_{1,\Omega},
			\end{align*}
		\end{linenomath}
		where we have used \eqref{eq:coer-c}, then \eqref{eq:continuity-widetilde_c}, and in the last step we invoked \eqref{eq:bound-} applied to the unique solution $\theta_1$ of \eqref{eq:diff}, together with  assumption \eqref{eq:assumption-data}. The Lipschitz constant $L_{\tilde{S}}$ depends on the Sobolev embedding  constant and on $R_1$. 
	\end{proof}
	
	In summary, from Lemmas \ref{lem:S-lip}-\ref{lem:S-tild-lip}, we can assert that, for $\bw_1,\bw_2,\bu_1,\bu_2\in \bH$ such that $\bu_1 = T(\bw_1)$ and $\bu_2 = T(\bw_2)$, there holds  
	\[ \|T(\bw_1) - T(\bw_2)\|_{1,\Omega} = \| S_I(\tilde{S}(\bw_1)) - S_I(\tilde{S}((\bw_2)) \|_{1,\Omega} \leq L_T \|\bw_1-\bw_2\|_{1,\Omega}, \]
	where $L_T  = \max\{ L_S,  L_{\tilde{S}}\} >0$. Finally, assuming sufficiently small data such that $L_T <1$,  the Banach fixed-point theorem gives the existence and uniqueness of solution to \eqref{eq:fixed-point} and, equivalently, to \eqref{eq:cross-flow-fv1}.

	\section{Finite element formulation}\label{sec:fe}
	In this section we propose a divergence-free FEM  to  approximate  problem \eqref{eq:cross-flow-fv1}. This character is required since we have used the flow incompressibility to write \eqref{eq:rewrite-convection-diffusion}. In the following we discuss all properties and stability of the method. Let us consider a shape-regular family of partitions of $\O$, denoted by $\CT_h$. While the continuous analysis has been conducted for the 2D case, we stress that the construction of the numerical method also holds in 3D. We assume that the approximations $\O_h$ of the domain $\O$ is partitioned in simplices such that for $n=2$ we have triangles, whereas tetrahedrons are considered if $n=3$.  We denote by $\Gamma_{m,h}$ to the approximation of the membrane boundary $\Gm$. Let $h_K$ be the diameter of the element $K\in\CT_h$, and let us define $h:=\max\{h_K\,:\, K\in \CT_h\}$. For the sake of uniformity, in the following we shall use the same notation to denote the FE spaces irrespective of the specific scheme.

	\subsection{Divergence-conforming approximation}\label{sec:div-conforming}
 
	For each $K$, we denote by $\bn_K$ a the unit normal vector on its boundary, which we denote by $\partial K$. We define $\CE_h:=\CE_I\cup\CE_\partial$ as the set of all facets in $\CT_h$, where $\CE_I$ is the set of all the interior facets, and $\CE_\partial$ corresponds to the set of all boundary facets in $\CT_h$. 	We define $\CE_{D}:=\CE_{\mathrm{in}}\cup\CE_{\mathrm{wall}}$, where $\CE_{\mathrm{in}}$ denotes the set of facets on the inlet $\Gi$, and $\CE_{\mathrm{wall}}$ the set of facets on the wall $\Gw$. The set that contains the facets along $\Gm$ is denoted by $\CE_{\mathrm{m}}$, and $\CE_{\mathrm{out}}$ denotes the set of facets along $\Go$. Then, we have $\CE_\partial=\CE_{D}\cup\CE_{\mathrm{m}}\cup\CE_{\mathrm{out}}$. Finally, the diameter of a given facet $e$ is denoted by $h_e$. 
 Let $K^+$ and $K^-$ be two adjacent elements on $\CT_h$, and $e:=\partial K^+ \cap \partial K^- \in \CE_I$. Given a piece-wise smooth vector-valued function $\bv$ and a matrix-valued function $\btau$, we denote by $\bv^{\pm}$ and $\btau^{\pm}$ the traces of $\bv$ and $\btau$ on the facet $e$ taken from the interior of $K^{\pm}$. Then, the jump and average for $\bv$ and $\btau$ on the facet $e$, respectively, are defined by
	\begin{equation*}
\jump{\bv\otimes\bn_e}:=\bv^+\otimes\bn_{e}^+ + \bv^-\otimes\bn_{e}^-,\qquad \mean{\btau}:=\frac{1}{2}\left(\btau^+ +\btau^- \right),
	\end{equation*}
	where the operator $\otimes$ denotes the vector product tensor $[\bu\otimes\bn]_{ij}=u_i\,n_j,\; 1\leq i,j\leq n$. 
	If $e\in \CE_B$, then we set $\jump{\bv\otimes\bn}=\bv\otimes\bn$ and $\mean{\btau}=\btau$, where $\bn$ is the unit outward normal vector to $\partial\Omega$.

    In the following we specify families of conforming and nonconforming schemes.
	\subsubsection{Divergence-conforming spaces}\label{subsec:Div-free-BDM}
	Given $k\geq0$, we define the FE spaces $\bH_h$, $Q_h$, $W_h$ and $Z_h$ for the velocity, pressure, Lagrange multiplier, and concentration, respectively, by
	\[
	\begin{aligned}
		\bH_h& :=\left\{\bv_h\in \H(\div,\O)\;:\; \bv_h\vert_K\in[\mathbb{P}_{k+1}(K)]^2,\,K\in\CT_h ,
		\ (\bv_h\cdot \bn)|_{e\in\CE_{\mathrm{in}}} = \widehat{u},\ (\bv_h\cdot \bn)|_{e\in\CE_{\mathrm{wall}}} = 0 \right\},\\
        \bH_{h,0}& :=\left\{\bv_h\in \H(\div,\O)\;:\; \bv_h\vert_K\in[\mathbb{P}_{k+1}(K)]^2,\,K\in\CT_h ,{ \ (\bv_h\cdot \bn)|_{e\in\CE_{\mathrm{D}}} =  0} \right\},\\
		Q_h &:=\left\{q_h\in L^2(\Omega)\;:\; q_h\vert_K\in\mathbb{P}_k(K),\,K\in\CT_h \right\},\\
		W_h &:=\left\{\xi_h\in {\H^{-1/2}(\Gm)} 
		\;:\; \xi_h\vert_{\overline{\Gamma_j}}\in\mathbb{P}_k(\overline{\Gamma_j}),\, j=1,\ldots,n_{\mathcal{E}_m} \right\},\\
		Z_h & :=\left\{\tau_h\in Z\cap C(\overline{\O})\;:\; \tau_h\vert_K\in\mathbb{P}_{k+1}(K),\,K\in\CT_h \right\}.
	\end{aligned}\]
	Here, $\mathbb{P}_r(\mathcal{O})$, for $r\geq0$, denotes the space of piecewise polynomials of degree less than or equal to $r$ defined on the entity $\mathcal{O}$, and $\widehat{u}\in\mathbb{P}_{r+1}(\Gi)$ is an interpolation of $\buin\cdot\bn$.
	For the discrete space of the Lagrange multiplier, we consider a triangulation of $\Gm$ given by $\{\Gamma_j\}_{j=1}^{n_{\mathcal{E}_m}}$, where $n_{\mathcal{E}_m}$ corresponds to the number of facets in $\Gm$.  The discrete velocity space is nonconforming in $\bH$, and correspond to the well-known divergence-conforming BDM elements family (denoted by $\mathbb{BDM}_{k+1}$) (see \cite{brezzi1991variational}). 
We end this section defining a lowest-order divergence-free $\bH^1(\Omega)$-nonconforming scheme. For $k=1$ we consider 
    \[
	\begin{aligned}
        \mathbf{CR}_h&:=\{\bv\in \bL^2(\Omega)\;:\; \bv\vert_K\in [\mathbb{P}_1(K)]^2, K\in\CT_h, \int_e \jump{\bv} =0, \forall e\in\CE_I\}, \\
		\bH_h& :=\left\{\bv_h\in \H(\div,\O)\;:\; \bv\in\mathbf{CR}_h  ,{ \ (\bv_h\cdot \bn)|_{e\in\CE_{\mathrm{in}}} = \widehat{u},\ (\bv_h\cdot \bn)|_{e\in\CE_{\mathrm{wall}}} = 0} \right\},\\
        \bH_{h,0}& :=\left\{\bv_h\in \H(\div,\O)\;:\; \bv\in\mathbf{CR}_h ,{ \ (\bv_h\cdot \bn)|_{e\in\CE_{\mathrm{D}}} =  0} \right\},\\
		Q_h &:=\left\{q_h\in L^2(\Omega)\;:\; q\vert_K\in\mathbb{P}_0(K),\,K\in\CT_h \right\},\\
		W_h &:=\left\{\xi_h\in {\H^{-1/2}(\Gm)}  
		\;:\; \xi\vert_{\overline{\Gamma_j}}\in\mathbb{P}_0(\overline{\Gamma_j}),\, j=1,\ldots,n_{\mathcal{E}_m} \right\},\\
		Z_h & :=\left\{\tau_h\in Z\cap C(\overline{\O})\;:\; \tau\vert_K\in\mathbb{P}_{1}(K),\,K\in\CT_h \right\}.
	\end{aligned}\]
Here, the space $\mathbf{CR}_h$ is nonconforming in $\bH^1(\Omega)$ and it corresponds to the lowest-order CR elements (denoted by $\mathbb{CR}_{1}$) which is coupled with piecewise constants for the pressure (see \cite{crouzeix1973conforming,girault1979finite}).

 \subsubsection{The discrete div-conforming problem}\label{subsec:BDM-RT-model-and-properties}
 As the discrete velocity now lives in $\H(\div,\O)$ and its normal trace is in $\H^{-1/2}(\partial\Omega)$, the pairings on $\Gm$ from \eqref{pairing-1},\eqref{pairing-2} suggest a discrete Lagrange multiplier space  conforming with $\H^{1/2}(\Gm)$ instead of $\H^{-1/2}(\Gm)$. In that case, in \eqref{pairing-2} one should use $\mathcal{R}^{-1}_{1/2}(g(\theta))$ instead of $g(\theta)$ (where $\mathcal{R}_{1/2}$ denotes the Riesz map between $\H^{-1/2}(\Gm)$ and its dual). However, we maintain  $W_h$ defined as conforming with $\H^{-1/2}(\Gm)$ as in the previous section. We bear in mind that the off-diagonal bilinear form (to be denoted $b_h(\cdot,\cdot)$ in \eqref{eq:bh} below) is therefore slightly different, needing   the Riesz representative  of the Lagrange multiplier.

	The remaining spaces $Q_h$ and $Z_h$ are conforming in $Q$ and $Z$, respectively. 
	With this choice of spaces, we introduce  discontinuous versions of the bilinear forms $a(\cdot,\cdot),b(\cdot,\cdot)$ and the trilinear form $\widetilde{a}(\cdot;\cdot,\cdot)$. For the first, we follow the symmetric interior penalty form given by 
	\begin{equation}
		\label{eq:bilinear_form_ah}
		\begin{aligned}
			a_h(\bu,\bv):=&\,\mu_0\int_{\O}\nabla_h\bu:\nabla_h\bv - \sum_{e\in\mathcal{E}_I \cup \CE_D}\mu_0\int_e\mean{\nabla_h\bu } :\jump{\bv\otimes\bn_e}\\
			&-\sum_{e\in\mathcal{E}_I \cup \CE_D}\mu_0\int_e\mean{\nabla_h\bv } :\jump{\bu\otimes\bn_e} + \sum_{e\in\mathcal{E}_I \cup \CE_D}\frac{\alpha_0}{h_e}\mu_0\int_e\jump{\bu\otimes\bn_e} :\jump{\bv\otimes\bn_e},
		\end{aligned}
	\end{equation}
	where $\alpha_0>0$ is the stabilisation parameter. The broken gradient operator $\nabla_h$ is defined by $\nabla_h\bu =\nabla (\bu\vert_K)$ for all $K\in\CT_h$.
	
	For the off-diagonal bilinear form we use the same functional form as $b(\cdot,\cdot)$ but the spaces are different due to the different pairings discussed above 
	\begin{equation}\label{eq:bh}
 b_h(\bv,(q,\xi)) : = -\int_\Omega q \nabla\cdot\bv+
{\langle \bv\cdot\bn,\mathcal{R}_{1/2}\xi\rangle}_{\Gamma_{m}}
 \qquad \forall \bv\in \bH_h,(q,\xi)\in Q_h\times W_h.\end{equation}
	
	For the convection term, we follow an upwind scheme (see for example \cite{buerger2019h})  defined by
	\begin{equation}
		\widetilde{a}_h(\bw;\bu,\bv)=\rho_0\int_\Omega (\bw\cdot\nabla_h\bu)\cdot\bv+ \frac{\rho_0}{2}\sum_{e \in \CE_I}\int_{ e} (\bw\cdot\bn_e - |\bw\cdot\bn_e|)(\bu^+ - \bu)\cdot\bv, \label{eq:upwind}
	\end{equation}
	where $\bu^+$ is the upwind trace of $\bu$. If $\bw\in \bH_{h,0}^0$, then the following property holds:
 $$
\widetilde{a}_h(\bw;\bu,\bu) = \frac{\rho_0}{2}\sum_{e \in \CE_I}\int_{ e}  |\bw\cdot\bn_e|[\![\bu]\!]^2 \ge 0, \quad \forall \bu\in\bH_{h,0}.
 $$

	The remaining discrete bilinear forms are the same as in Section \ref{subsec:weak_formulation}. 	Then, the resulting discrete formulation consists in finding $(\bu_h,p_h,\lambda_h,\theta_h)\in \bH_h\times Q_h\times W_h\times Z_h$ such that
	\begin{equation}
		\label{eq:cross-flow-fv1-discrete-nonconforming}
		\begin{aligned}
			a_h(\bu_h,\bv_h)+ \widetilde{a}_h(\bu_h;\bu_h,\bv_h) + b_h(\bv_h,(p_h,\lambda_h)) &=F(\bv_h) ,\\
			b_h(\bu_h,(q_h,\xi_h)) &= 
  {\langle\xi_h, g(\theta_h)\rangle_{\Gm}},\\
			c(\theta_h,\tau_h) + \widetilde{c}(\bu_h;\theta_h,\tau_h)&=0,
		\end{aligned}
	\end{equation}
	for all $(\bv_h,q_h,\xi_h,\tau_h)\in {\bH_{h,0}} \times Q_h\times W_h\times Z_h$, where $F(\bv_h)$ is given by
 $$
 {F(\bv_h):=-\sum_{e\in\CE_{\mathrm{in}}}\mu_0\int_e\mean{\nabla_h\bv } :\jump{\bu_{\mathrm{in}}\otimes\bn_e} + \sum_{e\in\CE_{\mathrm{in}}}\frac{\alpha_0}{h_e}\mu_0\int_e\jump{\bu_{\mathrm{in}}\otimes\bn_e} :\jump{\bv\otimes\bn_e}.}
 $$


Given $\bv\in\bH$, we define the broken $\bH_h$-norm as 
	\begin{equation}
		\label{eq:broken-H1-norm}
		\Vert \bv\Vert_{1,h}^ 2:= \Vert \bv\Vert_{0,\O}^2 + \Vert \nabla_h \bv\Vert_{0,\O}^2 + \sum_{e\in\mathcal{E}_I\cup\mathcal{E}_D}h_e^{-1}\Vert\jump{\bv\otimes\bn_e}\Vert^2_{0,e}. 
	\end{equation}
 
 \begin{lemma}
  The following bounds hold true
	\begin{linenomath}
	\begin{subequations}
		\begin{align}
			|\widetilde{a}(\bw;\bu,\bv)|&\lesssim \Vert\bw\Vert_{1,h}\,\Vert\bu\Vert_{1,h}\,\Vert\bv\Vert_{1,h},&\bw,\bu \in \bH^1(\mathcal{T}_h),\bv\in \bH_h,\label{eq:discrete-widetilde_a}\\
			|\widetilde{c}(\bw;\theta,\tau)|&\lesssim \Vert\bw\Vert_{1,h}\,\Vert\tau\Vert_{1,\Omega}\, \,\Vert\theta\Vert_{1,\Omega},&\bw\in\bH^1(\mathcal{T}_h),\theta,\tau\in Z_h\label{eq:discrete-widetilde_c},
\end{align}\end{subequations}\end{linenomath}
where
$$ 
\bH^1(\mathcal{T}_h) := \{\bv\,| \,\forall K\in\mathcal{T}_h,\; \bv\in \bH^1(K)\}.  $$
\end{lemma}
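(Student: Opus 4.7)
The plan is to reduce both bounds to the classical Hölder inequality combined with two standard tools for broken Sobolev spaces: the discrete Sobolev embedding $\|\bv\|_{\L^4(\Omega)} \lesssim \|\bv\|_{1,h}$ valid for any $\bv\in \bH^1(\mathcal{T}_h)$, and a discrete trace inequality of the form $\|\bv\|_{\L^4(\partial\Omega)} \lesssim \|\bv\|_{1,h}$, both of which can be found, e.g., in the Di~Pietro--Ern monograph. For the conforming factors ($\theta,\tau\in Z_h\subset \H^1(\Omega)$) we will use the classical Sobolev embedding $\H^1(\Omega)\hookrightarrow \L^4(\Omega)$ and the standard trace inequality $\H^1(\Omega)\hookrightarrow \L^4(\partial\Omega)$.

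For \eqref{eq:discrete-widetilde_a}, I would write
\begin{equation*}
|\widetilde{a}(\bw;\bu,\bv)| \;=\; \rho_0\Bigl|\int_\Omega (\bw\cdot \nabla_h \bu)\cdot \bv\Bigr| \;\leq\; \rho_0\,\|\bw\|_{\L^4(\Omega)}\,\|\nabla_h \bu\|_{\mathbf{L}^2(\Omega)}\,\|\bv\|_{\L^4(\Omega)}.
\end{equation*}
Noting that $\|\nabla_h\bu\|_{\mathbf{L}^2(\Omega)}\leq \|\bu\|_{1,h}$ directly from the definition \eqref{eq:broken-H1-norm}, and applying the discrete Sobolev embedding to the first and third factors, the bound follows. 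Note that the original trilinear form $\widetilde{a}$ uses $\nabla$; here, since $\bu\in \bH^1(\mathcal{T}_h)$, this is meant in the broken sense $\nabla_h$, which is consistent with \eqref{eq:upwind} when restricted to the volume term.

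For \eqref{eq:discrete-widetilde_c}, I would split
\begin{equation*}
\widetilde{c}(\bw;\theta,\tau) \;=\; -\int_\Omega \theta\,(\bw\cdot \nabla \tau) \;+\; \int_{\Gamma_{\mathrm{out}}}\theta\,(\bw\cdot\bn)\,\tau,
\end{equation*}
and estimate each piece separately. The volume term is bounded by $\|\theta\|_{\L^4(\Omega)}\|\bw\|_{\L^4(\Omega)}\|\nabla\tau\|_{\mathbf{L}^2(\Omega)}$, so that the continuous Sobolev embedding handles $\theta,\tau$ and the discrete one handles $\bw$. For the boundary term, applying Hölder's inequality on $\Gamma_{\mathrm{out}}$ and extending the integration to $\partial\Omega$, we get the estimate
\begin{equation*}
\Bigl|\int_{\Gamma_{\mathrm{out}}}\theta\,(\bw\cdot\bn)\,\tau\Bigr| \;\leq\; \|\theta\|_{\L^4(\partial\Omega)}\,\|\bw\|_{\L^2(\partial\Omega)}\,\|\tau\|_{\L^4(\partial\Omega)},
\end{equation*}
and we close using the classical trace embedding $\H^1(\Omega)\hookrightarrow \L^4(\partial\Omega)$ for $\theta$ and $\tau$, together with the discrete trace inequality $\|\bw\|_{\L^2(\partial\Omega)}\lesssim \|\bw\|_{1,h}$ for the broken factor.

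The only delicate point is the discrete Sobolev/trace inequality applied to a generic element of $\bH^1(\mathcal{T}_h)$ rather than to a discrete finite element function, but this extension is standard once the norm $\|\cdot\|_{1,h}$ incorporates the jump penalty on the skeleton, as in \eqref{eq:broken-H1-norm}; the result carries over to any piecewise $\H^1$ function by the same density/trace arguments used in DG analysis.
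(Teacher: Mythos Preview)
Your proof is correct and follows essentially the same approach as the paper: H\"older's inequality with the $4$--$2$--$4$ split combined with the discrete Sobolev embedding $\|\cdot\|_{\L^4(\Omega)}\lesssim\|\cdot\|_{1,h}$ (the paper cites Buffa--Ortner for this, you cite Di~Pietro--Ern). Your treatment of \eqref{eq:discrete-widetilde_c} is more explicit than the paper's, which simply says ``Similarly, the second result follows,'' and your cautionary remark about applying the embedding on the full broken space $\bH^1(\mathcal{T}_h)$ is a fair point that the paper does not discuss.
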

\begin{proof}
Using H\"older's inequality and the embedding result discussed in \cite{buffa2009compact}  gives 
$$
    |\widetilde{a}(\bw;\bu,\bv)|\lesssim \Vert\bw\Vert_{\bL^4(\Omega)}\,\Vert\bu\Vert_{1,h}\,\Vert\bv\Vert_{\bL^4(\Omega)}\leq\Vert\bw\Vert_{1,h}\,\Vert\bu\Vert_{1,h}\,\Vert\bv\Vert_{1,h}. 
$$
Similarly, the second result follows.
\end{proof}

	\section{Well-posedness of the divergence-conforming discrete problem}\label{sec:discrete-wellp}
	In this section we discuss the uniqueness and stability of the discrete solution to \eqref{eq:cross-flow-fv1-discrete-nonconforming}. The proof of the existence of a solution to \eqref{eq:cross-flow-fv1-discrete-nonconforming} follows exactly as in the continuous case addressed in Section \ref{sec:wellp}.
    
We begin by showing the ellipticity of the discrete bilinear forms $a_h(\cdot,\cdot)$ and $c(\cdot,\cdot)$. 
       \begin{lemma}\label{lem:B-discoer}
        There holds:
        $$
            a_h(\bv,\bv)\gtrsim \Vert\bv\Vert_{1,h}^2\quad \forall \bv\in {\bH_{h,0}}
            \quad \mbox{and}\quad c(\tau,\tau)\gtrsim \Vert\tau\Vert_{1,\Omega}^2\quad \forall \tau\in Z_h.
        $$
        \end{lemma}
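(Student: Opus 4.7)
The plan is to treat the two bilinear forms separately, since $c(\cdot,\cdot)$ is defined on a conforming subspace $Z_h\subset Z$, while $a_h(\cdot,\cdot)$ is a symmetric interior penalty (SIPG) form on the nonconforming space $\bH_{h,0}$.

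For the concentration form, the argument is immediate: since $Z_h\subset Z$ and elements of $Z_0$ vanish on $\Gi$ (of positive $(n-1)$-measure), the scalar Poincaré--Friedrichs inequality applies, giving $\|\tau\|_{0,\Omega}\lesssim \|\nabla\tau\|_{0,\Omega}$ for all $\tau\in Z_h\cap Z_0$. Combining this with the identity $c(\tau,\tau)=D_0\|\nabla\tau\|_{0,\Omega}^{2}$ yields the claim $c(\tau,\tau)\gtrsim\|\tau\|_{1,\Omega}^{2}$. The implicit constant depends on $D_0$ and on the Poincaré constant.

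For the SIPG form I would follow the by-now classical strategy (see, e.g., Di Pietro--Ern or Arnold--Brezzi--Cockburn--Marini): plug $\bu=\bv=\bv$ into \eqref{eq:bilinear_form_ah} so that the two consistency terms coincide, yielding
\[
a_h(\bv,\bv)=\mu_0\|\nabla_h\bv\|_{0,\Omega}^{2}
-2\mu_0\!\!\sum_{e\in\mathcal{E}_I\cup\mathcal{E}_D}\!\!\int_e\mean{\nabla_h\bv}:\jump{\bv\otimes\bn_e}
+\mu_0\!\!\sum_{e\in\mathcal{E}_I\cup\mathcal{E}_D}\!\!\frac{\alpha_0}{h_e}\|\jump{\bv\otimes\bn_e}\|_{0,e}^{2}.
\]
The consistency term is then controlled by Cauchy--Schwarz, a discrete trace inequality $\|\mean{\nabla_h\bv}\|_{0,e}^{2}\lesssim h_e^{-1}\|\nabla_h\bv\|_{0,\omega_e}^{2}$ (where $\omega_e$ denotes the mesh-neighbourhood of $e$), and Young's inequality with a small parameter $\delta>0$:
\[
\Bigl|2\mu_0\sum_e\int_e\mean{\nabla_h\bv}:\jump{\bv\otimes\bn_e}\Bigr|
\le \delta\mu_0\|\nabla_h\bv\|_{0,\Omega}^{2}
+ \frac{C_{\mathrm{tr}}\mu_0}{\delta}\sum_e h_e^{-1}\|\jump{\bv\otimes\bn_e}\|_{0,e}^{2}.
\]
Choosing $\delta<1$ and $\alpha_0>C_{\mathrm{tr}}/\delta$ produces strictly positive coefficients in front of both $\|\nabla_h\bv\|_{0,\Omega}^{2}$ and the jump seminorm, so that
\[
a_h(\bv,\bv)\gtrsim \|\nabla_h\bv\|_{0,\Omega}^{2}+\sum_{e\in\mathcal{E}_I\cup\mathcal{E}_D}h_e^{-1}\|\jump{\bv\otimes\bn_e}\|_{0,e}^{2}.
\]
Finally, to recover the full $\|\cdot\|_{1,h}$-norm \eqref{eq:broken-H1-norm} I would invoke a discrete Poincaré--Friedrichs inequality for broken Sobolev spaces with boundary condition enforced on $\mathcal{E}_D$ (in particular on $\mathcal{E}_{\mathrm{in}}$, which has positive measure), which bounds $\|\bv\|_{0,\Omega}^{2}$ by $\|\nabla_h\bv\|_{0,\Omega}^{2}+\sum_e h_e^{-1}\|\jump{\bv\otimes\bn_e}\|_{0,e}^{2}$ on $\bH_{h,0}$.

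The main obstacle, and the only place where care is needed, is choosing $\alpha_0$ large enough so that the consistency contributions are absorbed; this is where the discrete trace constant $C_{\mathrm{tr}}$ appears explicitly, and it is the standard threshold condition common to all SIPG analyses. Once $\alpha_0$ is fixed above that threshold, the argument is routine.
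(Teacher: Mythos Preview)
Your proof is correct and follows the same underlying approach as the paper: for $c(\cdot,\cdot)$ you invoke the scalar Poincar\'e--Friedrichs inequality exactly as the paper does via \eqref{eq:coer-c}, and for $a_h(\cdot,\cdot)$ you spell out in full the standard SIPG coercivity argument (discrete trace inequality, Young's inequality, threshold on $\alpha_0$, discrete Poincar\'e), whereas the paper simply cites \cite[Prop.~10]{hansbo2002discontinuous} for that very argument. Your remark that the coercivity for $c$ should really be stated on $Z_h\cap Z_0$ rather than on the affine space $Z_h$ is more careful than the paper's own statement.
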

        \begin{proof}
        The first bound directly follows from \cite[Prop. 10]{hansbo2002discontinuous}. Using (\ref{eq:coer-c}) gives the second estimate.
        \end{proof}
        
             \begin{lemma}\label{lem:B-discoerinfsup}
        There holds:
        $$
           \sup_{\bv\in {\bH_{h,0}},\bv\neq\boldsymbol{0}}\frac{
			b_{h,1}(\bv,q) }{\Vert\bv\Vert_{1,h}}\gtrsim \Vert q\Vert_{0,\Omega} \quad
   \mbox{and} \quad
   \sup_{\bv\in {\bH_{h,0}},\bv\neq\boldsymbol{0}}\frac{
			b_{h,2}(\bv,\xi) }{\Vert\bv\Vert_{1,h}}\gtrsim \Vert \xi\Vert_{-1/2,\Gamma_{m,h}},
        $$
        for all $q\in Q_{h}$ and for all $\xi\in W_{h}$, where
        $$
        b_{h,1}(\bv,q):= -\int_\Omega q \nabla\cdot\bv, \qquad b_{h,2}(\bv,\xi):= {\langle \bv\cdot\bn,\mathcal{R}_{1/2}\xi\rangle}_{\Gamma_{m}}.
        $$
        \end{lemma}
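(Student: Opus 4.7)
The plan is a Fortin-operator argument adapted to the $\mathbb{BDM}_{k+1}$--$\mathbb{P}_k$ pair, with the second inf-sup requiring extra care due to the $H^{-1/2}$-valued membrane Lagrange multiplier. The common ingredient is the standard BDM Fortin interpolant $\bPi_h:\bH^1(\O)\to\bH_h$ which (i) maps $\bH^1(\O)\cap\bH_0$ into $\bH_{h,0}$ by preserving zero normal traces on $\Gi\cup\Gw$, (ii) weakly preserves divergence on each element, i.e.\ $\int_K q_h\,\div(\bPi_h\bv-\bv)=0$ for all $q_h\in\mathbb{P}_k(K)$, (iii) weakly preserves normal-trace moments on each facet, i.e.\ $\int_e (\bPi_h\bv-\bv)\cdot\bn\,p=0$ for all $p\in\mathbb{P}_{k+1}(e)$, and (iv) is stable in the broken $H^1$-norm, $\|\bPi_h\bv\|_{1,h}\lesssim\|\bv\|_{1,\O}$.

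For the first inf-sup, given $q\in Q_h\subset Q$, I invoke the continuous surjectivity of $\div:\bH_0\to Q$ (standard for the present mixed boundary configuration with a do-nothing outlet) to find $\tilde\bv\in\bH_0\cap\bH^1(\O)$ with $\div\tilde\bv=-q$ and $\|\tilde\bv\|_{1,\O}\lesssim\|q\|_{0,\O}$. Applying $\bPi_h$, properties (i), (ii) and (iv) yield $b_{h,1}(\bPi_h\tilde\bv,q)=\|q\|_{0,\O}^2$ and $\|\bPi_h\tilde\bv\|_{1,h}\lesssim\|q\|_{0,\O}$, and dividing closes the first inequality.

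For the second inf-sup, I mimic the continuous construction of Lemma~\ref{lem:B-inf}. Given $\xi\in W_h$, set $\tilde\xi:=\mathcal{R}_{1/2}\xi\in H^{1/2}(\Gm)$, which by the Riesz isometry satisfies $\|\tilde\xi\|_{1/2,\Gm}=\|\xi\|_{-1/2,\Gm}$. Solve the auxiliary Stokes problem \eqref{aux1} with $q=0$ and Dirichlet data $\tilde\xi\bn$ on $\Gm$ to produce $\hat\bv\in\bH_0\cap\bH^1(\O)$ with $\hat\bv\cdot\bn=\tilde\xi$ on $\Gm$, $\div\hat\bv=0$ in $\O$, and $\|\hat\bv\|_{1,\O}\lesssim\|\xi\|_{-1/2,\Gm}$, as in \eqref{v1a}--\eqref{v1b}. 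I then take $\bv_h:=\bPi_h\hat\bv+\boldsymbol{\delta}_h\in\bH_{h,0}$, where $\boldsymbol{\delta}_h$ is a localized correction forcing $\bv_h\cdot\bn=\xi$ on $\Gm$ exactly, a permissible choice because $\xi|_e\in\mathbb{P}_k(e)\subset\mathbb{P}_{k+1}(e)$ lies in the BDM normal-trace space. The main-term estimate $\|\bPi_h\hat\bv\|_{1,h}\lesssim\|\xi\|_{-1/2,\Gm}$ comes from (iv), while $\boldsymbol{\delta}_h$ is estimated using the first inf-sup together with standard local scaling. The defining Riesz identity $\langle\bw\cdot\bn,\mathcal{R}_{1/2}\xi\rangle_{\Gm}=(\bw\cdot\bn,\xi)_{H^{-1/2}(\Gm)}$ then evaluates $b_{h,2}(\bv_h,\xi)=(\xi,\xi)_{H^{-1/2}(\Gm)}=\|\xi\|_{-1/2,\Gm}^2$, and dividing by $\|\bv_h\|_{1,h}$ closes the second inequality.

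The main technical obstacle is the construction and control of the correction $\boldsymbol{\delta}_h$: since $\bPi_h\hat\bv$ has normal trace on each facet of $\Gm$ equal to the piecewise $L^2$-projection of $\tilde\xi$ onto $\mathbb{P}_{k+1}$ rather than to $\xi$ itself, the correction must be engineered so that its broken $H^1$-norm is bounded by $\|\xi\|_{-1/2,\Gm}$ without picking up any inverse power of $h$. The argument hinges on the interplay between the Riesz isometry, the $L^2$-projection onto piecewise polynomials on $\Gamma_{m,h}$, and the BDM normal-trace moment structure, and constitutes the delicate step of the proof.
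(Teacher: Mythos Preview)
Your treatment of the first inf-sup via the BDM Fortin operator is correct and is essentially what the paper does (it simply cites \cite{hansbo2002discontinuous}).

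For the second inf-sup, however, there is a genuine gap. First, note that the norm on the right-hand side is the \emph{mesh-dependent} norm $\|\xi\|_{-1/2,\Gamma_{m,h}}^2=\sum_{e}h_e\|\xi\|_{0,e}^2$ (defined in the paper immediately after Lemma~\ref{lem:B-disinf}), not the continuous $H^{-1/2}(\Gm)$-norm you work with throughout. Even granting the usual equivalence of these two norms on $W_h$ for quasi-uniform meshes, your construction still breaks down at the correction step you yourself flag as ``delicate''. The normal trace of $\bPi_h\hat\bv$ on a facet $e\subset\Gamma_{m,h}$ is the $L^2(e)$-projection of $\tilde\xi=\mathcal{R}_{1/2}\xi$ onto $\mathbb{P}_{k+1}(e)$, so $\boldsymbol\delta_h\cdot\bn|_e=\xi|_e-P_e\tilde\xi$. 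Controlling $\|\boldsymbol\delta_h\|_{1,h}$ by local scaling forces you to bound $\sum_e h_e^{-1}\|\xi-P_e(\mathcal{R}_{1/2}\xi)\|_{0,e}^2$ by $\|\xi\|_{-1/2,\Gm}^2$. But $\xi$ and its Riesz representative $\mathcal{R}_{1/2}\xi$ are \emph{pointwise unrelated}: the Riesz map is a nonlocal smoothing operator, and there is no mechanism forcing $\xi|_e\approx P_e(\mathcal{R}_{1/2}\xi)$ in an $h^{-1}$-weighted $L^2$ sense. A single bump $\xi=\mathbf 1_e$ already shows the left side scales like $h_e^{-1}\cdot h_e=1$ while the right side scales like $h_e$. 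Invoking ``the first inf-sup together with standard local scaling'' does not help here, as the first inf-sup controls divergence, not boundary normal-trace mismatch.

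The paper does not attempt a direct construction; it simply records that the second inf-sup ``can be done similarly to \cite[Corollary~3.5]{kashiwabara2019penalty}''. That reference works directly with the mesh-weighted norm and a localized discrete lifting (choosing $\bv_h\cdot\bn|_e$ proportional to $h_e\xi|_e$, which yields $\|\bv_h\|_{1,h}\sim\|\xi\|_{-1/2,\Gamma_{m,h}}$), rather than passing through a continuous auxiliary Stokes problem and a global Riesz map. If you want to salvage your argument, you should abandon the Riesz-based correction and instead build $\bv_h$ directly from facet-wise normal-trace degrees of freedom scaled by $h_e$, and then verify that the resulting pairing $b_{h,2}(\bv_h,\xi)$ is bounded below by $\|\xi\|_{-1/2,\Gamma_{m,h}}^2$; this is where the interaction with $\mathcal{R}_{1/2}$ must be handled, and it is not addressed in your proposal.
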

        \begin{proof}
        The first bound directly follows from \cite[Prop. 10]{hansbo2002discontinuous}. The proof of the  second inf-sup condition can be done similarly to \cite[Corollary 3.5]{kashiwabara2019penalty}. 
        \end{proof}
        
         As a consequence of the above lemma, we have the following result, which proves 
         an inf-sup 
         condition by the bilinear form $b_h(\cdot,(\cdot,\cdot))$.
     	\begin{lemma}\label{lem:B-disinf}
		The following discrete inf-sup condition holds 
		\[
		\sup_{\bv\in {\bH_{h,0}},\bv\neq\boldsymbol{0}} \frac{
			b_h(\bv,(q,\xi)) }
         {\Vert\bv\Vert_{1,h}}
        \gtrsim 
        \Vert q\Vert_{0,\Omega}
        +\Vert \xi \Vert_{-1/2,{\Gamma_{m,h}}} 
        \quad \forall (q,\xi)\in Q_{h}\times W_{h}. 
		\]
	\end{lemma}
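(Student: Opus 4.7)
My plan is to combine the two separate inf-sup conditions from Lemma~\ref{lem:B-discoerinfsup} via a weighted linear combination of test functions. Given $(q,\xi)\in Q_h\times W_h$, I would construct $\bv=\bv_1+t\bv_2 \in \bH_{h,0}$ with $t>0$ to be fixed small, where $\bv_1$ handles the pressure block and $\bv_2$ handles the membrane multiplier block.

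First, I would obtain $\bv_1$ by applying the standard BDM Fortin interpolation to a continuous Stokes lifting $\bv^c\in\bH^1_0(\Omega)$ satisfying $-\nabla\cdot \bv^c = q$ and $\|\bv^c\|_{1,\Omega}\lesssim\|q\|_{0,\Omega}$. This yields $\bv_1\in\bH_{h,0}$ with $-\int_\Omega q\,\nabla\cdot\bv_1 \geq c_1\|q\|^2_{0,\Omega}$ and $\|\bv_1\|_{1,h}\lesssim\|q\|_{0,\Omega}$. Crucially, since $\bv^c\cdot\bn=0$ on all of $\partial\Omega$ and the Fortin operator preserves the normal trace, $\bv_1\cdot\bn=0$ on $\Gm$, so $b_{h,2}(\bv_1,\xi)=0$. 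Second, Lemma~\ref{lem:B-discoerinfsup} applied to $b_{h,2}$ gives $\bv_2\in\bH_{h,0}$ with $b_{h,2}(\bv_2,\xi)\geq c_2\|\xi\|^2_{-1/2,\Gamma_{m,h}}$ and $\|\bv_2\|_{1,h}\lesssim\|\xi\|_{-1/2,\Gamma_{m,h}}$. However, $b_{h,1}(\bv_2,q)$ is sign-indefinite and must be absorbed.

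For $\bv=\bv_1+t\bv_2$, expansion and the vanishing of $b_{h,2}(\bv_1,\xi)$ give
\[
b_h(\bv,(q,\xi)) = b_{h,1}(\bv_1,q) + t\,b_{h,1}(\bv_2,q) + t\,b_{h,2}(\bv_2,\xi).
\]
Cauchy--Schwarz combined with Young's inequality yields $t|b_{h,1}(\bv_2,q)|\leq \tfrac{c_1}{2}\|q\|^2_{0,\Omega}+\tfrac{t^2 C^2}{2c_1}\|\xi\|^2_{-1/2,\Gamma_{m,h}}$, and choosing $t=c_1c_2/C^2$ (or any sufficiently small positive value) produces
\[
b_h(\bv,(q,\xi)) \geq \tfrac{c_1}{2}\|q\|^2_{0,\Omega} + \tfrac{t c_2}{2}\|\xi\|^2_{-1/2,\Gamma_{m,h}}.
\]
Combining this with the triangle-inequality bound $\|\bv\|_{1,h}\lesssim \|q\|_{0,\Omega}+\|\xi\|_{-1/2,\Gamma_{m,h}}$ and the elementary estimate $(a^2+b^2)/(a+b)\geq(a+b)/2$ for $a,b\geq 0$ produces the claimed inf-sup ratio.

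The main obstacle is arranging that the pressure-side test function $\bv_1$ has vanishing normal trace on $\Gm$, so that the membrane pairing $b_{h,2}(\bv_1,\xi)$ drops out and the two inf-sup contributions decouple cleanly; otherwise the expansion would contain a second sign-indefinite cross term $b_{h,2}(\bv_1,\xi)$ that is not obviously absorbable. This is handled by always starting from an $\bH^1_0(\Omega)$ lifting and invoking a Fortin-type interpolator that preserves homogeneous normal trace conditions, such as the BDM one. After that, the argument reduces to a weighted combination of test functions and routine estimates.
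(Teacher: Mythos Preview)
Your strategy---combining the two inf-sup conditions of Lemma~\ref{lem:B-discoerinfsup} via a weighted test function $\bv=\bv_1+t\bv_2$ and arranging that $b_{h,2}(\bv_1,\xi)=0$---is exactly the right idea, and it is the concrete content behind the paper's one-line proof, which simply says ``combining the discrete inf-sup conditions in Lemma~\ref{lem:B-discoerinfsup} implies the stated result.'' The absorption of the cross term $t\,b_{h,1}(\bv_2,q)$ by Young's inequality is standard and correct.

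There is, however, one genuine slip in the construction of $\bv_1$. You invoke a continuous lifting $\bv^c\in\bH^1_0(\Omega)$ with $-\nabla\cdot\bv^c=q$, but in this paper $Q=\L^2(\Omega)$ without a mean-zero constraint (the pressure is fixed via the do-nothing condition on $\Go$). For a generic $q\in Q_h$ the compatibility condition $\int_\Omega q=0$ fails, so no such $\bH^1_0(\Omega)$ lifting exists. The fix is immediate and does not disturb the rest of your argument: take $\bv^c$ in the space $\{\bv\in\bH^1(\Omega):\bv=\boldsymbol{0}\text{ on }\Gi\cup\Gw\cup\Gm\}$ with a natural condition on $\Go$ (exactly as in the auxiliary Stokes problem \eqref{aux1} used for the continuous inf-sup). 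This still gives $\bv^c\cdot\bn=0$ on $\Gm$, the BDM Fortin interpolator still preserves the normal trace facet by facet, and hence $b_{h,2}(\bv_1,\xi)=0$ holds as you need. With this correction your proof goes through.
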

 where 
    \[\Vert \xi\Vert_{-1/2,\Gamma_{m,h}}= (\sum_{e\in\Gamma_{m,h}}h_e\Vert \xi\Vert_{0,e}^2)^{1/2}.\]
	\begin{proof}
    Combining the discrete inf-sup conditions discussed in Lemma \ref{lem:B-discoerinfsup}    implies the stated result. 
    \end{proof}
    In the following result we prove a global inf-sup condition of the linear part of \eqref{eq:cross-flow-fv1-discrete-nonconforming} that shall be useful to ensure the uniqueness and convergence of the discrete solution.
    \begin{lemma}\label{lem:stab-div-11}
    For each $(\bu_h,p_h,\theta_h,\lambda_h)\in {\bH_{h,0}}\times Q_h\times Z_h\times W_h$, there exists $(\bv,q,\tau,\xi)\in {\bH_{h,0}}\times Q_h\times Z_h\times W_h$ with 
    \[\triplenorm{(\bv,q,\tau,\xi)}\lesssim \triplenorm{(\bu_h,p_h,\theta_h,\lambda_h)},\]
    such that 
    \[
     \triplenorm{(\bu_h,p_h,\theta_h,\lambda_h)}^2\lesssim B((\bu_h,p_h,\theta_h,\lambda_h),(\bv,q,\tau,\xi)),   
    \]
    where 
    \[{B(\bu_h,p_h,\theta_h,\lambda_h; \bv,q,\tau,\xi):=a_h(\bu_h,\bv)+b_h(\bv,(p_h,\lambda_h))+b_h(\bu_h,(q,\xi))+c(\theta_h,\tau),}\]
    and
    \[
       \triplenorm{(\bv,q,\tau,\xi)}^2:= \Vert\bv\Vert_{1,h}^2+\|q\|_{0,\Omega}^2+\Vert \xi\Vert_{-1/2,\Gamma_{m,h}}^2+\|\tau\|_{1,\Omega}^2.
    \]
\end{lemma}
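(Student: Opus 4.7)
The goal is a saddle-point style inf-sup for the linear block $B$, which splits naturally into two uncoupled pieces: the Stokes-membrane block $(a_h, b_h)$ acting on $(\bu_h, p_h, \lambda_h)$, and the pure diffusion block $c$ acting on $\theta_h$. Since $B$ contains no term coupling $\theta_h$ to the other unknowns, I will choose the test components independently for these two blocks and then combine.

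\textbf{Choice of test tuple.} Given $(\bu_h, p_h, \theta_h, \lambda_h)$, set $q = -p_h$, $\xi = -\lambda_h$, $\tau = \theta_h$, and $\bv = \bu_h + \delta \bw$, where $\delta>0$ is a small parameter to be fixed, and $\bw \in \bH_{h,0}$ is the realiser of the discrete inf-sup condition from Lemma~\ref{lem:B-disinf} applied to the pair $(p_h, \lambda_h)$, so that
\begin{equation*}
b_h(\bw,(p_h,\lambda_h)) \;\gtrsim\; \bigl(\|p_h\|_{0,\Omega} + \|\lambda_h\|_{-1/2,\Gamma_{m,h}}\bigr)^2, \qquad \|\bw\|_{1,h} \;\lesssim\; \|p_h\|_{0,\Omega} + \|\lambda_h\|_{-1/2,\Gamma_{m,h}}.
\end{equation*}
With these choices, the terms $b_h(\bu_h,(p_h,\lambda_h))$ coming from substituting $\bv=\bu_h$ and the opposite sign contribution from $(q,\xi)=(-p_h,-\lambda_h)$ cancel, yielding
\begin{equation*}
B(\bu_h,p_h,\theta_h,\lambda_h;\bv,q,\tau,\xi) \;=\; a_h(\bu_h,\bu_h) + \delta\, a_h(\bu_h,\bw) + \delta\, b_h(\bw,(p_h,\lambda_h)) + c(\theta_h,\theta_h).
\end{equation*}

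\textbf{Lower bound.} I apply Lemma~\ref{lem:B-discoer} to bound $a_h(\bu_h,\bu_h)\gtrsim\|\bu_h\|_{1,h}^2$ and $c(\theta_h,\theta_h)\gtrsim\|\theta_h\|_{1,\Omega}^2$, and the inf-sup estimate above for the third term. The one remaining piece is the cross term $\delta a_h(\bu_h,\bw)$, which I control by continuity of $a_h$ and Young's inequality:
\begin{equation*}
|\delta\, a_h(\bu_h,\bw)| \;\leq\; \epsilon\,\|\bu_h\|_{1,h}^2 \;+\; \frac{\delta^2 \|a_h\|^2}{4\epsilon}\,\|\bw\|_{1,h}^2 \;\leq\; \epsilon\,\|\bu_h\|_{1,h}^2 \;+\; C\,\frac{\delta^2}{\epsilon}\bigl(\|p_h\|_{0,\Omega}^2 + \|\lambda_h\|_{-1/2,\Gamma_{m,h}}^2\bigr).
\end{equation*}
Choosing $\epsilon$ small enough to be absorbed into the coercivity of $a_h$, and then $\delta$ small enough so that $C\delta^2/\epsilon$ is much smaller than the inf-sup constant in front of $\delta$ on the pressure-multiplier block, I obtain
\begin{equation*}
B(\bu_h,p_h,\theta_h,\lambda_h;\bv,q,\tau,\xi) \;\gtrsim\; \|\bu_h\|_{1,h}^2 + \|p_h\|_{0,\Omega}^2 + \|\lambda_h\|_{-1/2,\Gamma_{m,h}}^2 + \|\theta_h\|_{1,\Omega}^2 \;=\; |||(\bu_h,p_h,\theta_h,\lambda_h)|||^2.
\end{equation*}

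\textbf{Control of the test norm.} By the triangle inequality, $\|\bv\|_{1,h} \leq \|\bu_h\|_{1,h} + \delta\|\bw\|_{1,h} \lesssim \|\bu_h\|_{1,h} + \|p_h\|_{0,\Omega} + \|\lambda_h\|_{-1/2,\Gamma_{m,h}}$, while $\|q\|_{0,\Omega}=\|p_h\|_{0,\Omega}$, $\|\xi\|_{-1/2,\Gamma_{m,h}}=\|\lambda_h\|_{-1/2,\Gamma_{m,h}}$ and $\|\tau\|_{1,\Omega}=\|\theta_h\|_{1,\Omega}$, so $|||(\bv,q,\tau,\xi)||| \lesssim |||(\bu_h,p_h,\theta_h,\lambda_h)|||$, which completes the argument.

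\textbf{Main obstacle.} The only delicate point is the balancing of constants: the cross term $\delta a_h(\bu_h,\bw)$ has to be split via Young's inequality so that (i) one piece is absorbed by the coercivity of $a_h$ on $\bu_h$, and (ii) the other piece does not overwhelm the quadratic gain $\delta\,b_h(\bw,(p_h,\lambda_h))$ on the pressure-multiplier block. Once $\epsilon$ is fixed by the coercivity constant of $a_h$, the smallness condition on $\delta$ is forced and deterministic. There is no essentially new ingredient beyond the inf-sup in Lemma~\ref{lem:B-disinf} and the ellipticities in Lemma~\ref{lem:B-discoer}.
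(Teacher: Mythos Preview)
Your proposal is correct and is essentially the detailed version of the paper's proof, which simply states that the result follows from combining Lemma~\ref{lem:B-discoer} (coercivity of $a_h$ and $c$) with Lemma~\ref{lem:B-disinf} (the discrete inf-sup for $b_h$). Your explicit choice $\bv=\bu_h+\delta\bw$, $(q,\xi)=(-p_h,-\lambda_h)$, $\tau=\theta_h$ together with the Young-inequality balancing of $\delta$ is precisely the standard mechanism by which that combination is effected; the paper leaves these routine details implicit.
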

\begin{proof}
 Combining Lemma \ref{lem:B-discoer} with Lemma \ref{lem:B-disinf} leads to the stated result. 
\end{proof}

Now we are in position to prove that the solution to \eqref{eq:cross-flow-fv1-discrete-nonconforming} is unique. This is stated in the next result.

 \begin{theorem}
 {Let $(\bu_h,p_h,\theta_h,\lambda_h)$ be a solution of (\ref{eq:cross-flow-fv1-discrete-nonconforming}).  Then, the following estimate holds:
     \[\triplenorm{(\bu_h,p_h,\lambda_h,\theta_h)}\lesssim \Vert \bu_{in}\Vert_{1/2,\Gi}+g_2.\]
 
 Moreover, if 
      $\Vert\bu_h\Vert_{1,h}\le M$, for sufficiently small positive $M<1$, then
     $(\bu_h,p_h,\theta_h,\lambda_h)$ is the unique solution} 
     of (\ref{eq:cross-flow-fv1-discrete-nonconforming}).
    \end{theorem}
	\begin{proof}
    The first part  follows from Lemma \ref{lem:stab-div-11} with the continuity bounds of the bilinear forms and the lifting arguments.
   Let $(\bu_1,p_1,\theta_1,\lambda_1)$ and $(u_2,p_2,\theta_2,\lambda_2)$ be two discrete weak solutions of \eqref{eq:cross-flow-fv1-discrete-nonconforming}. Using Lemma \ref{lem:stab-div-11}, for each $({\bu}_1-\bu_2,{p}_1-p_2,{\theta}_1-\theta_2,{\lambda}_1-\lambda_2)\in {\bH_{h,0}}\times Q_h\times Z_h\times W_h $, we find $(\bv,q,\tau,\xi)\in {\bH_{h,0}}\times Q_h\times Z_h\times W_h$ with 
      \[\triplenorm{(\bv,q,\tau,\xi)}\lesssim\triplenorm{({\bu}_1-\bu_2,{p}_1-p_2,{\theta}_1-\theta_2,{\lambda}_1-\lambda_2)},\]
        such that
        $$
        \triplenorm{({\bu}_1-\bu_2,{p}_1-p_2,{\theta}_1-\theta_2,{\lambda}_1-\lambda_2)}^2\lesssim B({\bu}_1-\bu_2,{p}_1-p_2,{\theta}_1-\theta_2,{\lambda}_1-\lambda_2; \bv,q,\tau,\xi).
        $$
      By (\ref{eq:cross-flow-fv1-discrete-nonconforming}), we can assert the bound 
      \begin{align}\label{eq:lem-div-11}
       \triplenorm{({\bu}_1-\bu_2,{p}_1-p_2,{\theta}_1-\theta_2,{\lambda}_1-\lambda_2)}^2&\lesssim B({\bu}_1,{p}_1,{\theta}_1,{\lambda}_1; \bv,q,\tau,\xi)-B(\bu_2,p_2,\theta_2,\lambda_2; \bv,q,\tau,\xi)\nonumber\\
       &\lesssim|\widetilde{a}_h(\bu_1,\bu_1,\bv)-\widetilde{a}_h(\bu_2,\bu_2,\bv)|+|\widetilde{c}_h(\bu_1,\theta_1,\tau)-\widetilde{c}_h(\bu_2,\theta_2,\tau)|\nonumber\\
       &\quad+|\langle \xi, g(\theta_1)-g(\theta_2)\rangle_{\Gamma_m}|.
      \end{align}
      Using the continuity bounds implies that
      \begin{subequations}\label{eq:lem-div-12}
      \begin{align}
      |\widetilde{a}_h(\bu_1,\bu_1,\bv)-\widetilde{a}_h(\bu_2,\bu_2,\bv)|&\le M \Vert(\bu_1-{\bu}_2)\Vert_{1,h}\;\Vert\bv\Vert_{1,h},\\
      |\widetilde{c}(\bu,\theta,\tau)-\widetilde{c}(\bu,\theta,\tau)|&\le M \big(\|\theta_1-{\theta}_2\|_{1,\Omega}+\Vert(\bu_1-{\bu}_2)\Vert_{1,h}\big)\|\tau\|_{1,\Omega},\\
      |\langle \xi, g(\theta_1)-g(\theta_2)\rangle_{\Gamma_m}|&\le L'\|\xi\|_{-1/2,\Gamma_{m,h}}\|(\theta_1-{\theta}_2)\|_{1,\Omega},
      \end{align}          
      \end{subequations}
      where $M,L'>0$ are sufficiently small. 
       Combining (\ref{eq:lem-div-11}) and (\ref{eq:lem-div-12}) implies that
      $$
       \triplenorm{({\bu}_1-\bu_2,{p}_1-p_2,{\theta}_1-\theta_2,{\lambda}_1-\lambda_2)}^2 \lesssim 0.
      $$
      This completes the proof of the second part. 
	\end{proof}
  
	\section{Convergence of the divergence-conforming discretisation}\label{sec:cv}
	Now we turn to the derivation of a priori error bounds for the finite  element formulation proposed in  Section~\ref{sec:div-conforming}.  

    \begin{theorem}
     Let $(\bu,p,\theta,\lambda)$ and $(u_h,p_h,\theta_h, \lambda_h)$ be the continuous and discrete weak solutions of \eqref{eq:cross-flow-fv1} and \eqref{eq:cross-flow-fv1-discrete-nonconforming}, respectively. If 
     \[\Vert\bu\Vert_{1,h}\le M,\quad\mbox{and}\quad \Vert\bu_h\Vert_{1,h}\le M,\]
     for sufficiently small positive $M<1$, then
     \[\triplenorm{(\bu-\bu_h,p-p_h,\theta-\theta_h,\lambda-\lambda_h)}^2\lesssim \triplenorm{(\bu-\tilde{\bu},p-\tilde{p},\theta-\tilde{\theta},\lambda-\tilde{\lambda})}^2+\sum_{K\in\CT_h}h_K^2|\bu-\tilde{\bu}|_{2,K}^2.\]
     Moreover, if $(\bu,p,\theta,\lambda)\in \bH^{k+2}(\Omega)\cap \bH\times \H^{k+1}(\Omega)\cap Q\times \H^{k+2}(\Omega)\cap W \times \H^{k+1/2}(\Gamma_m)\cap Z$, then 
     $$
     \triplenorm{(\bu-\bu_h,p-p_h,\theta-\theta_h,\lambda-\lambda_h)}^2\lesssim h^{k+1}.    
     $$
    \end{theorem}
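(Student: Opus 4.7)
The plan is to follow a Céa-type argument adapted to this nonlinear, non-conforming saddle-point system. Given suitable approximants $\tilde{\bu}\in\bH_h$, $\tilde{p}\in Q_h$, $\tilde{\theta}\in Z_h$, $\tilde{\lambda}\in W_h$, I first decompose the error via the triangle inequality as
\[
(\bu-\bu_h,\,p-p_h,\,\theta-\theta_h,\,\lambda-\lambda_h) = (\bu-\tilde{\bu},\,p-\tilde{p},\,\theta-\tilde{\theta},\,\lambda-\tilde{\lambda}) + (\tilde{\bu}-\bu_h,\,\tilde{p}-p_h,\,\tilde{\theta}-\theta_h,\,\tilde{\lambda}-\lambda_h),
\]
so only the fully discrete component must be controlled by the best-approximation terms. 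Next, I apply the combined discrete stability from Lemma~\ref{lem:stab-div-11} to the discrete-error tuple to produce a test function $(\bv,q,\tau,\xi)\in \bH_{h,0}\times Q_h\times Z_h\times W_h$ satisfying $|||(\bv,q,\tau,\xi)|||\lesssim |||(\tilde{\bu}-\bu_h,\tilde{p}-p_h,\tilde{\theta}-\theta_h,\tilde{\lambda}-\lambda_h)|||$ and controlling the squared triple-bar norm of the discrete error by $B(\tilde{\bu}-\bu_h,\tilde{p}-p_h,\tilde{\theta}-\theta_h,\tilde{\lambda}-\lambda_h;\bv,q,\tau,\xi)$.

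I then insert $\pm\bu,\pm p,\pm\theta,\pm\lambda$ and use the discrete system \eqref{eq:cross-flow-fv1-discrete-nonconforming} together with the continuous weak form \eqref{eq:cross-flow-fv1} tested against the same $(\bv,q,\tau,\xi)$. Three residual contributions remain: (i) the nonlinear mismatches $\widetilde{a}_h(\bu;\bu,\bv)-\widetilde{a}_h(\bu_h;\bu_h,\bv)$ and the analogous one for $\widetilde{c}$; (ii) the boundary-flux mismatch $\langle\xi,g(\theta)-g(\theta_h)\rangle_{\Gm}$; and (iii) the DG consistency residual arising because $\bu\notin\bH_{h,0}$ and the interior-penalty form \eqref{eq:bilinear_form_ah} contains jumps and averages on $\mathcal{E}_I\cup\CE_D$. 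For (i)--(ii) I would use polylinear splitting, e.g. $\widetilde{a}_h(\bu;\bu,\bv)-\widetilde{a}_h(\bu_h;\bu_h,\bv)=\widetilde{a}_h(\bu-\bu_h;\bu,\bv)+\widetilde{a}_h(\bu_h;\bu-\bu_h,\bv)$, the continuity bounds \eqref{eq:discrete-widetilde_a}--\eqref{eq:discrete-widetilde_c}, the Lipschitz property of $g$, and the hypothesis $\|\bu\|_{1,h},\|\bu_h\|_{1,h}\le M<1$; splitting $\bu-\bu_h = (\bu-\tilde{\bu}) + (\tilde{\bu}-\bu_h)$ and similarly for the other unknowns, the factors involving $(\tilde{\bu}-\bu_h,\ldots)$ carry the small constant $M$ and are absorbed into the left-hand side, while the $(\bu-\tilde{\bu},\ldots)$ factors become best-approximation terms.

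For (iii), since the exact $\bu$ is smooth, $[\![\bu\otimes\bn_e]\!]=\bzero$ on $\mathcal{E}_I\cup\CE_D$ and the consistency residual collapses to terms of the form $\sum_{e\in\mathcal{E}_I\cup\CE_D}\int_e\{\nabla(\bu-\tilde{\bu})\}:[\![\bv\otimes\bn_e]\!]$. A standard trace inequality $\|\nabla(\bu-\tilde{\bu})\|_{0,e}^2\lesssim h_K^{-1}|\bu-\tilde{\bu}|_{1,K}^2 + h_K|\bu-\tilde{\bu}|_{2,K}^2$, Cauchy--Schwarz in the penalty pairing, and Young's inequality produce exactly the extra term $\sum_{K\in\CT_h}h_K^2|\bu-\tilde{\bu}|_{2,K}^2$ appearing in the statement. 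Combining all bounds, absorbing into the left using the smallness of $M$, and applying the triangle inequality once more yields the first, quasi-best estimate.

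The asymptotic rate then follows from classical interpolation estimates: the canonical BDM$_{k+1}$ interpolant for velocity (bounded in the broken norm \eqref{eq:broken-H1-norm}), the $L^2$-projection onto piecewise $\mathbb{P}_k$ for pressure, the Lagrange interpolant of degree $k+1$ for the concentration, and a piecewise-polynomial projection onto $W_h$ for the multiplier, all of which are $O(h^{k+1})$ under the stated regularity, together with $h_K|\bu-\tilde{\bu}|_{2,K}=O(h^{k+1})$. The main technical obstacle is the careful bookkeeping of the smallness constants in step (i): I must ensure that the aggregate multiplier on the $(\tilde{\bu}-\bu_h,\ldots)$ terms arising from the trilinear and boundary pieces stays strictly below the global inf-sup constant of $B$, uniformly in $h$, so the absorption step is legitimate; once this is verified, the rest is a standard assembly of DG--BDM approximation arguments.
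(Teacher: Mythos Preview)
Your proposal is correct and follows essentially the same route as the paper: triangle-inequality splitting, invocation of Lemma~\ref{lem:stab-div-11} on the fully discrete part, insertion of the exact solution to expose the nonlinear residuals $\widetilde{a}_h(\bu;\bu,\bv)-\widetilde{a}_h(\bu_h;\bu_h,\bv)$, $\widetilde{c}(\bu;\theta,\tau)-\widetilde{c}(\bu_h;\theta_h,\tau)$ and $\langle\xi,g(\theta)-g(\theta_h)\rangle_{\Gm}$, bounding these via continuity and the smallness of $M$ (and the Lipschitz constant of $g$) with a further split $\bu-\bu_h=(\bu-\tilde{\bu})+(\tilde{\bu}-\bu_h)$ so that the discrete-error factors can be absorbed, and finally standard BDM/DG/Lagrange interpolation for the rate. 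Your step (iii) is in fact more explicit than the paper's own proof, which simply asserts the extra term $\sum_{K}h_K^2|\bu-\tilde{\bu}|_{2,K}^2$ after ``combining'' the bounds; your trace-inequality argument for the face integrals $\int_e\{\nabla(\bu-\tilde{\bu})\}:[\![\bv\otimes\bn_e]\!]$ inside $a_h(\tilde{\bu}-\bu,\bv)$ is exactly the missing detail that produces it.
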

	\begin{proof}
	    To prove the above stated result, we first split the 
        the error into two parts as
        \begin{align}\label{eq:error-div-13}
         \triplenorm{(\bu-\bu_h,p-p_h,\theta-\theta_h,\lambda-\lambda_h)}
         &\le \triplenorm{(\bu-\tilde{\bu},p-\tilde{p},\theta-\tilde{\theta},\lambda-\tilde{\lambda})}
         \nonumber\\
&\qquad          +\triplenorm{(\tilde{\bu}-\bu_h,\tilde{p}-p_h,\tilde{\theta}-\theta_h,\tilde{\lambda}-\lambda_h)}.   
        \end{align}
        Next we derive the bound of 
      $  \triplenorm{(\tilde{\bu}-\bu_h,\tilde{p}-p_h,\tilde{\theta}-\theta_h,\tilde{\lambda}-\lambda_h)}$. Using Lemma \ref{lem:stab-div-11}, for each $(\tilde{\bu}-\bu_h,\tilde{p}-p_h,\tilde{\theta}-\theta_h,\tilde{\lambda}-\lambda_h)\in {\bH_{h,0}}\times Q_h\times Z_h\times W_h$, we find $(\bv,q,\tau,\xi)\in {\bH_{h,0}}\times Q_h\times Z_h\times W_h$ with  
      \[\triplenorm{(\bv,q,\tau,\xi)}\lesssim\triplenorm{(\tilde{\bu}-\bu_h,\tilde{p}-p_h,\tilde{\theta}-\theta_h,\tilde{\lambda}-\lambda_h)},\]
        such that
        $$
        \triplenorm{(\tilde{\bu}-\bu_h,\tilde{p}-p_h,\tilde{\theta}-\theta_h,\tilde{\lambda}-\lambda_h)}^2\lesssim B(\tilde{\bu}-\bu_h,\tilde{p}-p_h,\tilde{\theta}-\theta_h,\tilde{\lambda}-\lambda_h; \bv,q,\tau,\xi).
        $$
      By (\ref{eq:cross-flow-fv1-discrete-nonconforming}), it follows that 
      \begin{align}\label{eq:error-div-11}
       \triplenorm{(\tilde{\bu}-\bu_h,\tilde{p}-p_h,\tilde{\theta}-\theta_h,\tilde{\lambda}-\lambda_h)}^2&\lesssim B(\tilde{\bu},\tilde{p},\tilde{\theta},\tilde{\lambda}; \bv,q,\tau,\xi)-B(\bu_h,p_h,\theta_h,\lambda_h; \bv,q,\tau,\xi)\nonumber\\
       &\lesssim B(\tilde{\bu},\tilde{p},\tilde{\theta},\tilde{\lambda}; \bv,q,\tau,\xi)-B(\bu,p,\theta,\lambda; \bv,q,\tau,\xi)
        +\mathcal{R}_{em}\nonumber\\
       &\lesssim B(\tilde{\bu}-\bu,\tilde{p}-p,\tilde{\theta}-\theta,\tilde{\lambda}-\lambda; \bv,q,\tau,\xi)
        +\mathcal{R}_{em}, 
      \end{align}
      where
      $\mathcal{R}_{em}:=\widetilde{a}_h(\bu,\bu,\bv)-\widetilde{a}_h(\bu_h,\bu_h,\bv)+\widetilde{c}_h(\bu,\theta,\tau)-\widetilde{c}_h(\bu_h,\theta_h,\tau)+\langle \xi, g(\theta)-g(\theta_h)\rangle_{\Gamma_m}$.
      Using the continuity bounds implies that
      \begin{subequations}\label{eq:error-div-12}
      \begin{align}
&      |\widetilde{a}_h(\bu,\bu,\bv)-\widetilde{a}_h(\bu_h,\bu_h,\bv)|\le M \big(\Vert(\bu-\tilde{\bu})\Vert_{1,h}+\Vert(\bu_h-\tilde{\bu})\Vert_{1,h}\big)\Vert\bv\Vert_{1,h},\\
&      |\widetilde{c}(\bu,\theta,\tau)-\widetilde{c}(\bu,\theta,\tau)|\le M \big(\|\theta-\tilde{\theta}\|_{1,\Omega}+\|\theta_h-\tilde{\theta}\|_{1,\Omega}+\Vert(\bu-\tilde{\bu})\Vert_{1,h}
      +\Vert(\bu_h-\tilde{\bu})\Vert_{1,h}\big)\|\tau\|_{1,\Omega},\\
   &   |\langle \xi, g(\theta)-g(\theta_h)\rangle_{\Gamma_m}|\le L'\|\xi\|_{-1/2,\Gamma_{m,h}}(\|\theta-\tilde{\theta}\|_{1,\Omega}+\|\theta_h-\tilde{\theta}\|_{1,\Omega}),
      \end{align}          
      \end{subequations}
      where $M,L'>0$ are sufficiently small. 
       Combining (\ref{eq:error-div-11}) and (\ref{eq:error-div-12}) yields that
      $$
       \triplenorm{(\tilde{\bu}-\bu_h,\tilde{p}-p_h,\tilde{\theta}-\theta_h,\tilde{\lambda}-\lambda_h)}^2\lesssim
       \triplenorm{(\bu-\tilde{\bu},p-\tilde{p},\theta-\tilde{\theta},\lambda-\tilde{\lambda})}^2+\sum_{K\in\CT_h}h_K^2|\bu-\tilde{\bu}|_{2,K}^2.
      $$
      Substituting the above bound in (\ref{eq:error-div-13}) leads to the stated estimate. Using the approximation results given in \cite{john2017divergence, kashiwabara2019penalty} leads to the second stated result. 
	\end{proof}

	\section{Lagrange multiplier stabilisation}
	\label{sec:lagrange-multiplier}
	We now briefly present a least-squares stabilised scheme 
	in the case of boundary conditions associated with permeability or slip-type.
	In the conforming case, the discrete inf-sup condition to be satisfied is given by
	\[
	\sup_{\bv_h\in \bH_h,\bv_h\neq\boldsymbol{0}}\frac{
		b_h(\bv_h,(q_h,\xi_h)) 
	}{\Vert\bv_h\Vert_{1,\Omega}}\gtrsim \Vert q_h\Vert_{0,\Omega}+\Vert \xi_h\Vert_{-1/2,\Gm} \quad \forall (q_h,\xi_h)\in Q_h\times W_h. 
	\]
	However, in \cite{verfurth86} it is shown that despite choosing stable inf-sup elements together with a typical choice for the Lagrange multiplier space as above, this condition may not be satisfied.  To circumvent this difficulty, one can either enrich the velocity space with bubbles having compact support along $\Gm$ (see \cite{verfurth86} for details), or add suitable residual stabilisation in the discrete problem (see, for example \cite{verfurth91,urquiza2014weak}). We adopt the latter option. 
	%
	We define generic FE spaces $\bH_h\subset\bH$, $Q_h\subset Q$, $W_h\subset W$ and $Z_h\subset Z$ for the velocity, pressure, Lagrange multiplier, and concentration, respectively.
	Following  \cite{urquiza2014weak},  we first define the following mesh-dependent bilinear form $d_h:W_h\times W_h\to \mathbb{R}$: 
	\begin{equation}
		\label{eq:normH-1/2}
		d_h(\lambda_h,\xi_h)
		:=\sum_{e\in\mathcal{E}_{\mathrm{m}}}h_e\int_{e}\lambda_h\,\xi_h\,\mathrm{d}s, \qquad \forall \lambda_h,\xi_h\in W_h.
	\end{equation}
	The resulting stabilised formulation consists in finding $(\bu_h,p_h,\lambda_h,\theta_h)\in \bH_h\times Q_h\times W_h\times Z_h$ such that
	\begin{equation}
		\label{eq:cross-flow-fv1-discrete-conforming-stabilised}
		\begin{aligned}
			a(\bu_h,\bv_h)+ \widetilde{a}(\bu_h;\bu_h,\bv_h) + b(\bv_h,(p_h,\lambda_h)) +s_1((\bu_h,p_h,\lambda_h),\bv_h) &=0 ,\\
			b(\bu_h,(q_h,\xi_h))+s_2((\bu_h,p_h,\lambda_h),(q_h,\xi_h)) &= \langle\xi_h, g(\theta_h)\rangle_{\Gm} ,\\
			c(\theta_h,\tau_h) + \widetilde{c}(\bu_h;\theta_h,\tau_h)&=0,
		\end{aligned}
	\end{equation}
	for all $(\bv_h,q_h,\xi_h,\tau_h)\in \bH_h \times Q_h\times W_h\times Z_h$, where the stabilising  bilinear forms are 
	$$
	\begin{aligned}
		s_1((\bu_h,p_h,\xi_h),\bv_h)&=-\alpha_0d_h\bigl(\xi_h+ (\bsig_h\bn)\cdot\bn, \delta \mu_0(\nabla\bv_h\bn)\cdot\bn\bigr), \\
		s_2((\bu_h,p_h,\xi_h),(q_h,\chi_h))&=-\alpha_0d_h\bigl(\xi_h+(\bsig_h\bn)\cdot\bn,\chi_h-\delta(q_h\mathbb{I}\bn)\cdot\bn\bigr).
	\end{aligned}
	$$
	Note that for the conforming method, 
	the discrete quantities $\xi_h,\,\bsig_h\bn\cdot \bn,\, \nabla\bv_h\bn\cdot \bn,\, q_h\mathbb{I}\bn\cdot\bn$ all belong to $W_h$. 
	%
	Note also that, for a Navier--Stokes model with slip boundary condition, \cite{verfurth91} proved that choosing $\delta=0$ and $\alpha_0$ lower than a threshold yields a stable method. As in  \cite{hughes1989new}, for $\delta=1$  we have symmetry, however a smallness condition on $\alpha_0$ is needed for the sake of  stability. For $\delta=-1$ we have the anti-symmetric variation of the method \cite{franca1991error,barbosa1992boundary}, whose main advantage is the unconditional stability with respect to $\alpha_0$. 
	
	Following \cite{urquiza2014weak} we have that possible FE families which satisfy the inf-sup condition include Taylor--Hood ($\mathbb{P}_2-\mathbb{P}_1$), the MINI element ($\mathbb{P}_{1,b}-\mathbb{P}_1$), or Crouzeix--Raviart ($\mathbb{P}_{2,b}-\mathbb{P}_{1,\text{disc}}$) (see\cite{crouzeix1973conforming}).  We remark that the $\bH^1(\Omega)$-nonconforming CR family can be used in this scheme without  stabilisation. Indeed, it is enough to consider $\bH_h$ as in Section \ref{sec:div-conforming} together with $\alpha_0=0$. This choice  reduces \eqref{eq:cross-flow-fv1-discrete-conforming-stabilised} to the discrete counterpart of \eqref{eq:cross-flow-fv1}, i.e., find $(\bu_h,p_h,\lambda_h,\theta_h)\in \bH_h\times Q_h\times W_h\times Z_h$ such that
	\begin{equation}
		\label{eq:CR-cross-flow-fv1-discrete-conforming}
		\begin{aligned}
			a_h(\bu_h,\bv_h)+ \widetilde{a}_h(\bu_h;\bu_h,\bv_h) + b(\bv_h,(p_h,\lambda_h))  &=0 ,\\
			b(\bu_h,(q_h,\xi_h))&= \langle\xi_h, g(\theta_h)\rangle_{\Gm} ,\\
			c(\theta_h,\tau_h) + \widetilde{c}(\bu_h;\theta_h,\tau_h)&=0,
		\end{aligned}
	\end{equation}
	for all $(\bv_h,q_h,\xi_h,\tau_h)\in \bH_h \times Q_h\times W_h\times Z_h$
	where $a_h(\cdot,\cdot)$ is the same as in Section \ref{subsec:BDM-RT-model-and-properties}, and
	$$
	\widetilde{a}_h(\bu_h;\bu_h,\bv_h):= \rho_0\int_\Omega (\bw\cdot\nabla_h\bu)\cdot\bv.
	$$
	Consequently, one can follow the analysis in \cite{kashiwabara2019penalty} to prove the well-posedness and convergence of this formulation.
	\section{Numerical experiments}\label{sec:experiments}
	We perform a series of computational tests using the finite element library FEniCS \cite{AlnaesEtal2015} together with the special module FeniCS$_{ii}$ \cite{kuchta2020assembly} for the treatment of bulk-surface coupling mechanisms. 
 We perform an experimental error analysis through manufactured solutions. We monitor the errors of each individual unknown, the local convergence rate, and the number of necessary Newton--Raphson iterations to achieve convergence up to a prescribed tolerance of $10^{-7}$ on the residuals. 
	By $e(\cdot)$ we  denote the error associated with the quantity $\cdot$ in its natural norm, and denote by $h_i$ the mesh~size corresponding to a refinement level $i$. The experimental convergence order is computed as
	\[r(\cdot) = \frac{\log(e_i(\cdot))- \log(e_{i+1}(\cdot))}{\log(h_i)- \log(h_{i+1})}.\]
	To compute $\|\lambda-\lambda_h\|_{s,\Gm}$ (with $s \in \{-\frac{1}{2}, \frac{1}{2}\}$ because we  use Lagrange multipliers in these two spaces) we use the characterisation of $\H^{s}(\Gm)$
		in terms of the spectral decomposition of the Laplacian operator (see, e.g., \cite{kuchta2020assembly}). 
		For this, let $R : \H^1(\Gm) \rightarrow \H^1(\Gm)$ be the bounded linear operator defined by  
		\[ 
		(Ru,v)_{1,\Gm} = (u,v)_{0,\Gm} \quad \forall u,  v\in \H^1(\Gm).
		\]
		This operator has eigenfunctions $\{r_i\}_{i=1}^\infty$ forming a basis, associated with a non-increasing sequence of positive eigenvalues $\eta_i$. Then for any 
		$u = \sum_{i=1}^\infty c_i r_i$ there holds
		\[ 
		\|u\|_{s,\Gm}^2 = \sum_{i=1}^\infty c_i^2 \eta_i^{s}\,,
		\] 
		and so $\H^{s}(\Gm)$ is the closure of the span of   $\{r_i\}_{i=1}^\infty$ 
		in this norm. 
	During the experiments, different values for the stabilisation parameters are considered in order to capture the convergence of the method. 	

   We also examine the behaviour of the schemes presented in Sections \ref{sec:div-conforming} and \ref{sec:lagrange-multiplier}, focusing slightly more on the conservative scheme from  Section \ref{sec:div-conforming}. Furthermore, as we note in the experiments below,  CR elements are more versatile as they can be used in both stabilised and non-stabilised schemes. 

	\subsection{Divergence-conforming test}
	\label{test-nonconforming}
	First we study the experimental convergence with respect to smooth solutions in two an three dimensions.   We consider first $\O:=(0,1)^2$ with given data. Let us consider right-hand sides and appropriate boundary conditions such that the exact solution is given by
	$$
	\bu(x,y)=\left(
	\begin{aligned}
		&\cos(\pi x)\,\sin(\pi y)\\
		&-\cos(\pi y)\,\sin(\pi x)
	\end{aligned}
	\right), \qquad p(x,y)=\sin(x^2+y^2), \qquad \theta(x,y)=e^{-xy}.
	$$
	This solution satisfies $\nabla\cdot\bu=0$ in $\Omega$, and  the physical parameters $\mu_0,\rho_0$ and $D_0$ are set to one.		Table \ref{tables-1to4} presents the error history (errors with respect to mesh refinement and Newton iteration count) for different values of $k$ and a stabilisation parameter $\alpha_0=10$. It is noted that the optimal order of convergence $O(h^{k+1})$ is attained for velocity, pressure and concentration in  their respective  norms, and for the Lagrange multiplier  in the $\H^{-1/2}(\Gm)-$norm. This confirms the analysis in Section~\ref{sec:cv}. The error for the velocity was computed using \eqref{eq:broken-H1-norm}. 

 \begin{table}[t!]
		\setlength{\tabcolsep}{3.5pt}
		\centering 
		{\small\begin{tabular}{|rcccccccccc|}
				\hline\hline
				DoF   &   $h$  & $\mathrm{e}(\bu)$  &   $r(\bu)$   &   $\mathrm{e}(p)$  &   $r(p)$  &  $\mathrm{e}(\lambda)$  &   $r(\lambda) $ &  $\mathrm{e}(\theta)$  &  $r(\theta)$  &   it \\
				\hline 
				\multicolumn{11}{|c|}{$k=0$} \\
				\hline
				971 & 0.141 & 4.69e-01 & $\star$ & 8.97e-01 & $\star$ & 2.23e-01 & $\star$ & 3.60e-02 & $\star$ &  7 \\
  3741 & 0.071 & 2.34e-01 & 1.01 & 4.58e-01 & 0.97 & 7.08e-02 & 1.66 & 1.81e-02 & 1.00 &  7 \\
  8311 & 0.047 & 1.56e-01 & 1.00 & 3.08e-01 & 0.98 & 3.84e-02 & 1.51 & 1.21e-02 & 1.00 &  7 \\
 14681 & 0.035 & 1.17e-01 & 1.00 & 2.32e-01 & 0.99 & 2.57e-02 & 1.39 & 9.04e-03 & 1.00 &  7 \\
 22851 & 0.028 & 9.33e-02 & 1.00 & 1.86e-01 & 0.99 & 1.92e-02 & 1.31 & 7.23e-03 & 1.00 &  7 \\
 32821 & 0.024 & 7.77e-02 & 1.00 & 1.55e-01 & 0.99 & 1.53e-02 & 1.25 & 6.03e-03 & 1.00 &  7 \\
				\hline
				\multicolumn{11}{|c|}{$k=1$}\\
				\hline
				 2621 & 0.141 & 2.87e-02 & $\star$ & 9.43e-02 & $\star$ & 1.29e-02 & $\star$ & 9.17e-04 & $\star$ &  7 \\
 10241 & 0.071 & 7.04e-03 & 2.03 & 2.46e-02 & 1.94 & 1.97e-03 & 2.71 & 2.30e-04 & 1.99 &  7 \\
 22861 & 0.047 & 3.11e-03 & 2.01 & 1.11e-02 & 1.97 & 6.74e-04 & 2.65 & 1.03e-04 & 1.99 &  7 \\
 40481 & 0.035 & 1.75e-03 & 2.01 & 6.25e-03 & 1.98 & 3.22e-04 & 2.57 & 5.78e-05 & 2.00 &  7 \\
 63101 & 0.028 & 1.12e-03 & 2.01 & 4.01e-03 & 1.99 & 1.85e-04 & 2.49 & 3.70e-05 & 2.00 &  7 \\
 90721 & 0.024 & 7.75e-04 & 2.00 & 2.79e-03 & 1.99 & 1.19e-04 & 2.41 & 2.57e-05 & 2.00 &  7 \\
 \hline
				\multicolumn{11}{|c|}{$k=2$}\\
				\hline
    5071 & 0.141 & 1.24e-03 & $\star$ & 4.45e-03 & $\star$ & 6.50e-04 & $\star$ & 1.03e-05 & $\star$ &  7 \\
 19941 & 0.071 & 1.51e-04 & 3.04 & 5.75e-04 & 2.95 & 6.83e-05 & 3.25 & 1.28e-06 & 3.01 &  7 \\
 41703 & 0.049 & 4.91e-05 & 3.02 & 1.90e-04 & 2.98 & 2.11e-05 & 3.16 & 4.20e-07 & 3.00 &  7 \\
 75193 & 0.036 & 2.01e-05 & 3.01 & 7.84e-05 & 2.99 & 8.35e-06 & 3.12 & 1.73e-07 & 3.00 &  7 \\
118483 & 0.029 & 1.01e-05 & 3.01 & 3.96e-05 & 2.99 & 4.12e-06 & 3.10 & 8.72e-08 & 3.00 &  7 \\
177421 & 0.024 & 5.50e-06 & 3.01 & 2.16e-05 & 2.99 & 2.21e-06 & 3.08 & 4.75e-08 & 3.00 &  7 \\
				\hline
				\hline
		\end{tabular}}
		\smallskip
		\caption{Example \ref{test-nonconforming}. 
  Error history and Newton iteration count for a FE family of $\mathbb{BDM}_{k+1}-\mathbb{P}_{k}- \mathbb{P}_{k} - \mathbb{P}_{k+1}$, with $k=0,1,2,$ for $\bu_h, p_h,\lambda_h$ and $\theta_h$, respectively, on the unit square domain $\O=(0,1)^2$. For this case the Lagrange multiplier errors are measured in the $\H^{-1/2}(\Gm)-$norm, and $\alpha_0=10(k+2)$. }
		\label{tables-1to4}
	\end{table}	
	
    Next we consider the unit cube $\O:=(0,1)^3$. Although the analysis has been performed for two dimensions, we study the performance of the method in three dimensions, where the respective tangential components are now considered in the decomposition of the stress tensor. The right-hand side and boundary conditions are chosen such that the exact solution is given by
    $$
	\bu(x,y,z)=\left(
	\begin{aligned}
		&\sin(\pi z)\,\cos(\pi y)\\
		&-\cos(\pi x)\,\sin(\pi z)\\
        &\sin(\pi x)\,\cos(\pi y)
	\end{aligned}
	\right), \quad p(x,y,z)=\sin(x^2+y^2+z^2), \quad \hspace{-0.1cm} \theta(x,y)=e^{-xyz}.
	$$
    Here we observe that $\bu$ is solenoidal, and again we consider $D_0=\rho_0=\mu_0=1$. We choose $k\in\{0,1\}$ in order to study the convergence rates on different polynomial orders.
    Table \ref{tables-3D} present the error history, mesh sizes and number of iterations for the stabilisation parameter $\alpha=10(k+2)$. Here, an optimal  $\mathcal{O}(h^{k+1})$ convergence order is observed for $k=0, \,1$. 
    
\begin{table}[t!]
		\setlength{\tabcolsep}{3.5pt}
		\centering 
		{\small\begin{tabular}{|rcccccccccc|}
				\hline\hline
				DoF   &   $h$  & $\mathrm{e}(\bu)$  &   $r(\bu)$   &   $\mathrm{e}(p)$  &   $r(p)$  &  $\mathrm{e}(\lambda)$  &   $r(\lambda) $ &  $\mathrm{e}(\theta)$  &  $r(\theta)$  &   it \\
				\hline 
				\multicolumn{11}{|c|}{$k=0$} \\
				\hline
   222 & 1.000 & 2.32e+00 & $\star$ & 1.73e+00 & $\star$ & 2.81e+00 & $\star$ & 1.80e-01 & $\star$ &  4 \\
  4478 & 0.554 & 1.11e+00 & 1.26 & 1.04e+00 & 0.87 & 9.22e-01 & 1.89 & 8.82e-02 & 1.21 &  4 \\
 22466 & 0.273 & 5.96e-01 & 0.87 & 5.62e-01 & 0.87 & 4.83e-01 & 0.91 & 4.70e-02 & 0.89 &  4 \\
139391 & 0.137 & 3.04e-01 & 0.97 & 2.78e-01 & 1.01 & 1.88e-01 & 1.36 & 2.44e-02 & 0.95 &  4 \\
256159 & 0.112 & 2.44e-01 & 1.11 & 2.19e-01 & 1.20 & 1.47e-01 & 1.25 & 1.96e-02 & 1.10 &  4 \\
\hline
				\multicolumn{11}{|c|}{$k=1$}\\
				\hline

                      675 & 1.000 & 6.87e-01 & $\star$ & 5.61e-01 & $\star$ & 3.84e-01 & $\star$ & 2.52e-02 & $\star$ &  4 \\
 14106 & 0.554 & 1.72e-01 & 2.35 & 1.31e-01 & 2.46 & 9.73e-02 & 2.33 & 5.20e-03 & 2.67 &  4 \\
 71416 & 0.273 & 4.54e-02 & 1.88 & 3.37e-02 & 1.92 & 3.14e-02 & 1.60 & 1.51e-03 & 1.75 &  4 \\
447347 & 0.137 & 1.16e-02 & 1.97 & 8.78e-03 & 1.94 & 7.77e-03 & 2.02 & 3.98e-04 & 1.92 &  4 \\
824005 & 0.112 & 7.53e-03 & 2.17 & 5.72e-03 & 2.17 & 4.90e-03 & 2.34 & 2.58e-04 & 2.19 &  4 \\
				\hline
				\hline
		\end{tabular}}
		\smallskip
		\caption{Example \ref{test-nonconforming}. Error history and Newton iteration count for a FE family of $\mathbb{BDM}_{k+1}-\mathbb{P}_{k}- \mathbb{P}_{k} - \mathbb{P}_{k+1}$, with $k=0,1,$ for $\bu_h, p_h,\lambda_h$ and $\theta_h$, respectively, on the unit cube domain $\O=(0,1)^3$. For this case the Lagrange multiplier errors are measured in the $\H^{-1/2}(\Gm)-$norm, and $\alpha_0=10(k+2)$. }
		\label{tables-3D}
	\end{table}

Finally, we show the results obtained with the $\mathbb{CR}_1-\mathbb{P}_0$ pair for velocity-pressure instead of the $\mathbb{BDM}_1-\mathbb{P}_0$ pair. We consider the same manufactured solution as before and we test the scheme in two and three dimensions. We recall that piecewise constants are used to approximate the Lagrange multiplier. Table \ref{tables-13to16} describes the behaviour of the scheme with a stabilisation parameter $\alpha_0=20$, indicating a similar  accuracy as in the $\bH^1(\Omega)$-conforming scheme presented in Table \ref{tables-1to4}.

\begin{table}[t!]
    \setlength{\tabcolsep}{3.5pt}
    \centering 
    {\small\begin{tabular}{|rcccccccccc|}
            \hline\hline
            DoF   &   $h$  & $\mathrm{e}(\bu)$  &   $r(\bu)$   &   $\mathrm{e}(p)$  &   $r(p)$  &  $\mathrm{e}(\lambda)$  &   $r(\lambda) $ &  $\mathrm{e}(\theta)$  &  $r(\theta)$  &   it \\
            \hline 
            \multicolumn{11}{|c|}{Stabilisation using $\mathbb{CR}_1-\mathbb{P}_0,\,\alpha_0=20$, $\Omega=(0,1)^2$} \\
            \hline
 51 & 0.707 & 1.63e+00 & $\star$ & 1.92e+00 & $\star$ & 1.97e+00 & $\star$ & 1.87e-01 & $\star$ &  6 \\
   103 & 0.471 & 1.15e+00 & 0.87 & 1.89e+00 & 0.05 & 1.28e+00 & 1.08 & 1.19e-01 & 1.11 &  6 \\
   261 & 0.283 & 7.08e-01 & 0.95 & 1.38e+00 & 0.62 & 5.60e-01 & 1.61 & 7.15e-02 & 1.00 &  7 \\
   793 & 0.157 & 3.91e-01 & 1.01 & 8.52e-01 & 0.82 & 1.90e-01 & 1.84 & 4.00e-02 & 0.99 &  7 \\
  2721 & 0.083 & 2.05e-01 & 1.02 & 4.77e-01 & 0.91 & 5.56e-02 & 1.93 & 2.12e-02 & 0.99 &  7 \\
 10033 & 0.043 & 1.05e-01 & 1.01 & 2.52e-01 & 0.96 & 1.50e-02 & 1.97 & 1.10e-02 & 1.00 &  7 \\
    \hline 
            \multicolumn{11}{|c|}{Stabilisation using $\mathbb{CR}_1-\mathbb{P}_0,\,\alpha_0=20$, $\Omega=(0,1)^3$} \\
            \hline
222 & 1.000 & 2.29e+00 & $\star$ & 2.37e+00 & $\star$ & 2.65e+00 & $\star$ & 1.85e-01 & $\star$ &  4 \\
  4478 & 0.554 & 1.02e+00 & 1.37 & 8.10e-01 & 1.82 & 5.10e-01 & 2.79 & 8.84e-02 & 1.26 &  4 \\
 22466 & 0.273 & 5.48e-01 & 0.88 & 4.45e-01 & 0.85 & 2.14e-01 & 1.23 & 4.70e-02 & 0.89 &  4 \\
139391 & 0.137 & 2.80e-01 & 0.97 & 2.26e-01 & 0.98 & 6.90e-02 & 1.63 & 2.44e-02 & 0.95 &  4 \\
256159 & 0.112 & 2.25e-01 & 1.11 & 1.80e-01 & 1.15 & 4.86e-02 & 1.78 & 1.96e-02 & 1.10 &  4 \\

            \hline 
            \hline
    \end{tabular}}
    \smallskip
    \caption{Example \ref{test-nonconforming}. 
    Error history for  $\mathbb{CR}_1- \mathbb{P}_0- \mathbb{P}_0- \mathbb{P}_1$  approximations  for $\bu_h, p_h,\lambda_h$ and $\theta_h$, respectively. The Lagrange multiplier errors are measured in the $\H^{-1/2}(\Gm)-$norm and we considered two and three dimensional domains.}
    \label{tables-13to16}
\end{table}

\subsection{Lagrange stabilisation test}\label{subsec:lagrange-stabilisation-test}
{We report on experiments performed using the $\bH$-conforming stabilised scheme with Lagrange multipliers proposed in \cite{verfurth86} and presented in Section \ref{subsec:lagrange-stabilisation-test}.

First let us consider the same 2D domain and exact solutions as in Test \ref{test-nonconforming} and study the convergence of the conforming scheme using Taylor--Hood elements together with piecewise linear or constant discontinuous elements for the Lagrange multiplier. We also consider  stabilised and non-stabilised formulations in order to test the robustness of the scheme. The numerical results portrayed in Tables~\ref{tables-5to8}--\ref{tables-9to12} clearly confirm the theoretical $O(h^{k+1})$-con\-ver\-gence in the energy norm similarly to the observed/predicted in  \cite{verfurth91,urquiza2014weak}. 
The blocks in Table \ref{tables-5to8} show the error history displaying number of degrees of freedom, individual absolute errors, rates of convergence, and Newton iteration counts for the conforming discretisation using Taylor--Hood approximation of velocity-pressure, together with piecewise discontinuous elements for the Lagrange multiplier (on a submesh conforming with the bulk mesh), and piecewise quadratic and continuous functions for the concentration. The choice of $\mathbb{P}_0$ for the Lagrange multiplier shows an experimental rate of $\mathcal{O}(h^{1.5})$ for all cases.

On the other hand, the results of using linear discontinuous elements, presented in Table \ref{tables-9to12} show a noticeable deterioration of the convergence  for the Lagrange multiplier  when the stabilisation is removed.  In turn, an optimal rate of convergence $O(h^{k+1})$ is achieved with stabilisation.

We conclude this experiment by showing the result of using the pair $\mathbb{CR}_1-\mathbb{P}_0$ and $\alpha=0$. Since $\mathbb{CR}_1$ is divergence-free, we can use it to approximate \eqref{eq:cross-flow-fv1-discrete-conforming-stabilised} without stabilisation. The results obtained for this case are shown in Table \ref{table-CRstabilised}. Here we observe that the convergence for velocity, pressure and concentration is $\mathcal{O}(h)$, while for the Lagrange multiplier we obtain $\mathcal{O}(h^2)$, similar to that of Tables \ref{tables-5to8}--\ref{tables-9to12}. It is noteworthy to observe that this scheme is more computationally efficient than  Taylor--Hood but not than, e.g., the MINI-element (see Section \ref{subsec:benchmark} below).

\begin{table}[t!]
    \setlength{\tabcolsep}{3.5pt}
    \centering 
    {\small\begin{tabular}{|rcccccccccc|}
            \hline\hline
            DoF   &   $h$  & $\mathrm{e}(\bu)$  &   $r(\bu)$   &   $\mathrm{e}(p)$  &   $r(p)$  &  $\mathrm{e}(\lambda)$  &   $r(\lambda) $ &  $\mathrm{e}(\theta)$  &  $r(\theta)$  &   it \\
            \hline 
            \multicolumn{11}{|c|}{No stabilisation} \\
            \hline
            86 & 0.707 & 6.67e-01 & $\star$ & 1.86e-01 & $\star$ & 9.15e-02 & $\star$ & 2.78e-02 & $\star$ &  7 \\
            166 & 0.471 & 3.14e-01 & 1.86 & 5.24e-02 & 3.13 & 2.73e-02 & 2.99 & 1.12e-02 & 2.25 &  7 \\
            404 & 0.283 & 1.18e-01 & 1.92 & 1.07e-02 & 3.11 & 8.38e-03 & 2.31 & 3.81e-03 & 2.11 &  7 \\
            1192 & 0.157 & 3.71e-02 & 1.96 & 2.15e-03 & 2.74 & 2.93e-03 & 1.79 & 1.15e-03 & 2.04 &  7 \\
            4016 & 0.083 & 1.05e-02 & 1.99 & 5.09e-04 & 2.26 & 1.07e-03 & 1.59 & 3.20e-04 & 2.01 &  7 \\
            14656 & 0.043 & 2.80e-03 & 1.99 & 1.32e-04 & 2.04 & 3.82e-04 & 1.55 & 8.50e-05 & 2.00 &  7 \\
            \hline 
            \multicolumn{11}{|c|}{Stabilisation with $\alpha_0=0.1,\,\delta=-1$} \\
            \hline
            86 & 0.707 & 6.91e-01 & $\star$ & 3.66e-01 & $\star$ & 3.26e-01 & $\star$ & 3.18e-02 & $\star$ &  6\\
   166 & 0.471 & 3.18e-01 & 1.91 & 1.06e-01 & 3.06 & 9.40e-02 & 3.07 & 1.19e-02 & 2.43 &  7\\
   404 & 0.283 & 1.18e-01 & 1.94 & 2.13e-02 & 3.14 & 2.03e-02 & 3.00 & 3.85e-03 & 2.20 &  7\\
  1192 & 0.157 & 3.72e-02 & 1.97 & 3.59e-03 & 3.04 & 4.26e-03 & 2.65 & 1.15e-03 & 2.05 &  7\\
  4016 & 0.083 & 1.05e-02 & 1.99 & 6.36e-04 & 2.72 & 1.07e-03 & 2.17 & 3.20e-04 & 2.01 &  7\\
 14656 & 0.043 & 2.80e-03 & 1.99 & 1.39e-04 & 2.30 & 3.29e-04 & 1.79 & 8.51e-05 & 2.00 &  7\\
            \hline 
            \multicolumn{11}{|c|}{Stabilisation with $\alpha_0=0.1,\,\delta=0$} \\
            \hline
            86 & 0.707 & 6.68e-01 & $\star$ & 2.06e-01 & $\star$ & 1.45e-01 & $\star$ & 2.76e-02 & $\star$ &  7 \\
   166 & 0.471 & 3.14e-01 & 1.86 & 5.65e-02 & 3.20 & 3.56e-02 & 3.47 & 1.12e-02 & 2.23 &  7 \\
   404 & 0.283 & 1.18e-01 & 1.92 & 1.14e-02 & 3.13 & 8.86e-03 & 2.72 & 3.81e-03 & 2.10 &  7 \\
  1192 & 0.157 & 3.71e-02 & 1.96 & 2.23e-03 & 2.78 & 2.67e-03 & 2.04 & 1.15e-03 & 2.04 &  7 \\
  4016 & 0.083 & 1.05e-02 & 1.99 & 5.21e-04 & 2.29 & 9.06e-04 & 1.70 & 3.20e-04 & 2.01 &  7 \\
 14656 & 0.043 & 2.80e-03 & 1.99 & 1.34e-04 & 2.04 & 3.13e-04 & 1.60 & 8.50e-05 & 2.00 &  7 \\
            \hline 
            \multicolumn{11}{|c|}{Stabilisation with $\alpha_0=0.1,\,\delta=1$ } \\
            \hline
            86 & 0.707 & 6.63e-01 & $\star$ & 1.66e-01 & $\star$ & 8.23e-02 & $\star$ & 2.72e-02 & $\star$ &  7 \\
   166 & 0.471 & 3.13e-01 & 1.85 & 4.55e-02 & 3.19 & 2.23e-02 & 3.22 & 1.11e-02 & 2.21 &  7 \\
   404 & 0.283 & 1.18e-01 & 1.92 & 9.64e-03 & 3.04 & 7.08e-03 & 2.25 & 3.81e-03 & 2.10 &  7 \\
  1192 & 0.157 & 3.71e-02 & 1.96 & 2.07e-03 & 2.61 & 2.44e-03 & 1.81 & 1.15e-03 & 2.04 &  7 \\
  4016 & 0.083 & 1.05e-02 & 1.98 & 5.20e-04 & 2.17 & 8.75e-04 & 1.61 & 3.20e-04 & 2.01 &  7 \\
 14656 & 0.043 & 2.80e-03 & 1.99 & 1.36e-04 & 2.02 & 3.09e-04 & 1.57 & 8.50e-05 & 2.00 &  7 \\
            \hline
            \hline
    \end{tabular}}
    \smallskip
    \caption{Example 
    \ref{subsec:lagrange-stabilisation-test}.
    Error history for a FE family with $\mathbb{P}^2_2-\mathbb{P}_1 - \mathbb{P}_0 - \mathbb{P}_2$ 
        for $\bu_h, p_h,\lambda_h$ and $\theta_h$, respectively. For this case the Lagrange multiplier errors are measured in the $\H^{-1/2}(\Gm)-$norm. 
    }
    \label{tables-5to8}
\end{table}

\begin{table}[t!]
    \setlength{\tabcolsep}{3.5pt}
    \centering 
    {\small\begin{tabular}{|rcccccccccc|}
            \hline\hline
            DoF   &   $h$  & $\mathrm{e}(\bu)$  &   $r(\bu)$   &   $\mathrm{e}(p)$  &   $r(p)$  &  $\mathrm{e}(\lambda)$  &   $r(\lambda) $ &  $\mathrm{e}(\theta)$  &  $r(\theta)$  &   it \\
            \hline 
            \multicolumn{11}{|c|}{No stabilisation} \\
            \hline
            88 & 0.707 & 6.72e-01 & $\star$ & 2.57e-01 & $\star$ & 1.98e-01 & $\star$ & 2.71e-02 & $\star$ &  7 \\
           169 & 0.471 & 3.16e-01 & 1.86 & 7.16e-02 & 3.15 & 1.20e-01 & 1.24 & 1.11e-02 & 2.21 &  7\\
           409 & 0.283 & 1.19e-01 & 1.91 & 1.39e-02 & 3.20 & 1.30e-01 & -0.16 & 3.80e-03 & 2.09 &  7\\
           1201 & 0.157 & 3.80e-02 & 1.94 & 2.58e-03 & 2.87 & 9.67e-02 & 0.50 & 1.15e-03 & 2.04 &  7\\
           4033 & 0.083 & 1.10e-02 & 1.95 & 5.42e-04 & 2.45 & 6.38e-02 & 0.65 & 3.20e-04 & 2.01 &  7\\
           14689 & 0.043 & 3.05e-03 & 1.93 & 1.29e-04 & 2.16 & 4.13e-02 & 0.66 & 8.51e-05 & 2.00 &  7\\
            \hline 
            \multicolumn{11}{|c|}{Stabilisation with $\alpha_0=0.1,\,\delta=-1$} \\
            \hline
            88 & 0.707 & 7.06e-01 & $\star$ & 3.87e-01 & $\star$ & 2.71e-01 & $\star$ & 9.54e-02 & $\star$ &  6\\
            169 & 0.471 & 3.30e-01 & 1.88 & 1.23e-01 & 2.84 & 2.17e-01 & 0.55 & 3.71e-02 & 2.33 &  7\\
            409 & 0.283 & 1.25e-01 & 1.89 & 6.04e-02 & 1.39 & 1.06e-01 & 1.41 & 1.31e-02 & 2.04 &  7\\
            1201 & 0.157 & 3.99e-02 & 1.95 & 2.20e-02 & 1.72 & 3.69e-02 & 1.79 & 4.02e-03 & 2.01 &  7\\
            4033 & 0.083 & 1.13e-02 & 1.98 & 6.53e-03 & 1.91 & 1.09e-02 & 1.92 & 1.12e-03 & 2.00 &  7\\
            14689 & 0.043 & 3.01e-03 & 1.99 & 1.77e-03 & 1.97 & 2.94e-03 & 1.97 & 2.98e-04 & 2.00 &  7\\
            \hline 
            \multicolumn{11}{|c|}{Stabilisation with $\alpha_0=0.1,\,\delta=0$} \\
            \hline
            88 & 0.707 & 6.99e-01 & $\star$ & 3.16e-01 & $\star$ & 6.85e-01 & $\star$ & 8.45e-02 & $\star$ &  6\\
            169 & 0.471 & 3.32e-01 & 1.83 & 1.53e-01 & 1.78 & 3.66e-01 & 1.55 & 3.64e-02 & 2.08 &  7\\
            409 & 0.283 & 1.26e-01 & 1.90 & 6.70e-02 & 1.62 & 1.39e-01 & 1.89 & 1.31e-02 & 2.00 &  7\\
            1201 & 0.157 & 3.99e-02 & 1.95 & 2.29e-02 & 1.83 & 4.27e-02 & 2.01 & 4.02e-03 & 2.01 &  7\\
            4033 & 0.083 & 1.13e-02 & 1.98 & 6.63e-03 & 1.95 & 1.17e-02 & 2.03 & 1.12e-03 & 2.00 &  7\\
            14689 & 0.043 & 3.02e-03 & 1.99 & 1.78e-03 & 1.98 & 3.06e-03 & 2.03 & 2.98e-04 & 2.00 &  7\\
            \hline 
            \multicolumn{11}{|c|}{Stabilisation with $\alpha_0=0.1,\,\delta=1$ } \\
            \hline
            88 & 0.707 & 7.29e-01 & $\star$ & 5.21e-01 & $\star$ & 1.00e+00 & $\star$ & 8.25e-02 & $\star$ &  6\\
            169 & 0.471 & 3.40e-01 & 1.88 & 2.14e-01 & 2.19 & 4.70e-01 & 1.87 & 3.64e-02 & 2.02 &  7\\
            409 & 0.283 & 1.27e-01 & 1.92 & 7.78e-02 & 1.98 & 1.66e-01 & 2.04 & 1.31e-02 & 2.00 &  7\\
            1201 & 0.157 & 4.01e-02 & 1.97 & 2.41e-02 & 1.99 & 4.74e-02 & 2.13 & 4.02e-03 & 2.01 &  7\\
            4033 & 0.083 & 1.13e-02 & 1.99 & 6.75e-03 & 2.00 & 1.24e-02 & 2.11 & 1.13e-03 & 2.00 &  7\\
            14689 & 0.043 & 3.02e-03 & 1.99 & 1.79e-03 & 2.00 & 3.15e-03 & 2.07 & 2.99e-04 & 2.00 &  7\\
            \hline
            \hline
    \end{tabular}}
    \smallskip
    \caption{Example \ref{subsec:lagrange-stabilisation-test}. Error history for a FE family with $\mathbb{P}^2_2-\mathbb{P}_1 - \mathbb{P}_1 - \mathbb{P}_2$ 
        for $\bu_h, p_h,\lambda_h$ and $\theta_h$, respectively. For this case the Lagrange multiplier errors are measured in the $\H^{-1/2}(\Gm)-$norm. }
    \label{tables-9to12}
\end{table}

\begin{table}[t!]
\setlength{\tabcolsep}{3.5pt}
\centering 
{\small\begin{tabular}{|rcccccccccc|}
        \hline\hline
        DoF   &   $h$  & $\mathrm{e}(\bu)$  &   $r(\bu)$   &   $\mathrm{e}(p)$  &   $r(p)$  &  $\mathrm{e}(\lambda)$  &   $r(\lambda) $ &  $\mathrm{e}(\theta)$  &  $r(\theta)$  &   it \\
        \hline 
        \multicolumn{11}{|c|}{No stabilisation} \\
        \hline
       51 & 0.707 & 1.62e+00 & $\star$ & 5.34e-01 & $\star$ & 7.05e-01 & $\star$ & 1.81e-01 & $\star$ &  6\\
   103 & 0.471 & 1.13e+00 & 0.88 & 3.55e-01 & 1.00 & 4.48e-01 & 1.12 & 1.22e-01 & 0.97 &  7\\
   261 & 0.283 & 7.02e-01 & 0.93 & 1.85e-01 & 1.28 & 2.08e-01 & 1.50 & 7.33e-02 & 1.00 &  7\\
   793 & 0.157 & 3.96e-01 & 0.97 & 8.55e-02 & 1.32 & 7.36e-02 & 1.77 & 4.04e-02 & 1.01 &  7\\
  2721 & 0.083 & 2.11e-01 & 0.99 & 4.02e-02 & 1.19 & 2.19e-02 & 1.90 & 2.13e-02 & 1.01 &  7\\
 10033 & 0.043 & 1.09e-01 & 1.00 & 1.98e-02 & 1.07 & 5.99e-03 & 1.96 & 1.10e-02 & 1.00 &  7\\
        \hline
        \hline
\end{tabular}}
\smallskip
\caption{Example 
\ref{subsec:lagrange-stabilisation-test}.
Error history for a FE family with $\mathbb{CR}_1-\mathbb{P}_1 - \mathbb{P}_0 - \mathbb{P}_1$ 
    for $\bu_h, p_h,\lambda_h$ and $\theta_h$, respectively. For this case the Lagrange multiplier errors are measured in the $\H^{-1/2}(\Gm)-$norm. In this case, we take $\delta= 1$.
}
\label{table-CRstabilised}
\end{table}

	\subsection{Filtration with osmotic effects}
	\label{sec:simulation2d}
	Let us consider a membrane channel \textit{unit} whose length is defined by a subsection of the channel that allows a fully developed flow \cite{luo2020hybrid}. The channel length of the channel is given by  $L=1.5\cdot 10^{-2}$\,m, whereas the physical parameters are given below \cite{bernales2017prandtl,nayar2016thermophysical}:
	$$
	\begin{aligned}
		&\kappa=4955.144\,J/\mathrm{mol}, \quad A_0=1.189\cdot10^{-11} \mathrm{m}\,\mathrm{Pa}^{-1}\mathrm{s}^{-1},\quad \mu_0=8.9\cdot10^{-4}\mathrm{Kg}\,\mathrm{m}^{-1}\mathrm{s}^{-1},\\
		&\rho_0=1027.2\,\mathrm{Kg\,m}^{-3},\quad D_0=1.5\cdot10^{-9} \mathrm{m}^2\,\mathrm{s}^{-1},\quad \Delta P = 4053000\,\mathrm{Pa}.
	\end{aligned}
	$$
	With respect to the boundary conditions on the inlet, we consider the following:
	$$
	u_0= 1.29\times \cdot10^{-1}\mathrm{m\,s}^{-1}, \quad 
		\theta_{\mathrm{in}}= 600 \text{ mol\,m}^{-3}, \quad 
		 \bu\cdot\bn = g(\theta)=A_0(\Delta P - \kappa\theta) \qquad \text{on } \Gamma_{\mathrm{m}}.
	$$
	The runs in this example are done with a second-order H(div)-conforming discretisation (taking $k=1$) and we consider two scenarios: First a channel with a membrane at $\Gm$, while the wall conditions are kept at $\Gi$. The inlet velocity field is given by
	$$
	\bu\cdot\bn=6u_{0}(y+\widetilde{d})(\widetilde{d}-y)/{\widetilde{d}}^2,
	$$
	where $\widetilde{d}=d/2.$ 
	The second scenario consists of a channel with membranes, where $\Gw=\Gm$ is assumed. In this case, we study the behaviour of the salt profile at the boundary and compare the results at $\Gi$ with a Berman flow. To this end, we take the inlet condition as
	$$
	\bu\cdot\bn=\left(u_0 - v_w\frac{2x}{d}\right)\left[\frac{3}{2}(1-\lambda^2) \right]\left[1 -\frac{\mathrm{Re}}{420}\left( 2 - 7\lambda^2 - 7\lambda^4\right) \right],
	$$
	where $Re=\frac{v_w (d/2)}{\mu_0/\rho_0}$,  $v_w=A_0(\Delta P - \kappa \theta_{\mathrm{in}})$,  $\lambda=2y/d$.
 \begin{figure}[t!]
 \begin{center}
     \raisebox{-4mm}{$\bu_{2,h}$} \\
     \includegraphics[width=.96\textwidth]{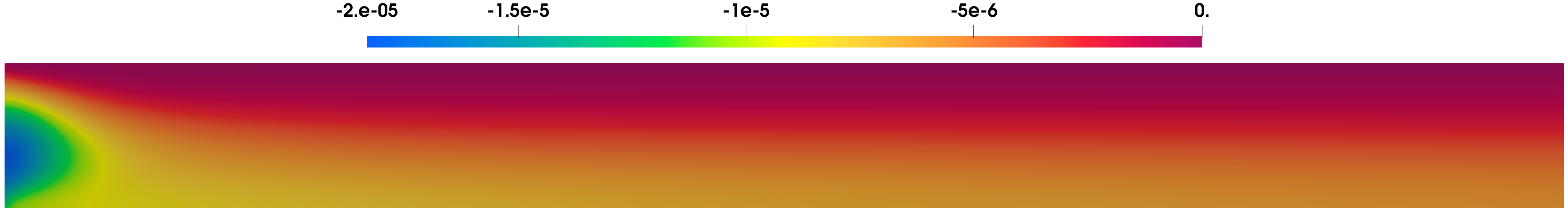}\\
     \includegraphics[width=.96\textwidth]{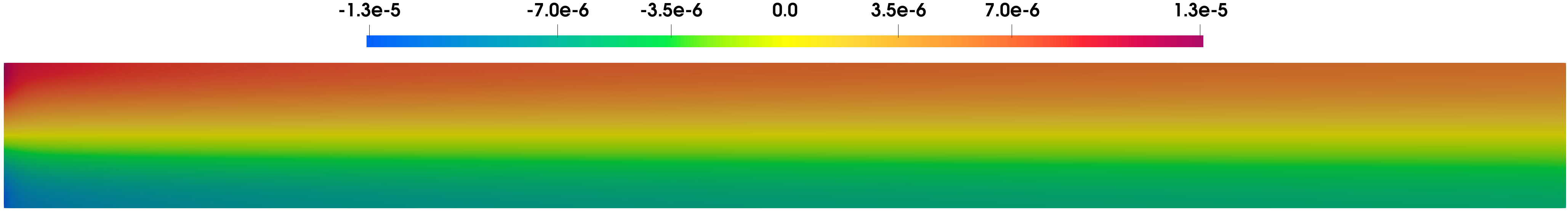}\\[2ex]
      \raisebox{2mm}{$\theta_h$} \quad \includegraphics[width=.5\textwidth]{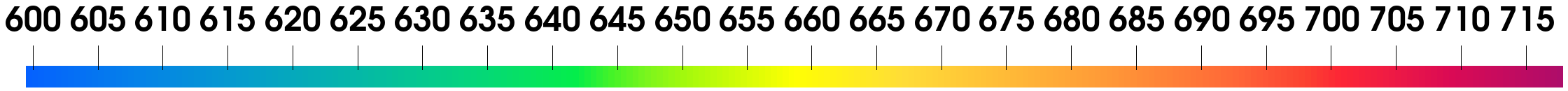}\\
      \includegraphics[width=.96\textwidth]{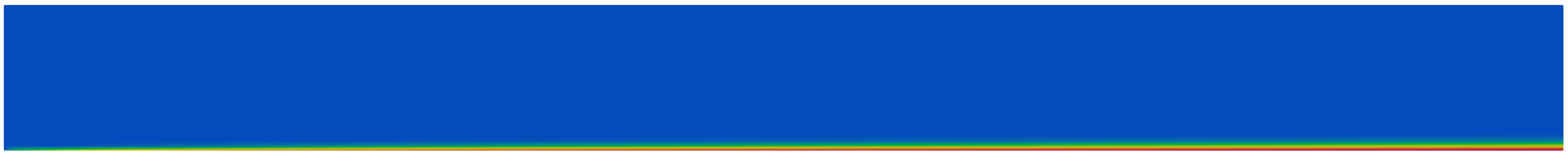}\\
      \includegraphics[width=.96\textwidth]{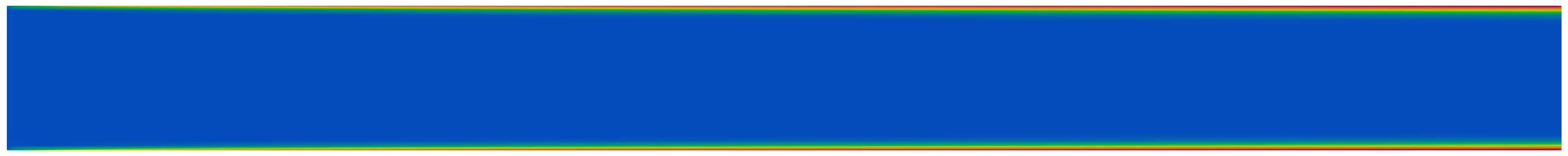}
 \end{center}
 \caption{Example \ref{sec:simulation2d}. Scenario 1 (first and third panels) and Scenario 2 (second and bottom panels).  Scaled representation of the computed velocity component $\bu_{h,1}$ and concentration profile in a channel with membrane at $\Gm$ (scenario 1) and $\Gw=\Gm$ (scenario 2).}
 \label{fig:channel-nospacer-velprescon-and-wall}
	\end{figure}

	To capture the velocity behaviour at the inlet, as well as the maximum permeate velocity, a free tangential stress $(\bsig\bn)\cdot\bt=0$ is imposed at $\Gi$. Similar results for the dual membrane channel are obtained if we consider $\bu_2$ as the corresponding velocity component on a Berman flow.

	The results for the first and second scenarios are depicted in Figure \ref{fig:channel-nospacer-velprescon-and-wall}. For the first case we can see that the velocity  near the membrane is affected by the porosity and the transmembrane pressure. Due to the minimal amount of salt compared to the rest of the membrane, at the inlet we observe a high flux in the normal direction to the membrane.  
 In Figure \ref{fig:channel-comparacion-vel-con-pres} we compare the performance of both channels. The concentration profile and permeate velocity are highly dominated by the transmembrane pressure, irrespective of the choice of inlet profiles. On the other hand, we observe that the concentration profile at the membrane increases as we approach the end of the channel, consequently decreasing the permeate velocity. This is accompanied by a linear pressure drop, which behaves similarly for both scenarios.	
	\begin{figure}[t!]
		\centering
		\includegraphics[width=0.32\textwidth]{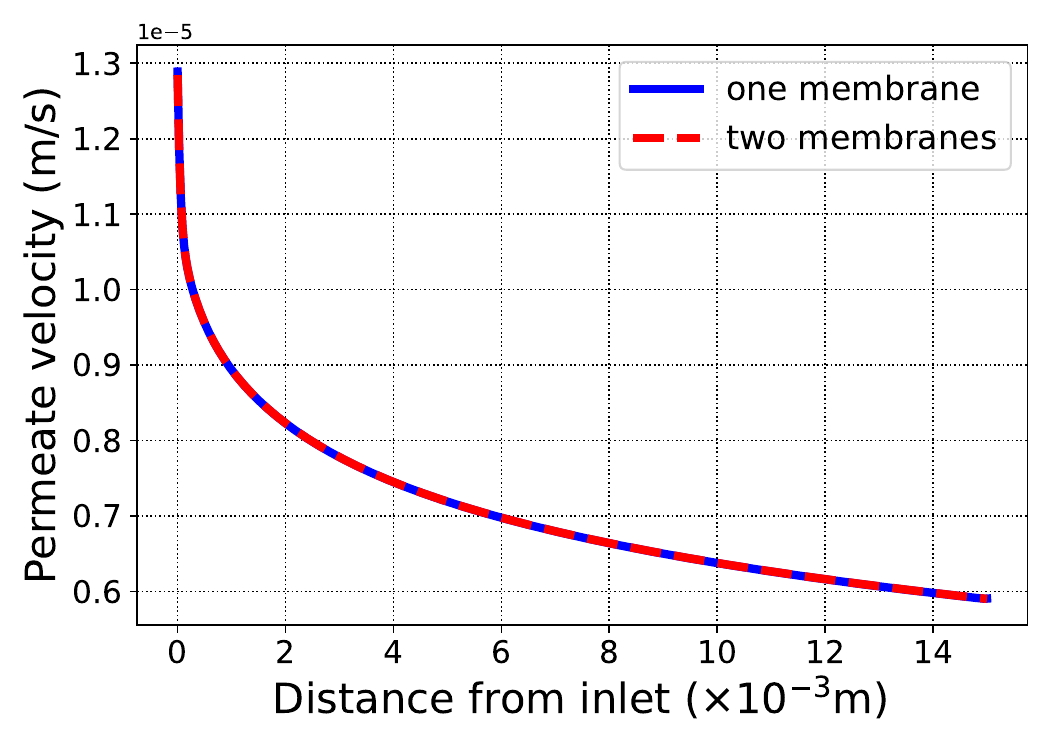}
		\includegraphics[width=0.32\textwidth]{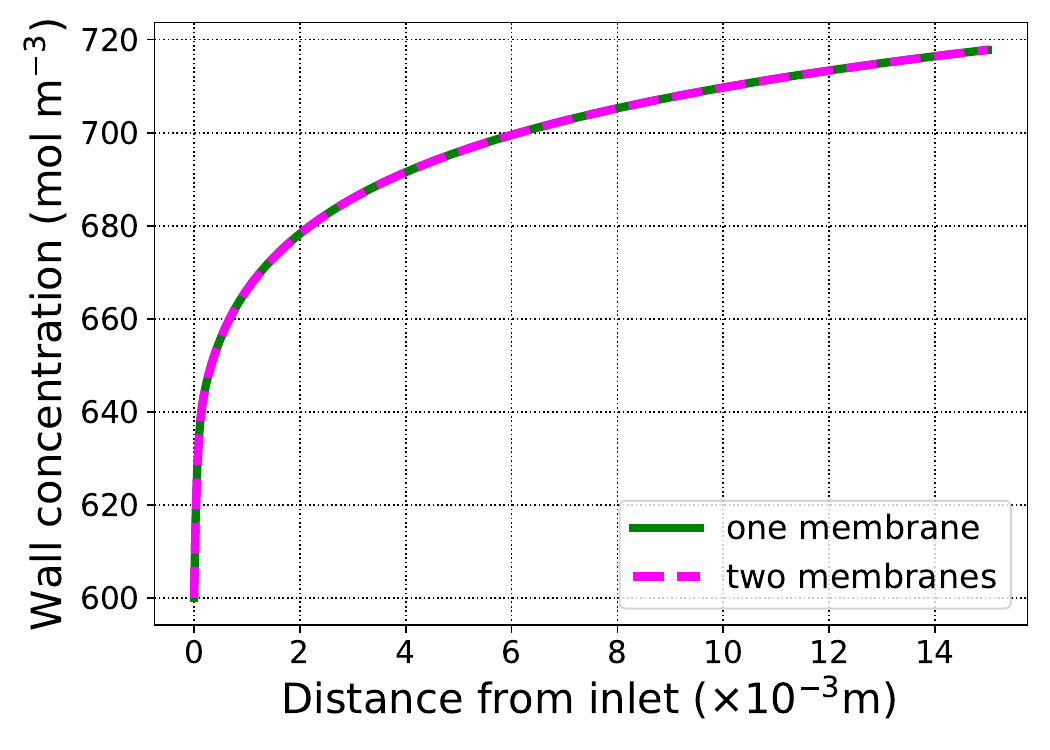}        \includegraphics[width=0.32\textwidth]{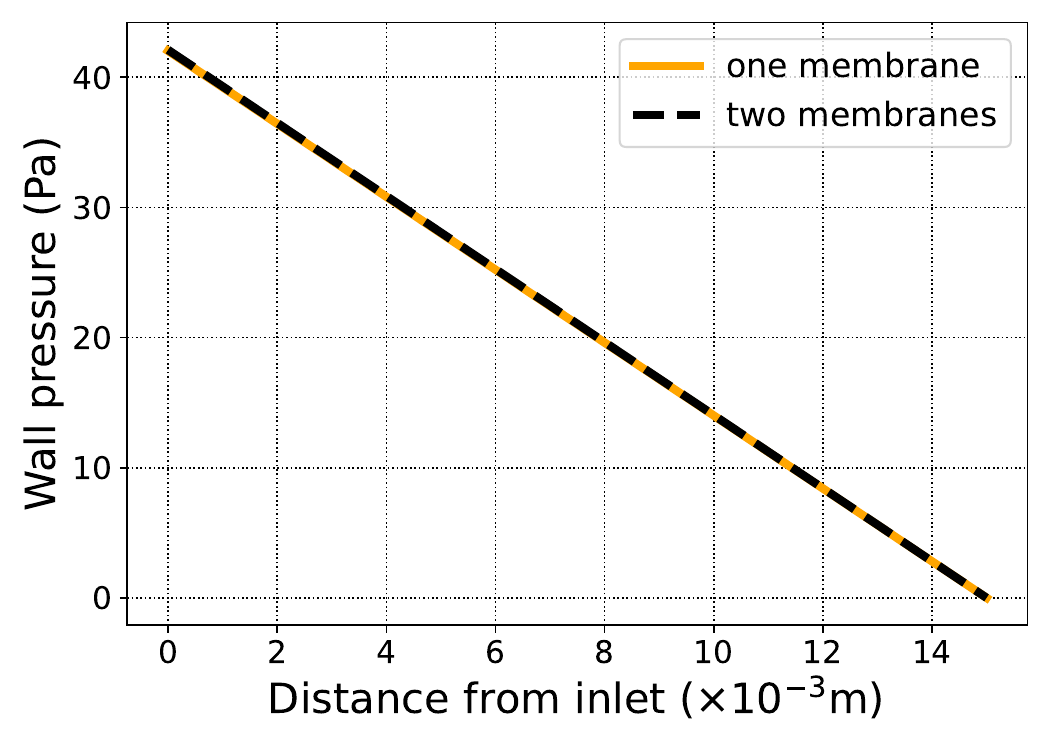}
		\caption{Example \ref{sec:simulation2d}. Comparison along $\Gm$ between permeate velocities, concentration profiles and pressures between the two channel scenarios. }
		\label{fig:channel-comparacion-vel-con-pres}
	\end{figure}

	\subsection{A channel with a spacer}
    \label{test-spacer}
We now study the behaviour of the model when an obstacle, serving a a spacer located in the middle of the channel, is considered.  The spacer corresponds to the cross section of a cylinder, i.e., a circle, with diameter $3.6\cdot10^{-4}$m and tangent to the membrane. The boundary conditions for the spacer are the same as $\Gw$. The velocities to be tested in this experiment are $u_0=5\cdot 10^{-2}$m$/$s and $u_0=1.29\cdot 10^{-1}$m$/$s, and the effect of the salt concentration boundary layer along the membrane is studied.
    
    In Figure \ref{fig:vel_and_concentration_comparison} we observe  velocity and concentration profiles when different inlet velocities are considered. The flow exhibits recirculating zones caused by the spacer,  inducing an accumulation of salt near the spacer. Moreover, near the tangent point we observe the maximum salt concentration. This is described in Figure \ref{fig:spacer_channel-comparacion-vel-con-pres}, where we observe that the high velocity profile yields a lower concentration of salt along the membrane despite the higher recirculating zone. Also, the gauge pressure drop observed for the high velocity profile is more pronounced around the spacer boundary point, as expected.
\begin{figure}[t!]
\begin{center}
 \raisebox{-2mm}{$|\bu_h|$} \\
 \includegraphics[width=0.495\textwidth]{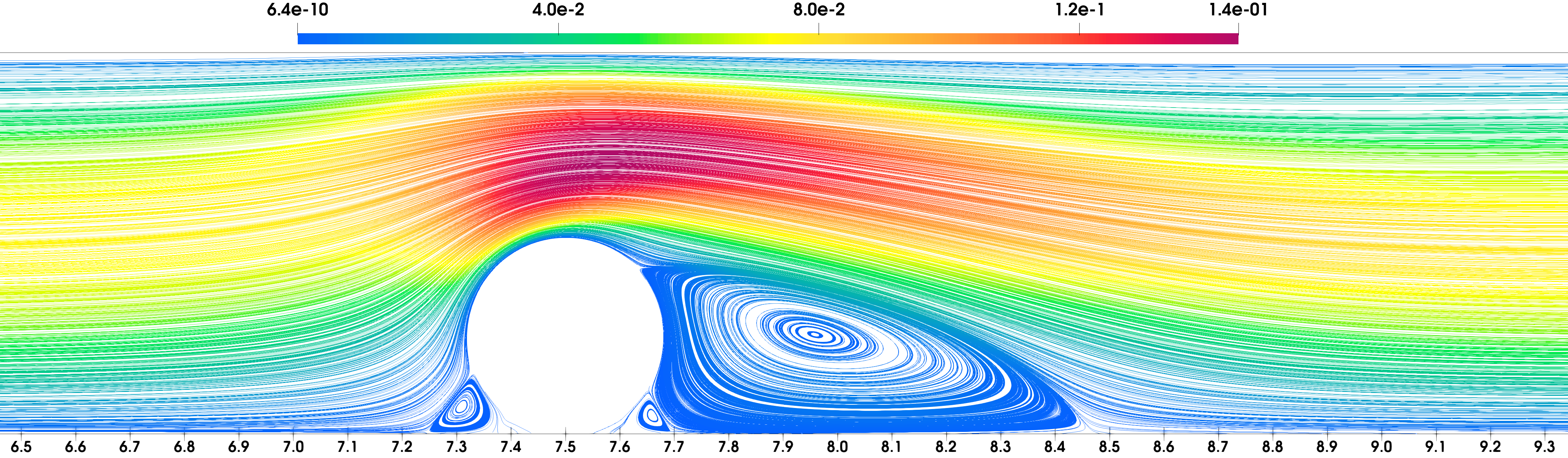}
    \includegraphics[width=0.495\textwidth]{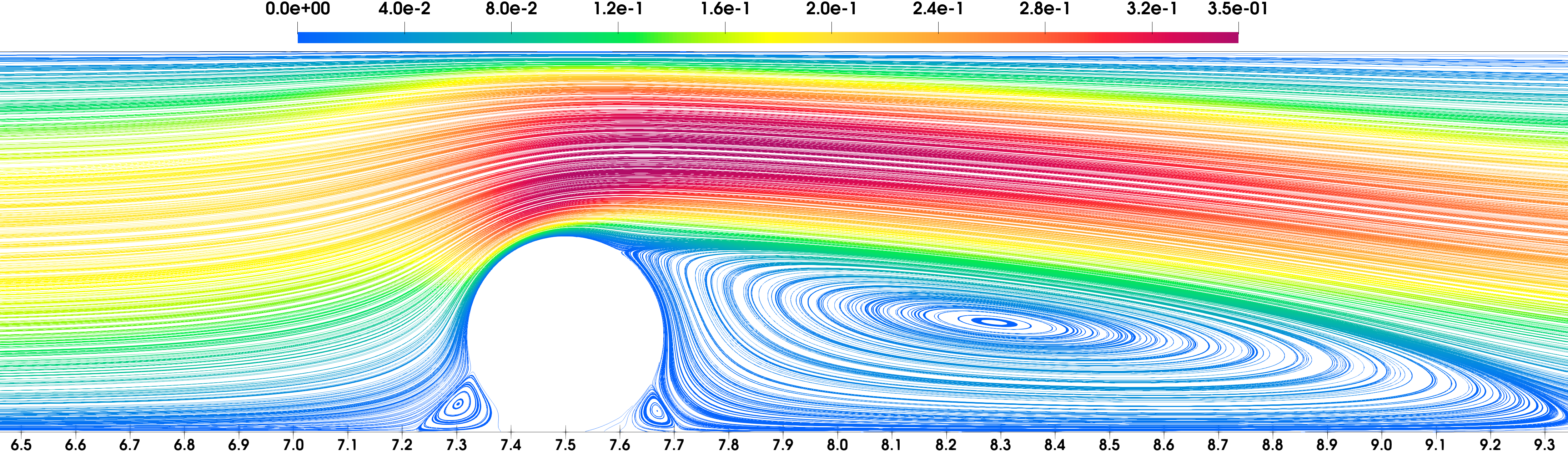} \\[1ex]
  \raisebox{1mm}{$\theta_h$} \quad  \includegraphics[width=0.6\textwidth]{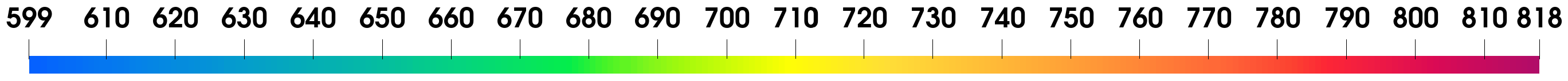}\\
    \includegraphics[width=0.495\textwidth]{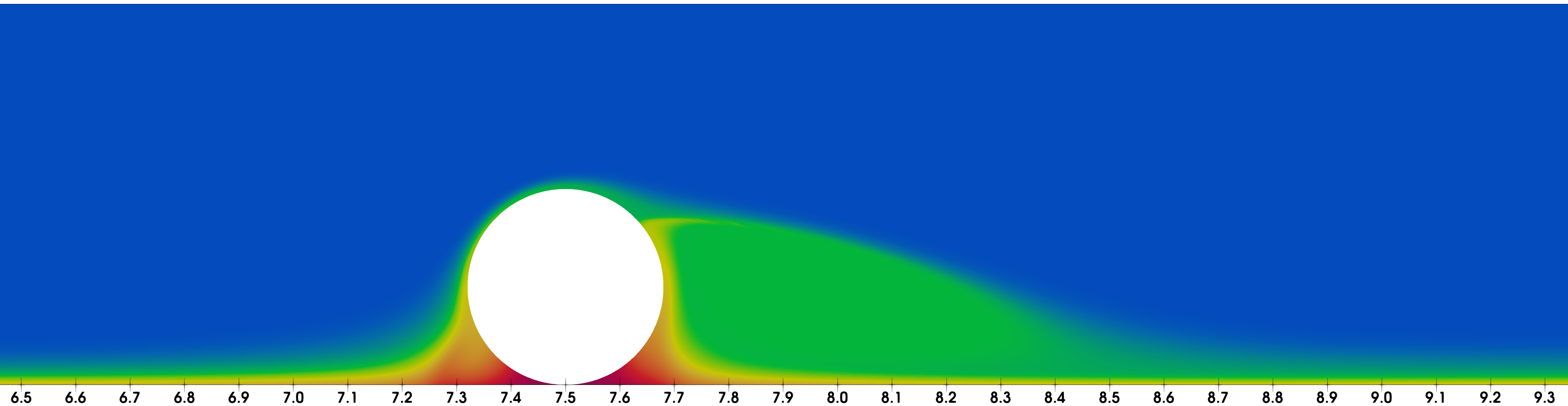}
    \includegraphics[width=0.495\textwidth]{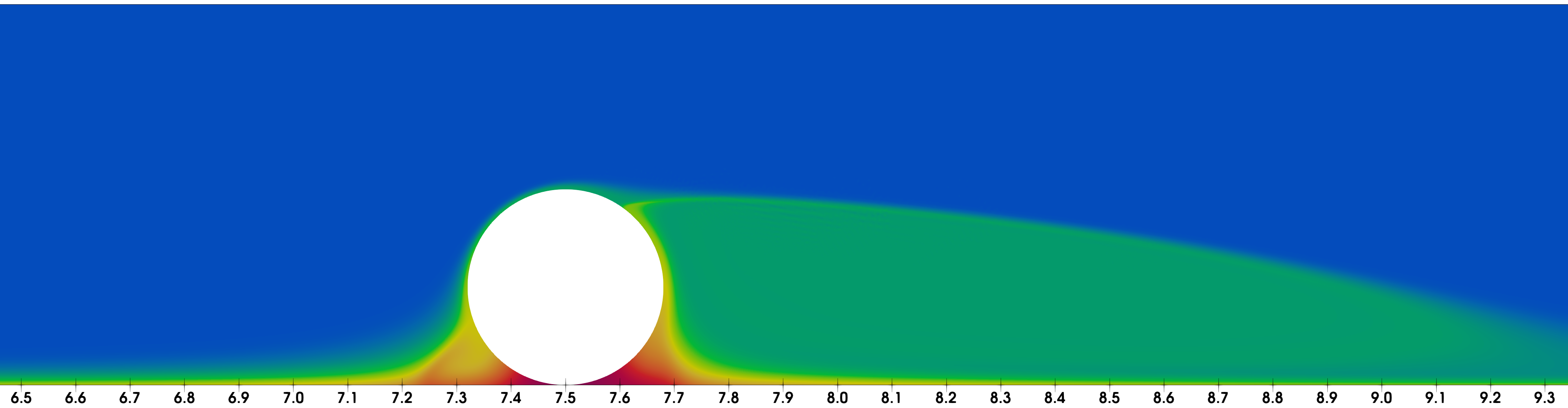}
\end{center}
\caption{Velocity streamlines (top panels) and concentration profiles (bottom panels) around the spacer in a cavity-type configuration and inlet velocities $u_0=5.0\cdot 10^{-2}$m$/$s (left) 
   and  $u_0=1.29\cdot 10^{-1}$m$/$s. The bottom numbers indicate distance from inlet (in $\times 10^{-3}$m).}
    \label{fig:vel_and_concentration_comparison}
    \end{figure}
    \begin{figure}[t!]
		\centering
		\includegraphics[width=0.32\textwidth]{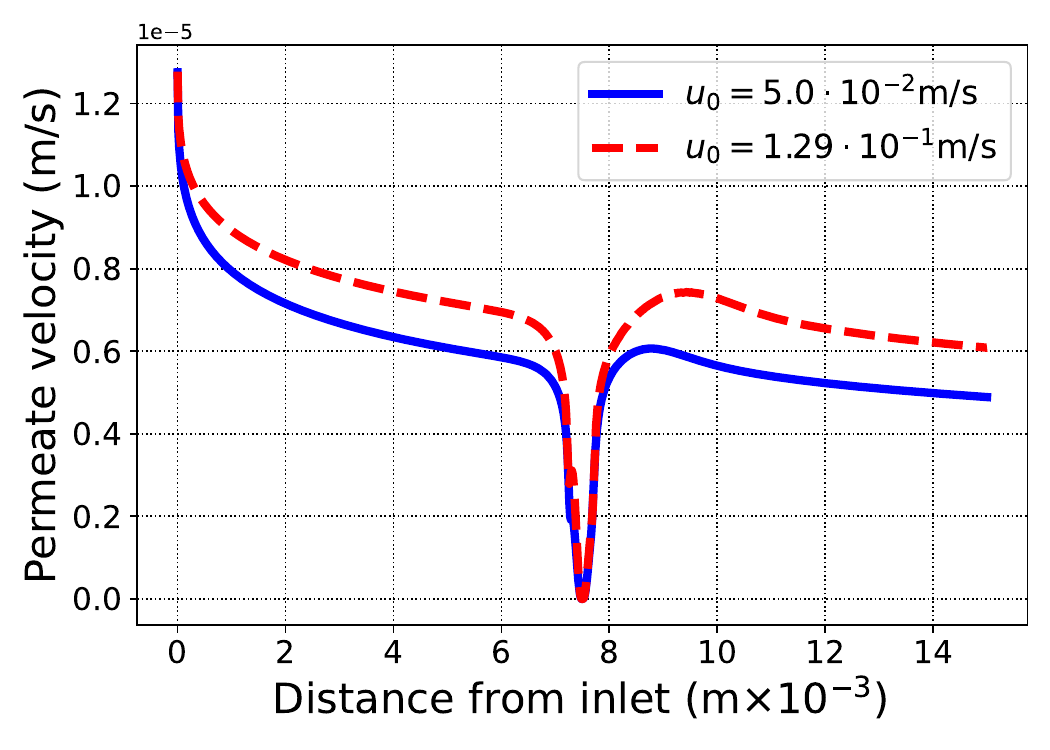}
		\includegraphics[width=0.32\textwidth]{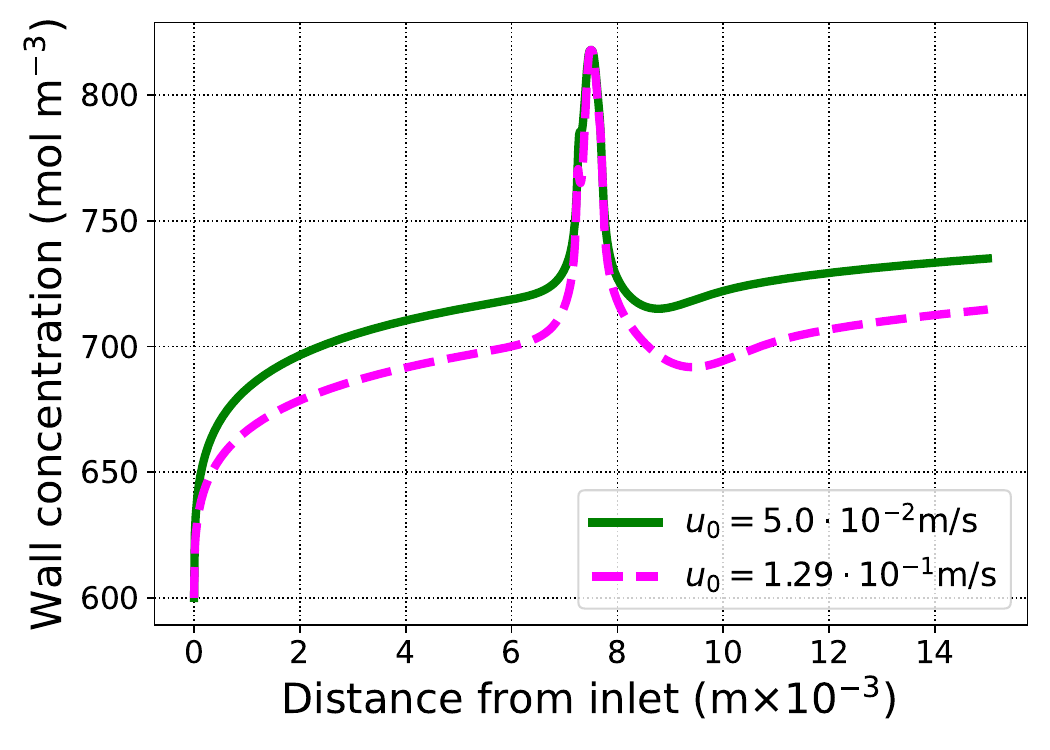}        \includegraphics[width=0.32\textwidth]{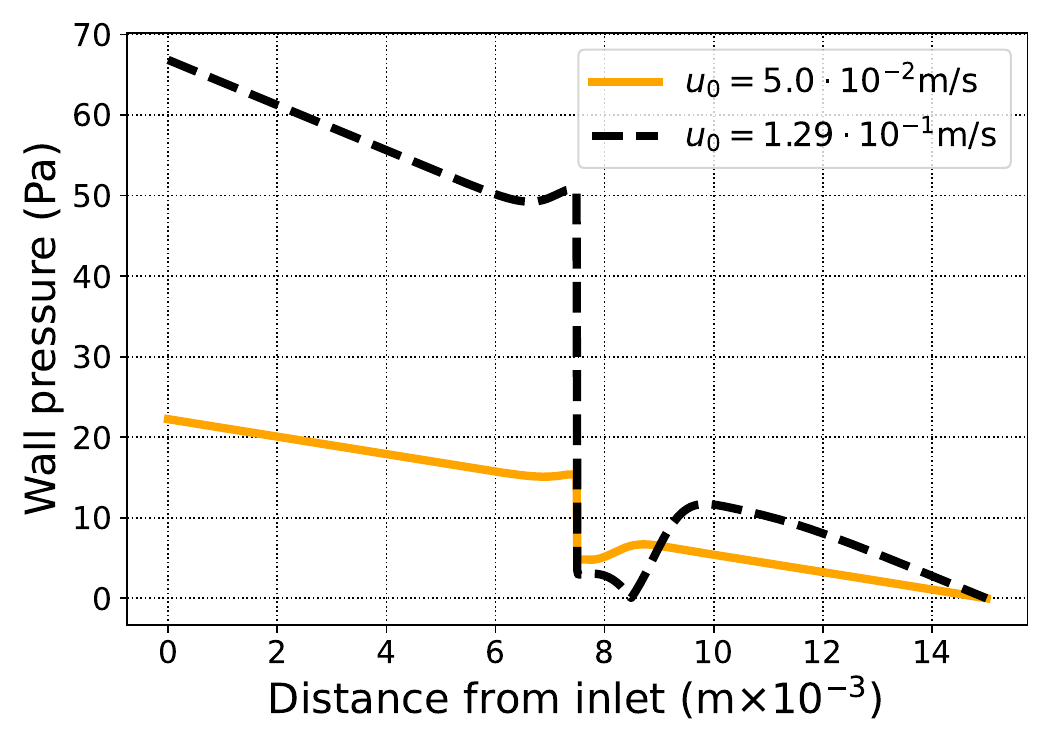}
		\caption{Example \ref{test-spacer}. Comparison along $\Gm$ between permeate velocities, concentration profiles and pressures between the two velocities scenarios in the channel with cavity-type spacer configurations. }
		\label{fig:spacer_channel-comparacion-vel-con-pres}
	\end{figure}

\subsection{Benchmark on a 3D geometry}\label{subsec:benchmark}
We conclude with a simple benchmark test to compare the effectiveness of the different numerical schemes discussed in the paper. We focus on the unit cube $\Omega:=(0,1)^3$ partitioned into  $18458$ regular tetrahedra and having $610$ facets on $\Gm$ (similar in size to the fourth mesh level on the 3D experiments in Section \ref{test-nonconforming}).

We measure the CPU time needed for matrix assembly (denoted \emph{Assembly}), the percentage of non-zero entries in the matrix (\emph{Non-zero pct}) and the CPU time to solve the system (\emph{Solve}). This information is provided by the Python library \texttt{Time} in conjunction with the FEniCS \texttt{DEBUG log}. In the case of the PETSc library, we employ the PETSc Krylov solver for handling the linear system, coupled with the direct solver MUMPS. These metrics are captured during the execution of the first Newton iteration.

For the test, we consider the div-free schemes discussed in Section \ref{sec:div-conforming} (which we denote by BDM-divfree and CR-divfree), together with the Lagrange multiplier stabilised schemes based on Taylor--Hood elements denoted by TH-stabilised-P$k$ (or TH-nonstabilised-P$k$ for the $\alpha_0=0$ variation), $k=0,1$. The $k$ represents the degree of the polynomial order for the Lagrange multiplier space. Similarly, we denote by MINI-stabilised-P$k$ (resp. MINI-nonstabilised-P$k$) the variant where the MINI-element family is used. Finally, the results coming from the first Newton iteration on the scheme \eqref{eq:CR-cross-flow-fv1-discrete-conforming} are denoted by CR-nonstabilised.

The results are portrayed in Table \ref{tabla:benchmark3D}. We observe that the divergence-free schemes have a smaller assembly time, but the extra entries (due to the facet jumps and averages) yields a solving time of 16s approximately. Perhaps surprisingly,  CR-nonstabilised has the best assembly time of all the test cases, and the solve time is half of its div-free variant. On the other hand, it is important to note the performance hit when considering the stabilisation of \eqref{eq:cross-flow-fv1-discrete-conforming-stabilised}. The solving time is much higher for the non-stabilised variations (except CR), even failing to converge in the case of MINI-nonstabilised-P1. A clear performance advantage of the MINI family is observed, which is expected because they are the cheapest inf-sup stable FE. The non-zero percentage was kept below $0.1\%$ in all cases, but the TH variants show the lowest sparsity pattern. Also,  the div-free and CR-nonstabilised schemes are the ones with the lowest amount of non-zero entries in the global matrix. The results suggest that BDM-divfree, CR-divfree and CR-nonstabilised are preferred if $\nabla\cdot\bu_h=0$ is required, while MINI schemes should be used otherwise. We also tested on the Newton  iterations required to solve \eqref{eq:CR-cross-flow-fv1-discrete-conforming} as a function of $\delta$, where we observed no significant difference. Here we took $\alpha_0=0.1$, similarly to Section \ref{subsec:lagrange-stabilisation-test}.

\begin{table}[!h]\centering
	{
		\caption{Example \ref{subsec:benchmark}. Benchmark of the different methods for solving the coupled flow-transport problem in $\O:=(0,1)^3$ with $18458$ tetrahedra and $610$ facets on $\Gm$. The symbol -- represents a failed Newton iteration.}
\begin{tabular}{lrccc}
\hline
\hline
	Scheme & DoF & \emph{Assembly} (s) & \emph{Solve} (s) & \emph{Non-zero pct} (\%)  \\ \midrule
	BDM-divfree             & 139391          &   3.05455    &  16.6812  &     0.04734          \\
	CR-divfree              & 139391          &   2.10487    &  16.7165  &     0.04734       \\
    CR-nonstabilised        & 139391          &   0.87892    &  7.25941  &      0.01856      \\
	TH-stabilised-P0        & 118431         &   5.00461    &  9.84011  &      0.08100     \\
    TH-stabilised-P1        & 119651         &   5.02505    &  10.2191  &     0.08005      \\
	TH-nonstabilised-P0     & 118431          &   3.82103    &  18.4420  &    0.08027     \\
    TH-nonstabilised-P1     & 119651          &   3.80423    &  25.4627  &    0.07926      \\
    MINI-stabilised-P0       & 76349         &   3.18728    &  2.31272  &      0.05787   \\
    MINI-stabilised-P1       & 77569         &   3.13314    &  2.24265  &     0.05700     \\
	MINI-nonstabilised-P0   & 76349          &   2.63304    &  2.49390  &    0.05614     \\
    MINI-nonstabilised-P1   & 77569          &   2.66941    &  --  &    0.05512        \\
    \hline
    \hline
\end{tabular}}
\label{tabla:benchmark3D}
\end{table}

\section*{CRediT authorship contribution statement}
A. Khan, D. Mora, R. Ruíz-Baier, J. Vellojin: Conceptualization,
Methodology, Writing-original draft, Review $\&$ Editing.

\section*{Data availability statement}
The data that support the findings of this research are available from the corresponding author upon reasonable request.

\section*{Declaration of competing interest}
The authors declare that they have no known competing financial interests or personal relationships that could have appeared to influence the work reported in this paper.
	\bibliographystyle{siam}
	\bibliography{biblio}


\end{document}